\documentclass{amsart}

\usepackage[T1]{fontenc}
\usepackage{enumerate, amsmath, amsfonts, amssymb, amsthm, mathrsfs, wasysym, graphics, graphicx, xcolor, url, hyperref, hypcap, shuffle, xargs, multicol, overpic, pdflscape, multirow, hvfloat, minibox, accents, array, multido, xifthen, a4wide, ae, aecompl, blkarray, pifont, mathtools, etoolbox, cancel}
\usepackage[linesnumbered,ruled,lined]{algorithm2e}
\definecolor{darkblue}{rgb}{0.0,0.0,0.7}
\hypersetup{colorlinks=true, breaklinks, citecolor=darkblue, linkcolor=darkblue}
\usepackage[all]{xy}
\usepackage[bottom]{footmisc}
\usepackage{tikz}
\usetikzlibrary{trees, decorations, decorations.markings, shapes, arrows, matrix, calc, fit, intersections, patterns, angles, automata, positioning}
\usepackage[noabbrev,capitalise]{cleveref}


\newtheorem{theorem}{Theorem}
\newtheorem{proposition}[theorem]{Proposition}
\newtheorem{lemma}[theorem]{Lemma}
\newtheorem{corollary}[theorem]{Corollary}

\newtheorem{definition}[theorem]{Definition}
\newtheorem{example}[theorem]{Example}
\newtheorem{remark}[theorem]{Remark}



\newcommand{\fS}{\mathfrak{S}} 

\newcommand{\set}[2]{\left\{ #1 \;\middle|\; #2 \right\}} 
\newcommand{\ssm}{\smallsetminus} 
\newcommand{\symdif}{\,\triangle\,} 
\newcommand{\eqdef}{\mbox{\,\raisebox{0.2ex}{\scriptsize\ensuremath{\mathrm:}}\ensuremath{=}\,}} 
\newcommand{\defn}[1]{\textsl{\color{darkblue} #1}} 
\newcommand{\ie}{\textit{i.e.}~} 
\newcommand{\apriori}{\textit{a priori}} 
\DeclareMathOperator{\inv}{inv} 
\DeclareMathOperator{\ninv}{ninv} 
\DeclareMathOperator{\moveU}{moveU} 
\DeclareMathOperator{\moveD}{moveD}

\newcommand{\automatonA}{\mathbb{A}} 
\newcommand{\automatonP}{\mathbb{P}} 
\newcommand{\automatonU}{\mathbb{U}} 
\newcommand{\automatonD}{\mathbb{D}} 
\newcommand{\length}{\ell} 
\newcommand{\lexmin}{\mathcal{R}} 

\usepackage{todonotes}


\title{Permutree sorting}

\thanks{VP was supported by the French ANR (grants CAPPS~17\,CE40\,0018 and CHARMS~19\,CE40\,0017)}

\author{Vincent Pilaud}
\address[VPi]{CNRS \& LIX, \'Ecole Polytechnique, Palaiseau}
\email{vincent.pilaud@lix.polytechnique.fr}
\urladdr{\url{http://www.lix.polytechnique.fr/~pilaud/}}

\author{Viviane Pons}
\address[VPo]{Universit\'e Paris-Saclay, CNRS, Laboratoire de recherche en informatique, Orsay, France.}
\email{viviane.pons@lri.fr}
\urladdr{\url{https://www.lri.fr/~pons/}}

\author{Daniel Tamayo Jim\'enez}
\address[DTJ]{Universit\'e Paris-Saclay, CNRS, Laboratoire de recherche en informatique, Orsay, France.}
\email{\url{daniel.tamayo-jimenez@lri.fr}}


\begin{document}

\maketitle

\begin{abstract}
Generalizing stack sorting and $c$-sorting for permutations, we define the permutree sorting algorithm.
Given two disjoint subsets~$U$ and~$D$ of~$\{2, \dots, n-1\}$, the $(U,D)$-permutree sorting tries to sort the permutation~$\pi \in \fS_n$ and fails if and only if there are~$1 \le i < j < k \le n$ such that~$\pi$ contains the subword~$jki$ if~$j \in U$ and~$kij$ if~$j \in D$.
This algorithm is seen as a way to explore an automaton which either rejects all reduced words of~$\pi$, or accepts those reduced words for~$\pi$ whose prefixes are all $(U,D)$-permutree sortable.
\end{abstract}


\section{Introduction}

The weak order is a classical lattice on the symmetric group where permutations are ordered by inclusion of inversion sets.
Generalizing the classical Tamari lattice~\cite{Tamari}, N.~Reading studied the lattice quotients of the weak order~\cite{Reading-latticeCongruences}, in particular the Cambrian lattices~\cite{Reading-cambrianLattices,Reading-CoxeterSortable}.
Generalizing and interpolating between the weak order, the Cambrian lattices and the boolean lattice, we introduced permutree lattices in~\cite{PilaudPons-permutrees} based on the combinatorics of certain trees called permutrees.
These permutree lattices have strong combinatorial, geometric and algebraic properties distinguishing them among all lattice quotients of the weak order: for instance, they are the only lattice quotients of the weak order which can be realized as removahedra~\cite{AlbertinPilaudRitter-Removahedra} (\ie polytopes obtained by removing inequalities in the facet description of the classical permutahedra, like the classical associahedra of~\cite{Loday,HohlwegLange}), and they define combinatorial Hopf algebras~\cite{PilaudPons-permutrees} analoguous to the Hopf algebras of C.~Malvenuto and C.~Reutenauer on permutations~\cite{MalvenutoReutenauer} and of J.-L.~Loday and M.~Ronco on binary trees~\cite{LodayRonco}.

Unlike Cambrian lattices which are well-understood for arbitrary finite Coxeter groups~\cite{Reading-cambrianLattices,Reading-CoxeterSortable} with their connections to finite type cluster algebras~\cite{FominZelevinsky-ClusterAlgebrasI,FominZelevinsky-ClusterAlgebrasII}, permutree lattices lack a combinatorial description beyond the symmetric group.
To tackle this problem, this paper still focusses on type~$A$ permutree lattices but from the lens of reduced words for permutations (\ie products of simple transpositions of minimal length).
This approach not covered in \cite{PilaudPons-permutrees} is more suitable to generalizations to finite Coxeter groups.

The goal of this paper is to identify reduced words which correspond to the minimal permutations of their permutree congruence classes.
These permutations are generalizations of the \defn{stack-sortable} permutations introduced by D.~Knuth in his textbook~\cite[Sect.~2.2.1]{Knuth-TAOCP1}, which are characterized by the following equivalent conditions for a permutation~$\pi \in \fS_n$:
\begin{enumerate}[(i)]
\item \label{condition:stackSorting}
$\pi$ is sent to the identity by the stack sorting operator~$S$ defined inductively by~$S(\tau n \rho) \eqdef S(\tau) S(\rho) n$.
\item \label{condition:patternStack}
$\pi$ avoids the pattern $231$ (\ie there is no~$p < q < r$ such that~$\pi_r < \pi_p < \pi_q$).
\item \label{condition:minimalLinearExtentionBinaryTree}
$\pi$ is minimal among all linear extensions of a binary tree on $n$ nodes (seen as a poset, where the nodes are labeled in inorder and the edges are oriented towards the leaves).
\item \label{condition:alignedStack}
For $i < j < k$, the inversion set~$\inv(\pi) \eqdef \set{(\pi_p, \pi_q)}{p < q \text{ and } \pi_p > \pi_q}$ of~$\pi$ contains the inversion $(k,j)$ as soon as it contains the inversion~$(k,i)$.
\item \label{condition:sortableStack}
$\pi$ admits a reduced word of the form~$\pi = c_{I_1} \cdots c_{I_p}$ with nested subsets~$I_1 \supseteq \dots \supseteq I_p$, where~$c_{\{i_1 < \dots < i_j\}} \eqdef s_{i_j} \cdots s_{i_1}$ is a product of the simple transpositions~$s_i \eqdef (i \;\; i+1)$.
\end{enumerate}
It follows from~\eqref{condition:minimalLinearExtentionBinaryTree} that these permutations are counted by the Catalan number~$C_n \eqdef \frac{1}{n+1} \binom{2n}{n}$.

In~\cite{Reading-latticeCongruences, Reading-cambrianLattices, Reading-CoxeterSortable}, N.~Reading defined natural counterparts to conditions~\eqref{condition:minimalLinearExtentionBinaryTree}, \eqref{condition:alignedStack}, and~\eqref{condition:sortableStack} above, parametrized by the choice of a Coxeter element~$c$ in a finite Coxeter group~$W$: the minimality in $c$-Cambrian classes, the $c$-alignment, and the $c$-sortability.
(We skip the general definitions of these conditions here as we stick with the combinatorics of the symmetric group.)
In the situation of the symmetric group~$\fS_n$, we can think of a Coxeter element on~$\fS_n$ as an orientation of an $(n-1)$-path, or equivalently as a partition of~$\{2, \dots, n-1\}$ into two subsets~$U$ and~$D$.
The Cambrian analogues of the conditions~\eqref{condition:patternStack}, \eqref{condition:minimalLinearExtentionBinaryTree}, \eqref{condition:alignedStack} and~\eqref{condition:sortableStack} above are the following equivalent conditions for a permutation~$\pi \in \fS_n$:
\begin{enumerate}[(i')]
\addtocounter{enumi}{1}
\item \label{condition:patternCambrian}
For~$i < j < k$, the permutation~$\pi$ does not contain the subword $jki$ if~$j \in U$ and~$kij$ if~$j \in D$.
\item \label{condition:minimalLinearExtentionCambrian}
$\pi$ is minimal among all linear extensions of a $c$-Cambrian tree on $n$ nodes. A $c$-Cambrian tree is an oriented tree on~$[n]$ where node~$j$ has one parent if~$j \notin U$ and two parents if~$j \in U$, and one child if~$j \notin D$ and two children if~$j \in D$, with an additional local condition at each node similar to the binary search tree condition~\cite{ChatelPilaud}.
\item \label{condition:alignedCambrian}
For~$i < j < k$, if~$\inv(\pi)$ contains~$(k,i)$, then it also contains $(k,j)$ if~$j \in U$ and $(j,i)$ if $j \in D$.
\item \label{condition:sortableCambrian}
$\pi$ admits a reduced word of the form~$\pi = c_{I_1} \cdots c_{I_p}$ with nested subsets~$I_1 \supseteq \dots \supseteq I_p$, where~$c_I \eqdef c_{i_1} \cdots c_{i_{|I|}}$ denotes the subword of~$c \eqdef c_1 \cdots c_{n-1}$ indexed by~$I \eqdef \{i_1 < \dots < i_j\}$.
\end{enumerate}
It turns out that for any Coxeter element~$c$, the permutations satisfying these conditions are still counted by the Catalan number~$C_n$.

The generalization to permutrees consists of taking two subsets~$U$ and~$D$ of~$\{2, \dots, n-1\}$ that are not anymore required to form a partition of~$\{2, \dots, n-1\}$ (they may intersect and may not cover all the set).
It was proved in~\cite{PilaudPons-permutrees, ChatelPilaudPons} that the conditions~(\ref{condition:patternCambrian}'), (\ref{condition:minimalLinearExtentionCambrian}'), and~(\ref{condition:alignedCambrian}') are still equivalent for a permutation~$\pi \in \fS_n$.
We call a permutation \defn{$(U,D)$-permutree minimal} when it satisfies these conditions, \ie when it is minimal (minimal number of inversions) in its $(U,D)$-permutree class.
The number of $(U,D)$-permutree minimal permutations is called $(U,D)$-factorial-Catalan number and admits recursive formulae interpolating between the formulae for the factorial and for the Catalan number.

The objective of this paper is to discuss characterizations of permutree minimal permutations in terms of their reduced words.
In other words, we aim at a condition playing the role of condition~(\ref{condition:sortableCambrian}') and equivalent to conditions~(\ref{condition:patternCambrian}'), (\ref{condition:minimalLinearExtentionCambrian}'), and~(\ref{condition:alignedCambrian}') for arbitrary subsets~$U$ and~$D$ of~$\{2, \dots, n-1\}$.
We believe that understanding this characterization in type~$A$ could lead to a general characterization for all finite Coxeter groups since the condition~(\ref{condition:sortableCambrian}') generalizes to all types for Cambrian lattices.

We first focus on the case where~$U = \varnothing$ and~$D = \{j\}$ for some~${j \in \{2, \dots, n-1\}}$, or the opposite.
To characterize the permutree minimal permutations in terms of their reduced words in that situation, we use two automata~$\automatonU(j)$ and $\automatonD(j)$ defined inductively as shown in \cref{fig:automataRecursive}.
The induction stops at~$\automatonU(n)$ and~$\automatonD(1)$, which are defined by deleting the transitions~$s_n$ and~$s_0$ respectively in \cref{fig:automataRecursive}.
\cref{fig:automataComplete} presents the complete automaton $\automatonU(j)$ after all recursion is done, and \cref{fig:TreePartialOrientations} shows the automata~$\automatonU(2)$, $\automatonD(2)$, $\automatonU(3)$, and~$\automatonD(3)$.
In all these pictures the initial state is marked with ``start'', the accepting states are doubly circled, all transitions are labeled with simple transpositions~$s_i$ for~$i \in [n-1]$, and all missing transitions are loops (we assume the reader familiar with basic automata theory, see for instance~\cite{HopcroftUllman}).
Our main tool is the following statement, proved in \cref{sec:proofPatternAvoidance}.

\begin{theorem}\label{thm:patternAvoidance}
Fix $j \in \{2, \dots, n-1\}$.
The following conditions are equivalent for~$\pi \in \fS_n$:
\begin{itemize}
	\item $\pi$ admits a reduced word accepted by the automaton~$\automatonU(j)$ (resp.~$\automatonD(j)$),
	\item $\pi$ contains no subword $jki$ (resp.~$kij$) with~$i < j < k$.
\end{itemize}
\end{theorem}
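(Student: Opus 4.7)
My approach is organized around a tracking lemma: a reduced word $w = s_{a_1} \cdots s_{a_\ell}$ is accepted by $\automatonU(j)$ if and only if, for every $k \in \{0, 1, \ldots, \ell\}$, the partial product $s_{a_1} \cdots s_{a_k}$ avoids the subword $jki$ with $i < j < k$. Once this is established, the forward direction of \cref{thm:patternAvoidance} is immediate by specializing to $k = \ell$, while the backward direction reduces to producing, for every $\pi$ avoiding $jki$, a reduced expression of $\pi$ whose every intermediate product also avoids $jki$.

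I would prove the tracking lemma by induction on $n$, exploiting the recursive structure of $\automatonU(j)$ in \cref{fig:automataRecursive}. The idea is that the states of $\automatonU(j)$ precisely record the information about the partial product needed to detect a pattern violation: a transition labeled $s_i$ leads out of an accepting branch exactly when applying the adjacent swap $s_i$ to the current partial product introduces the subword $jki$. The induction uses the fact that $\automatonU(j)$ for $\fS_n$ is built from $\automatonU(j)$ for $\fS_{n-1}$ by a local modification near the newly added transition $s_{n-1}$, so the verification then amounts to a careful case analysis of the new transitions and of the effect of right multiplication by $s_{n-1}$ on the current partial product.

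For the backward direction, I propose a greedy descent-removal construction: given $\pi$ avoiding $jki$, iteratively remove a ``safe'' right descent. The key claim is that if $\pi \neq \mathrm{id}$ avoids $jki$, then some right descent $a$ (i.e., $\pi(a) > \pi(a+1)$) satisfies that $\pi s_a$ still avoids $jki$, so we may recurse on $\pi s_a$ and append $s_a$ to the resulting reduced expression. To see the existence of such a safe descent, I would rely on the equivalence between conditions~(\ref{condition:patternCambrian}') and~(\ref{condition:alignedCambrian}'): removing the inversion $(\pi(a), \pi(a+1))$ can fail to preserve condition~(\ref{condition:alignedCambrian}') only when $\pi(a+1) = j$ and some value less than $j$ sits at a position greater than $a+1$. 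If every right descent of $\pi$ were bad in this sense, $\pi$ would admit a unique right descent at a position $a$ with $\pi(a+1) = j$ and an increasing suffix $j = \pi(a+1) < \pi(a+2) < \cdots < \pi(n)$; but such a suffix contains no value less than $j$, contradicting the badness of the descent.

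The main obstacle is the tracking lemma: correctly identifying the correspondence between automaton states and partial-product statuses requires a detailed combinatorial analysis of the recursive structure in \cref{fig:automataRecursive}, and in particular a precise description of which inversions have been created at each state. The greedy argument in the backward direction, while conceptually simple, also depends on the careful characterization of safe descents sketched above. The symmetric case of $\automatonD(j)$ and the pattern $kij$ follows by an analogous argument with the role of right descents replaced by the dual notion.
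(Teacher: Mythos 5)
Your architecture is sound and genuinely different from the paper's: the paper proves the equivalence by induction on length via three left-multiplication lemmas (\cref{lem:vincent2,lem:vincent3,lem:vincent4}), and constructs an accepted word by peeling off the cycle $s_{m-1}\cdots s_j$ for the minimal $m>j$ reversed with $j$, whereas you reduce everything to a ``tracking lemma'' and then build the word greedily from the right. Your greedy step is correct and complete as sketched: a short case analysis shows that removing a descent at position $a$ from a $jki$-avoiding $\pi$ can create a $jki$ only when $\pi(a+1)=j$, $\pi(a)>j$, and some value $<j$ sits after position $a+1$; and if every descent were of this form, $\pi$ would have a unique descent with an increasing suffix starting at the value $j$, a contradiction. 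This is a nice alternative to the paper's construction.

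The genuine gap is the tracking lemma itself, which carries the entire weight of both implications and is not proved. The direction ``all partial products avoid $jki$ $\Rightarrow$ accepted'' is the crux. Two concrete problems. First, the proposed induction on $n$ does not get off the ground: a reduced word in $\fS_n$ has its occurrences of $s_{n-1}$ interspersed arbitrarily, so there is no reduction of such a word to one in $\fS_{n-1}$ on which to invoke the inductive hypothesis; the workable induction is on the length of the word (peeling one letter), which is what the paper does. Second, the asserted correspondence ``a transition leads out of the accepting part exactly when the swap introduces $jki$'' cannot be taken as a definition-level observation, because the state reached is a function of the \emph{word}, not of the partial product (a permutation can be reached at distinct states by distinct reduced words, cf.\ \cref{prop:sameStateAcceptedReducedExpressionsRefined}\,(iii)). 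Making it precise requires showing that if a word with all partial products avoiding $jki$ sits at the ill state of column $m$, then its product has a specific block structure (values of $[m]$ and of $[n]\ssm[m]$ segregated except for one displaced pair) forcing the next $s_m$ to create a $jki$, and dually that no letter read from a healthy state can create one. That analysis is exactly the content of \cref{lem:vincent2,lem:vincent3,lem:vincent4} and \cref{prop:sameStateAcceptedReducedExpressionsRefined}, and you flag it yourself as ``the main obstacle'' without supplying it; as written, the proposal establishes the theorem only modulo this missing core.
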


Let us warn the reader on the fact that~$j$ is fixed in \cref{thm:patternAvoidance}, while $i$ and~$k$ are arbitrary such that~$1 \le i < j < k \le n$.
A priori, we should try all possible reduced words of~$\pi$ to decide if one is accepted by the automaton~$\automatonU(j)$ (resp.~$\automatonD(j)$).
However, we can show that if~$\pi$ contains no subword $jki$ (resp.~$kij$) with~$i < j < k$ and has a descent~$s_\ell$ distinct from~$s_{j-1}$ (resp.~$s_j$), then it has a reduced word starting with~$s_\ell$ and accepted by the automaton~$\automatonU(j)$ (resp.~$\automatonD(j)$).
In other words, there is no loss of generality in starting constructing a reduced word for~$\pi$ as long as we stay in the states of the top row of~$\automatonU(j)$ (resp.~$\automatonD(j)$).
This yields a simple algorithm to construct a reduced word accepted by~$\automatonU(j)$ (resp.~$\automatonD(j)$).
It also yields natural tree structures on the permutations characterized by \cref{thm:patternAvoidance}, which can be glanced upon in \cref{fig:TreePartialOrientations}.
These algorithmic and combinatorial consequences of \cref{thm:patternAvoidance} are explored in \cref{sec:algorithmicCombinatorialConsequences}.
Most results of \cref{sec:proofPatternAvoidance,sec:algorithmicCombinatorialConsequences} are stated with respect to both automata~$\automatonU(j)$ and $\automatonD(j)$ but proved only for $\automatonU(j)$ as all proofs for~$\automatonD(j)$ are symmetric.

\begin{figure}
	\centerline{
		$\automatonU(j) \eqdef$
		\begin{tikzpicture}[shorten >=1pt, node distance=2cm, on grid, auto, baseline=-1.5cm]
			\node[state,initial,accepting,minimum size=0.5cm] (q_0) {}; 
			\node[state,accepting,minimum size=0.5cm] (q_1) [below= 1.5cm of q_0] {}; 
			\node[state,minimum size=0.5cm] (q_2) [below= 1.5cm of q_1] {}; 
			\node (q_3) [right= 2.5cm of q_0] {$\automatonU(j+1)$};
			\path[->] 
				(q_0) edge node [swap] {$s_{j-1}$} (q_1)
					  edge node {$s_j$} (q_3)
				(q_1) edge node [swap] {$s_j$} (q_2);
		\end{tikzpicture}
		\qquad\qquad
		$\automatonD(j) \eqdef$
		\begin{tikzpicture}[shorten >=1pt, node distance=2cm, on grid, auto, baseline=-1.5cm] 
			\node[state,initial,accepting,minimum size=0.5cm] (q_0) {}; 
			\node[state,accepting,minimum size=0.5cm] (q_1) [below= 1.5cm of q_0] {}; 
			\node[state,minimum size=0.5cm] (q_2) [below= 1.5cm of q_1] {}; 
			\node (q_3) [right= 2.5 cm of q_0] {$\automatonD(j-1)$};
			\path[->] 
				(q_0) edge node [swap] {$s_{j}$} (q_1)
					  edge node {$s_{j-1}$} (q_3)
				(q_1) edge node [swap] {$s_{j-1}$} (q_2);
		\end{tikzpicture}
	}
	\caption{The automata $\automatonU(j)$ (left) and $\automatonD(j)$ (right) defined recursively.}
	\label{fig:automataRecursive}
\end{figure}

\begin{figure}
	\centerline{
		\begin{tikzpicture}[shorten >=1pt, node distance=2cm, on grid, auto]
			\node[state,initial,accepting,minimum size=0.5cm] (hj-1)   {}; 
			\node[state,accepting,minimum size=0.5cm] (ij-1) [below= 1.5cm of hj-1] {}; 
			\node[state,minimum size=0.5cm] (dj-1) [below= 1.5cm of ij-1] {}; 
			\node[state,accepting,minimum size=0.5cm] (hj) [right= 2.5cm of hj-1] {};
			\node[state,accepting,minimum size=0.5cm] (ij) [below= 1.5cm of hj] {}; 
			\node[state,minimum size=0.5cm] (dj) [below= 1.5cm of ij] {}; 
			\node[state,accepting,minimum size=0.5cm] (hj+1) [right= 2.5cm of hj] {};
			\node[state,accepting,minimum size=0.5cm] (ij+1) [below= 1.5cm of hj+1] {}; 
			\node[state,minimum size=0.5cm] (dj+1) [below= 1.5cm of ij+1] {};	  
			\node (void) [right= 2.5cm of hj+1] {\dots};
			\node[state,accepting,minimum size=0.5cm] (hn-2) [right= 2.5cm of void] {};
			\node[state,accepting,minimum size=0.5cm] (in-2) [below= 1.5cm of hn-2] {}; 
			\node[state,minimum size=0.5cm] (dn-2) [below= 1.5cm of in-2] {}; 
			\node[state,accepting,minimum size=0.5cm] (hn-1) [right= 2.5cm of hn-2] {}; 
			\node[state,accepting,minimum size=0.5cm] (in-1) [below= 1.5cm of hn-1] {}; 
			\path[->] 
				(hj-1) edge node [swap] {$s_{j-1}$} (ij-1)
					   edge node {$s_j$} (hj)
				(ij-1) edge node [swap] {$s_j$} (dj-1)
				(hj) edge node [swap] {$s_{j}$} (ij)
					 edge node {$s_{j+1}$} (hj+1)
				(ij) edge node [swap] {$s_{j+1}$} (dj)			
				(hj+1) edge node [swap] {$s_{j+1}$} (ij+1)
					   edge  node {$s_{j+2}$} (void)
				(ij+1) edge  node [swap] {$s_{j+2}$} (dj+1)			  
				(void) edge node {$s_{n-1}$} (hn-2)
				(hn-2) edge node [swap] {$s_{n-2}$} (in-2)
					   edge node {$s_{n-1}$} (hn-1)
				(in-2) edge node [swap] {$s_{n-1}$} (dn-2)
				(hn-1) edge node [swap] {$s_{n-1}$} (in-1);
		\end{tikzpicture}
		}
	\caption{The complete automaton~$\automatonU(j)$.}
	\label{fig:automataComplete}
\end{figure}

\enlargethispage{.3cm}
Consider now arbitrary subsets~$U$ and~$D$ of~$\{2, \dots, n-1\}$.
It follows from \cref{thm:patternAvoidance} that a permutation is minimal in its $(U,D)$-permutree class if and only if it admits a reduced word accepted by~$\automatonU(j)$ for each~$j \in U$ and by~$\automatonD(j)$ for each~$j \in D$.
In general, the reduced words accepted by the automata~$\automatonU(j)$ for each~$j \in U$ and by~$\automatonD(j)$ for each~$j \in D$ are distinct.
We prove however in \cref{sec:intersectionsAutomata} that there is a reduced word simultaneously accepted by all these automata when~$U$ and~$D$ are disjoint.

\begin{theorem}\label{thm:permutreeMinimal}
Consider two disjoint subsets~$U$ and~$D$ of~$\{2, \dots, n-1\}$.
The following conditions are equivalent for~$\pi \in \fS_n$:
\begin{itemize}
	\item $\pi$ admits a reduced word accepted by all automata~$\automatonU(j)$ for~$j \in U$ and~$\automatonD(j)$ for~${j \in D}$,
	\item $\pi$ contains no subword $jki$ if~$j \in U$ and~$kij$ if~$j \in D$ for any~$i < j < k$.
\end{itemize}
\end{theorem}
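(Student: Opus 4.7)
The forward implication is immediate: a reduced expression of $\pi$ accepted by all the automata $\automatonU(j)$ for $j \in U$ and $\automatonD(j)$ for $j \in D$ is in particular accepted by each of them individually, and \cref{thm:patternAvoidance} then yields the prescribed pattern avoidance one automaton at a time.

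For the reverse implication, the plan is an induction on the length of $\pi$ that builds the accepting reduced expression one letter at a time. The naive version --- ``find a left descent $s_\ell$ such that $s_\ell\pi$ still satisfies the same pattern-avoidance hypothesis'' --- is too weak, because swapping the values $\ell$ and $\ell+1$ in $\pi$ can create fresh forbidden subwords (one can check this already for $\pi = 3214$, $\ell = 2$, $U = \{2\}$). Instead, the induction must track the joint state of the automata: when $s_\ell$ moves one automaton out of its initial state, the residual condition controlling the rest of the run gets shifted accordingly. For instance, reading $s_j$ from the initial state of $\automatonU(j)$ lands in the top-row state that, by the recursive definition, behaves like the initial state of $\automatonU(j+1)$, so the residual condition on the remaining permutation becomes an $\automatonU(j+1)$-style pattern avoidance. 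The appropriate strengthening of the inductive statement is thus: for any jointly reachable accepting configuration $(q_j)_{j \in U \cup D}$ in the product automaton $\prod_{j \in U} \automatonU(j) \times \prod_{j \in D} \automatonD(j)$, and any permutation $\pi'$ satisfying the state-dependent pattern-avoidance condition encoded by $(q_j)$, there is a reduced expression of $\pi'$ accepted from this configuration.

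The core lemma is then the associated descent-selection claim: from any such configuration, there exists a left descent $s_\ell$ of $\pi'$ whose reading preserves both the accepting character of the configuration and the refined pattern-avoidance hypothesis. For a single component this reduces to the observation already recorded after \cref{thm:patternAvoidance} --- the pattern-avoidance hypothesis supplies descents different from the unique fragile label of the current state. The main obstacle is combining these per-component statements into a simultaneous choice, and this is exactly where the hypothesis $U \cap D = \varnothing$ is used: at the initial configuration it keeps the fragile labels $\{s_{j-1} : j \in U\}$ and $\{s_{j} : j \in D\}$ sufficiently uncoupled, and the same structural reason persists after each transition because the two families $\{\automatonU(j)\}_{j \in U}$ and $\{\automatonD(j)\}_{j \in D}$ impose constraints on disjoint sides of each relevant index. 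The bulk of \cref{sec:intersectionsAutomata} will therefore be a case analysis, organised by the current position of each component (top row, middle row, or already slid to the right), verifying that a compatible descent exists in every case; the disjointness hypothesis is precisely what rules out the configurations in which the per-component constraints would otherwise have no common solution.
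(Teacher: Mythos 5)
Your forward implication and your diagnosis of why the naive induction fails (the $3214$ example is correct) match the paper, but the substantive direction is a plan rather than a proof: everything hinges on the ``descent-selection claim'' --- that from every jointly reachable accepting configuration there is a left descent compatible with \emph{all} components simultaneously --- and you assert it, attribute it to disjointness, and defer its verification to an unperformed case analysis. That claim \emph{is} the theorem; without exhibiting the descent and checking compatibility, nothing has been proved. It is also not clear your per-step lemma is true in the generality you state it: once a component sits in an ill (middle-row) state, whether the run can still be completed depends on a global property of the remaining permutation (it must stabilize the relevant interval $[j]$), not just on a local choice of descent, so ``preserves the accepting character'' needs a much more careful formulation than you give.

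The paper's proof avoids all of this by two moves you do not have. First, a reduction via \cref{prop:sameStateAcceptedReducedExpressionsRefined}\,(ii): every automaton $\automatonU(j)$ with $\ninv_j(\pi)=0$ (resp.\ $\automatonD(j)$ with $\ninv^j(\pi)=0$) accepts \emph{every} reduced expression of a $jki$-avoiding (resp.\ $kij$-avoiding) permutation, so those components can simply be discarded; this is what lets the construction keep every remaining automaton in the healthy row throughout, so ill-state configurations never arise. Second, an explicit choice rather than an existence claim: take $j_\circ=\max(U)$ and the minimal $m>j_\circ$ inverted with $j_\circ$; then $\pi$ contains $mj_\circ\ell$ for each $j_\circ<\ell<m$, which forces $\ell\notin U$ (maximality of $j_\circ$) and $\ell\notin D$ (the $kij$-avoidance), so the whole block $s_{m-1}\cdots s_{j_\circ}$ can be read while every automaton either loops or advances along its healthy row, and \cref{lem:vincent2,lem:vincent4} reduce to a strictly shorter instance with shifted sets. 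This is exactly the case analysis you left open, and the disjointness of $U$ and $D$ enters there in a precise way (through the $kij$-avoidance killing $\ell\in D$), not merely as the vague ``uncoupling'' you invoke. As written, your proposal has a genuine gap at its central step.
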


\cref{thm:permutreeMinimal} implies that given any permutation~$\pi$ avoiding $jki$ if~$j \in U$ and~$kij$ if~$j \in D$, we can sort~$\pi$ while preserving these avoiding conditions.
The resulting sorting procedures, that we call \defn{$(U,D)$-permutree sorting}, are discussed in~\cref{subsec:permutreeSorting}.
For instance, stack sorting is a $(\{2, \dots, n-1\}, \varnothing)$-permutree sorting.

Finally, in the particular situation when the subsets~$U$ and~$D$ form a partition of~$\{2, \dots, n-1\}$, we actually show that the reduced word simultaneously accepted by the automata $\automatonU(j)$ for~${j \in U}$ and~$\automatonD(j)$ for~$j \in D$ is the $c$-sorting word of~$\pi$ as defined in~\cite{Reading-CoxeterSortable}.
This yields in particular an alternative proof that condition~(\ref{condition:sortableCambrian}') characterizes the Cambrian minimal permutations.
This new perspective on $c$-sortability is explored in \cref{sec:coxeterSortable}.


\section{Automata for reduced words}\label{sec:proofPatternAvoidance}

\subsection{Reduced words, automata, and subword avoidance}

We start with properly fixing the few notations needed in this paper.
We consider the \defn{symmetric group}~$\fS_n$ of permutations of the set~$[n] \eqdef \{1,\dots,n\}$.
It is generated by the \defn{simple transpositions}~$s_i \eqdef (i \;\; i+1)$ for $i \in [n-1]$ which are involutions~$s_i^2 = id$ and satisfy the commutation relations $s_i \cdot s_j =s_j \cdot s_i$ if $|i-j|>1$ and the braid relations~$s_i \cdot s_{i+1} \cdot s_i = s_{i+1} \cdot s_i \cdot s_{i+1}$.
Note that we multiply permutations as usual, so that the left multiplication by~$s_i$ exchanges the entries with values~$i$ and~$i+1$, while the right multiplication by~$s_i$ exchanges the entries at positions~$i$ and~$i+1$.
Each permutation~$\pi$ decomposes into products of transpositions of the form~$\pi = s_{i_1} \cdots s_{i_k}$ with~$i_1, \dots, i_k \in [n-1]$.
The minimal number of transpositions in such a decomposition is the \defn{length}~$\ell(\pi)$ of~$\pi$ and the decompositions of length~$\ell(\pi)$ are the \defn{reduced words} for~$\pi$.

Consider now the automata $\automatonU(j)$ and $\automatonD(j)$ described in the introduction, see \cref{fig:automataRecursive,fig:automataComplete,fig:TreePartialOrientations}.
We call a state \defn{healthy}, \defn{ill}, or \defn{dead} depending on whether it belongs to the top, middle, or bottom row of the automata.
Each state has $n-1$ possible transitions, one for each $s_i$ for~$i \in [n-1]$, but we only explicitly indicate the ones between different states.
The automata $\automatonU(j)$ and $\automatonD(j)$ take as entry a reduced word~$s_{i_1} \cdots s_{i_\ell}$ for a permutation of~$\fS_n$ and read it from left to right.
We start at the initial state (marked with ``start''), and at step~$t$ we follow the transition marked by the letter~$s_{i_t}$ if any, or stay in the current state otherwise.
After~$\ell$ steps, the reduced word~$s_{i_1} \cdots s_{i_\ell}$ is declared accepted if the current state is accepting (doubly circled, healthy or ill states), and rejected otherwise (dead states).

For a fixed~$j \in \{2, \dots, n-1\}$, we say that a permutation~$\pi$ \defn{avoids} $jki$ (resp.~$kij$) if for any~$i < j < k$, the word~$jki$ (resp.~$kij$) does not appear as a subword of the one-line notation of~$\pi$, or said differently if there are no positions~$p < q < r$ such that~$\pi(r) < \pi(p) = j < \pi(q)$ (resp.~$\pi(q) < \pi(r) = j < \pi(p)$).
We insist on the fact that while the value~$j$ is fixed, $i$ and~$k$ take all possible values such that~$1 \le i < j < k \le n$.
This convenient notion here should not be mixed up with the notion of pattern avoidance where~$j$ is not fixed.
For instance, a permutation avoids the pattern~$231$ if and only if it avoids $jki$ for all~$j \in \{2, \dots, n-1\}$.

\begin{example}\label{exm:patternAvoidance}
	The permutation~$42135$ avoids~$2ki$, $3ki$, and~$4ki$ (and therefore the pattern~$231$), but contains~$ki3$ (and therefore the pattern~$312$) because its one-line notation contains~$423$. 
\end{example}

\subsection{Behavior under left multiplication}

In the perspective of proving \cref{thm:patternAvoidance}, we study the two properties ``$\pi$ admits a reduced word accepted by~$\automatonU(j)$ (resp.~$\automatonD(j)$)'' and ``$\pi$ avoids $jki$ (resp.~$kij$)''.
In this section, we study the behavior of these properties under left multiplication.
We treat separately the cases when we multiply by a permutation commuting with both~$s_{j-1}$ and~$s_j$ (\cref{lem:vincent2}), by $s_{j-1}$ (\cref{lem:vincent3}), and by~$s_j$ (\cref{lem:vincent4}).

\begin{lemma}\label{lem:vincent2}
If two permutations~$\sigma, \tau \in \fS_n$ are such that $\sigma([j-1]) = [j-1]$, $\sigma(j) = j$, and $\sigma([n]\ssm [j]) = [n]\ssm [j]$ and $\length(\sigma \cdot \tau) = \length(\sigma) + \length(\tau)$, then:
\begin{enumerate}
	\item $\tau$ admits a reduced word accepted by $\automatonU(j)$ (resp.~$\automatonD(j)$) if and only if $\sigma \cdot \tau$ admits a reduced word accepted by $\automatonU(j)$ (resp.~$\automatonD(j)$),
	\item $\tau$ avoids $jki$ (resp.~$kij$) if and only if $\sigma \cdot \tau$ avoids $jki$ (resp.~$kij$).
\end{enumerate}
\end{lemma}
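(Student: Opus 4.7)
The plan splits into the two parts of the statement; by the paper's symmetry convention I focus on $\automatonU(j)$ and $jki$, the $\automatonD(j)$/$kij$ case being parallel.

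For part~(2), observe that the conditions on $\sigma$ mean that $\sigma$ preserves the partition of $\{1, \dots, n\}$ into the three blocks $[j-1]$, $\{j\}$, and $\{j+1, \dots, n\}$. Consequently, for each position $p$, the value $(\sigma\tau)(p) = \sigma(\tau(p))$ lies in the same block as $\tau(p)$. Since the presence of a subword $jki$ with $i < j < k$ depends only on the positions at which $\tau$ takes the value $j$, a value below $j$, and a value above $j$, it transfers verbatim between $\tau$ and $\sigma\tau$.

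For part~(1), I will induct on $\ell(\sigma)$ to reduce to the atomic case $\sigma = s_i$ with $i \in [n-1] \ssm \{j-1, j\}$; any such simple transposition fixes $j$ and preserves both $[j-1]$ and $[n] \ssm [j]$, so it satisfies the hypotheses on $\sigma$, and the inductive step peels off one left descent of $\sigma$ (which necessarily has index in $[n-1] \ssm \{j-1,j\}$ since reduced expressions of $\sigma$ only use such letters). The crucial observation is that such an $s_i$ is a self-loop at the initial healthy state of $\automatonU(j)$, because the only non-loop transitions there are labeled $s_{j-1}$ (down) and $s_j$ (right). The forward direction is then immediate: prepending any reduced expression of $\sigma$ to an accepted reduced expression of $\tau$ yields a reduced expression of $\sigma\tau$ (reduced by the length hypothesis) whose trajectory stays at the initial state throughout the prepended portion and afterwards coincides with that of $\tau$, ending in the same accepting state.

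The reverse direction is the main obstacle. Given an accepted reduced expression $v = v_1 \cdots v_m$ of $s_i \tau$, I plan to apply the left exchange condition: since $s_i$ is a left descent of $s_i\tau$, there exists an index $k$ such that $v' \eqdef v_1 \cdots \hat{v}_k \cdots v_m$ is a reduced expression of $\tau$. The heart of the argument is to show that the deleted letter $v_k$ is itself a self-loop at the automaton state $q_{k-1}$ reached just before step $k$; once this is shown, the trajectory of $v'$ agrees with that of $v$ with one self-loop omitted, so $v'$ lands in the same (accepting) final state and is therefore an accepted reduced expression of $\tau$. To establish the loop property, one reads off from the exchange that $v_k = s_b$, where $b$ is the index at which the values $i$ and $i+1$ occupy adjacent positions $b, b+1$ in the one-line notation of the partial product $v_1 \cdots v_{k-1}$. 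Since $i \neq j-1, j$, the pair $\{i, i+1\}$ lies entirely in $[j-1]$ or entirely in $\{j+1, \dots, n\}$; a case analysis on whether $q_{k-1}$ is healthy or ill and on its column index, combined with an invariant relating $q_{k-1}$ to the relative positions of the values $j-1, j, j+1, \dots$ in $v_1 \cdots v_{k-1}$, shows that $b$ cannot coincide with the index of any non-loop transition at $q_{k-1}$. Proving this invariant and completing the case analysis is the most delicate part of the argument, and it is what the subsequent two lemmas (treating left multiplication by $s_{j-1}$ and by $s_j$) will need to build upon.
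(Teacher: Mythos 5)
Your part~(2) and the forward implication of part~(1) are correct and essentially identical to the paper's argument (the paper phrases part~(1) by noting that every reduced expression of~$\sigma$ lies in the parabolic subgroup avoiding $s_{j-1}$ and~$s_j$, so the automaton idles at the initial state while reading it). The reduction of part~(1) to the atomic case $\sigma = s_i$, $i \notin \{j-1,j\}$, is also sound.

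The genuine gap is the reverse implication of part~(1). Your entire argument there rests on the claim that the letter $v_k$ removed by the exchange condition is a self-loop at the state $q_{k-1}$, and you do not prove this: you reduce it to ``an invariant relating $q_{k-1}$ to the relative positions of the values $j-1, j, j+1, \dots$'' together with a case analysis, and then declare that establishing this is ``what the subsequent two lemmas will need to build upon.'' That deferral is backwards --- \cref{lem:vincent3,lem:vincent4} are independent statements about left multiplication by $s_{j-1}$ and $s_j$ and supply no such invariant --- so as written the key step is simply asserted. The claim is also not routine: at a healthy state in the column with parameter $j'$ one can check that $j' = (v_1\cdots v_{k-1})^{-1}(j)$, which together with $j \notin \{i,i+1\}$ rules out $b \in \{j'-1,j'\}$; but at an ill state the position of the value $j$ drifts under subsequent loops, and excluding $\{b,b+1\} = \{j',j'+1\}$ there requires tracking more of the permutation (essentially the bookkeeping that the paper only sets up later, in \cref{prop:sameStateAcceptedReducedExpressionsRefined}). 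So your route needs machinery that, in the paper's logical order, is developed after this lemma. Note also that the paper itself dispatches the reverse direction in one line; the honest content there is that one may take the accepted reduced expression of $\sigma\cdot\tau$ to begin with a reduced expression of~$\sigma$ and strip that prefix --- which is exactly how the lemma is invoked in the proofs of \cref{thm:patternAvoidance} and \cref{lem:vincent4} --- whereas your exchange-condition strategy tries to handle an arbitrary accepted reduced expression head-on and therefore inherits all of the difficulty. Either supply and prove the state-versus-permutation invariant in full (including the ill-state case), or replace this step by an argument that first normalizes the accepted reduced expression to start with a reduced word for~$\sigma$.
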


\begin{proof}
We deal with the two statements separately:
\begin{enumerate} 
	\item The conditions on~$\sigma$ imply that none of its reduced words contain the transpositions $s_{j-1}$ or $s_j$. Therefore, while reading any reduced word for~$\sigma$, the automaton~$\automatonU(j)$ stays in the initial state. The result immediately follows.
	\item Since~$\sigma$ permutes only values smaller than $j$ between themselves and values greater than $j$ between themselves, we see a subword~$jki$ with~$i < j < k$ in~$\tau$ if and only if we see a subword~$jk'i'$ with~$i' < j < k'$ in~$\sigma \cdot \tau$, where~$i' = \sigma(i)$ and~$k' = \sigma(k)$.
	\qedhere
\end{enumerate}
\end{proof}

\begin{example}\label{exm:lemVincent2}
Consider~$j \eqdef 4$ and the permutations~$\sigma \eqdef 312465 = s_2 \cdot s_1 \cdot s_5$, $\tau_1 \eqdef 143256 = s_3 \cdot s_2 \cdot s_3$, and~$\tau_2 \eqdef 124536 = s_3 \cdot s_4$.
Multiplying we obtain~$\sigma\cdot\tau_1=342165$ and~$\sigma\cdot\tau_2 = 314625$. Observe that
\begin{enumerate}
	\item $\automatonU(4)$ accepts all reduced words of both~$\tau_1$ and~$\sigma\cdot \tau_1$ on its first ill state, and rejects all reduced words of both~$\tau_2$ and~$\sigma\cdot \tau_2$,
	\item both~$\tau_1$ and~$\sigma\cdot\tau_1$ avoid~$4ki$, while both~$\tau_2$ and~$\sigma\cdot\tau_2$ contain~$4ki$.
\end{enumerate}
\end{example}

\begin{lemma}\label{lem:vincent3}
If a permutation $\tau \in \fS_n$ has a reduced word starting with $s_{j-1}$ (resp.~$s_j$) and accepted by $\automatonU(j)$ (resp.~$\automatonD(j)$), then
\begin{enumerate} 
	\item $\tau$ does not permute $j$ and $j+1$ (resp.~$j-1$ and $j$),
	\item $\tau$ avoids $jki$ (resp.~$kij$).
\end{enumerate}
\end{lemma}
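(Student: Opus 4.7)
My plan is to exploit the very restricted shape of the automaton $\automatonU(j)$ to rephrase the hypothesis as a structural condition on $\tau$. From the initial healthy state $h_{j-1}$, the letter $s_{j-1}$ moves to the ill state $i_{j-1}$; from $i_{j-1}$, the only non-loop transition is $s_j$, which lands in the non-accepting dead state $d_{j-1}$. Consequently, a reduced expression starting with $s_{j-1}$ is accepted by $\automatonU(j)$ if and only if its tail contains no occurrence of $s_j$. Writing the given accepted reduced expression as $s_{j-1} \cdot s_{i_2} \cdots s_{i_\ell}$ and setting $\sigma \eqdef s_{i_2} \cdots s_{i_\ell}$, this tail is a reduced expression for $\sigma$ using only generators in $\{s_i : i \in [n-1] \ssm \{j\}\}$, so $\sigma$ belongs to the parabolic subgroup $\fS_{\{1,\dots,j\}} \times \fS_{\{j+1,\dots,n\}}$ and therefore preserves both value sets $\{1,\dots,j\}$ and $\{j+1,\dots,n\}$.

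Once this structural fact is in hand, both conclusions follow from a direct inspection of one-line notations. Left multiplication of $\sigma$ by $s_{j-1}$ swaps the values $j-1$ and $j$, both of which belong to $\{1,\dots,j\}$, so $\tau = s_{j-1}\sigma$ still satisfies $\tau(\{1,\dots,j\}) = \{1,\dots,j\}$ and $\tau(\{j+1,\dots,n\}) = \{j+1,\dots,n\}$. For (1), this gives $\tau^{-1}(j) = \sigma^{-1}(j-1) \in \{1,\dots,j\}$ while $\tau^{-1}(j+1) = \sigma^{-1}(j+1) \in \{j+1,\dots,n\}$, so $j$ appears strictly before $j+1$ in the one-line notation of $\tau$, meaning $\tau$ does not permute $j$ and $j+1$. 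For (2), I would argue by contradiction: if $\tau$ contained a subword $jki$ at positions $p < q < r$ with $i < j < k$, then $\tau(p) = j$ and $\tau(r) = i$ would force $p, r \le j$, while $\tau(q) = k$ would force $q \ge j+1$; but then $r \le j < j+1 \le q$ contradicts $q < r$.

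The corresponding statements for $\automatonD(j)$ follow by the symmetric argument, with $s_j$ as the initial letter, the value partition $\{1,\dots,j-1\} \sqcup \{j,\dots,n\}$ playing the role of $\{1,\dots,j\} \sqcup \{j+1,\dots,n\}$, and a $kij$ pattern in place of $jki$. The only genuine step in the whole argument is the first one, where one has to read off from the automaton that the ill state $i_{j-1}$ is a trap whose only escape is fatal; after that, the proof reduces to routine bookkeeping about where values sit under a Young subgroup element followed by a single adjacent transposition.
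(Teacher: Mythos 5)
Your proof is correct and follows essentially the same route as the paper's: the key step in both is that acceptance forces the tail after the initial $s_{j-1}$ to contain no $s_j$ (the ill state's only escape is to the absorbing dead state), so that tail lies in the parabolic subgroup preserving the value sets $\{1,\dots,j\}$ and $\{j+1,\dots,n\}$, from which the subword avoidance in part (2) follows by comparing positions exactly as the paper does. The only cosmetic difference is in part (1), where the paper argues by contradiction (if $j$ and $j+1$ were reversed, transitivity of inversions would force an $s_j$ after the $s_{j-1}$ and hence a dead state), while you read the same conclusion directly off the parabolic structure; both are valid.
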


\begin{proof}
Consider a reduced word $w$ starting with $s_{j-1}$ and accepted by~$\automatonU(j)$. 
We deal with the two statements separately:
\begin{enumerate} 
	\item Since $w$ starts with~$s_{j-1}$, the values~$j-1$ and~$j$ are reversed in~$\tau$. If~$j$ and~$j+1$ were also reversed in~$\tau$, we would obtain that~$j-1$ and~$j+1$ are reversed. It follows that $w$ must contain a~$s_j$ at some point after the~$s_{j-1}$. But this would lead to a dead state, contradicting the assumption that~$w$ is accepted.
	\item Let $\tau = s_{j-1}\cdot \rho$. Since any reduced word of $\rho$ cannot contain~$s_j$ (as $w$ is accepted by $\automatonU(j)$), we have that $\rho([j]) = [j]$ and $\rho([n+1] \ssm [j]) = [n+1] \ssm [j]$ and find that $\rho$ contains no subword~$ki$ with $i < j < k$. Therefore, $\tau$ avoids $jki$.
	\qedhere
\end{enumerate}
\end{proof}

\begin{example}\label{exm:lemVincent3}
Consider~$j \eqdef 4$ and the permutation~$\tau \eqdef 413265$, whose reduced word~$s_3 \cdot s_5 \cdot s_2 \cdot s_1 \cdot s_3$ is accepted by~$\automatonU(4)$.
Observe that
\begin{enumerate}
	\item $\tau$ indeed does not permute the values~$4$ and~$5$,
	\item $\tau$ avoids~$4ki$.
\end{enumerate}
\end{example}

\begin{lemma}\label{lem:vincent4}
If a permutation~$\tau \in \fS_n$ does not permute $j$ and $j+1$ (resp.~$j-1$ and~$j$), then
\begin{enumerate} 
	\item $s_j \cdot \tau$ (resp.~$s_{j-1} \cdot \tau$) admits a reduced word accepted by $\automatonU(j)$ (resp.~$\automatonD(j)$) if and only if $\tau$ admits a reduced word accepted by $\automatonU(j+1)$ (resp.~$\automatonD(j-1)$),
	\item $s_j \cdot \tau$ (resp.~$s_{j-1} \cdot \tau$) avoids $jki$ (resp.~$kij$) if and only if $\tau$ avoids $(j+1)ki$ (resp.~$ki(j-1)$).
\end{enumerate}
\end{lemma}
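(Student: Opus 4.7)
My plan is to prove the lemma for $\automatonU(j)$ (the $\automatonD(j)$ case being symmetric) and to treat the two parts separately. For part~(2), I would argue by a direct position bijection: since $s_j$ only interchanges the values $j$ and $j{+}1$ in one-line notation while fixing all other values, and the hypothesis $\tau^{-1}(j) < \tau^{-1}(j{+}1)$ places the value $j{+}1$ of $s_j\tau$ before the value $j$, each subword $jki$ in $s_j\tau$ with $i<j<k$ corresponds bijectively to a subword $(j{+}1)ki$ in $\tau$ with $k \ge j{+}2$; conversely, the hypothesis precludes the borderline case $i=j$ in any potential $(j{+}1)ki$-pattern of $\tau$, because $j$ sits to the left of $j{+}1$ in $\tau$.

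For the forward direction of part~(1), prepending $s_j$ to any reduced expression $v$ of $\tau$ accepted by $\automatonU(j{+}1)$ yields a reduced expression of $s_j\tau$ (by the hypothesis $\length(s_j\tau)=\length(\tau)+1$); reading it in $\automatonU(j)$ takes the initial state to $h_j$ via the $s_j$ transition, after which the trajectory coincides with that of $v$ in $\automatonU(j{+}1)$, identified with the subautomaton of $\automatonU(j)$ rooted at $h_j$, and thus ends in an accepting state.

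The reverse direction of part~(1) is the crux, and I would prove it by induction on $\length(\tau)$ simultaneously over all admissible $j$. Given an accepted reduced expression $w$ of $s_j\tau$, one first observes that $w$ must contain $s_j$: the hypothesis implies $(s_j\tau)([j])\ne[j]$ and hence $s_j\tau\notin\langle s_i : i \ne j\rangle$. Writing $w = u \cdot s_j \cdot \bar v$ with $u$ the prefix before the first $s_j$, acceptance forces $u$ to avoid $s_{j-1}$ as well (otherwise reading $u$ would land the automaton at $i_{j-1}$ and the following $s_j$ would drop it to $d_{j-1}$), so $u$ lies in the parabolic $\langle s_i : i \ne j{-}1,j\rangle$. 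I then split on the first letter $s_\ell$ of $w$: when $\ell \in \{1,\dots,j{-}2\}\cup\{j{+}2,\dots,n{-}1\}$, the generator $s_\ell$ commutes with $s_j$ and fixes the values $j$ and $j{+}1$, so \cref{lem:vincent2} reduces the problem to the shorter permutation $s_\ell\tau$ handled by induction; when $\ell=j$, the suffix $\bar v$ is directly the required accepted reduced expression for $\tau$ in $\automatonU(j{+}1)$.

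The remaining case $\ell = j{+}1$ is the hardest, and I expect it to be the main difficulty. Here, $s_{j+1}$ being a left descent of $s_j\tau$ together with the hypothesis forces $\tau^{-1}(j{+}2) < \tau^{-1}(j) < \tau^{-1}(j{+}1)$, from which $s_{j+1} s_j \tau$ has $s_j$ as a left descent and factors as $s_j \tau''$ for a strictly shorter permutation $\tau''$ still satisfying the hypothesis of the lemma. The inductive hypothesis supplies an accepted reduced expression $v''$ of $\tau''$ in $\automatonU(j{+}1)$. Moreover, $\tau''$ itself has $s_{j+1}$ as a left descent, so a second application of the inductive hypothesis at the shifted index $j{+}1$ to the yet shorter $s_{j+1}\tau''$ lets me arrange $v'' = s_{j+1} v'''$ with $v'''$ accepted by $\automatonU(j{+}2)$. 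The braid relation $s_j s_{j+1} s_j = s_{j+1} s_j s_{j+1}$ combined with the cancellation $s_{j+1} s_{j+1} = e$ then produces the word $s_{j+1} s_j v'''$ of length $\length(\tau) = \length(v''') + 2$, which is a reduced expression of $\tau$ and is accepted by $\automatonU(j{+}1)$: the letter $s_{j+1}$ moves $h_j$ to $h_{j+1}$, the letter $s_j$ loops at $h_{j+1}$, and $v'''$ then traces an accepted trajectory of $\automatonU(j{+}2)$ from its initial state $h_{j+1}$. Coordinating this nested induction over both $\length(\tau)$ and $j$, while verifying the length count and acceptance of the braid-produced word, is the most delicate part of the argument.
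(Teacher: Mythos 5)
Your proof is correct, and for part~(2) and the forward direction of part~(1) it coincides with the paper's argument. For the converse of part~(1) you take a genuinely different, and substantially more detailed, route. The paper disposes of it in one line: $w$ must contain $s_j$, cannot start with $s_{j-1}$ by \cref{lem:vincent3}, and ``due to \cref{lem:vincent2} we can assume that $w$ starts with $s_j$''. That last reduction is immediate only when the prefix $u$ of $w$ before the first $s_j$ commutes with $s_j$, i.e.\ when $u$ fixes the value $j+1$; but $u$ is only known to avoid $s_{j-1}$ and $s_j$, so it may contain $s_{j+1}$, in which case writing $s_j\cdot\tau = \sigma\cdot s_j\cdot\rho'$ gives $\tau = (s_j\sigma s_j)\rho'$ where $s_j\sigma s_j$ need not satisfy the hypotheses of \cref{lem:vincent2} relative to $\automatonU(j+1)$ (for $\sigma = s_{j+1}$ it is the transposition exchanging $j$ and $j+2$, both of whose reduced expressions use $s_j$ and $s_{j+1}$, and the factorization is not even length-additive). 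Your induction on $\length(\tau)$ with a case split on the first letter isolates exactly this difficulty as the case $\ell = j+1$ and resolves it by rewriting $s_{j+1}s_j\tau = s_j\tau''$, applying the inductive hypothesis at index $j$ and then at index $j+1$, and using the braid relation to produce the word $s_{j+1}\cdot s_j\cdot v'''$; I checked the descent conditions, the length counts, and the automaton trajectory, and they are all correct. What the paper's appeal to \cref{lem:vincent2} buys is brevity; what your argument buys is an explicit treatment of the one prefix letter that does not commute with $s_j$, which is precisely the point the paper's phrasing passes over. Two cosmetic remarks: you never name the case $\ell = j-1$, but your observation that $u$ avoids $s_{j-1}$ already excludes it, so your case split is exhaustive; and the state names $h_j$, $i_{j-1}$, $d_{j-1}$ are not the paper's notation, though their meaning is clear from \cref{fig:automataComplete}.
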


\begin{proof}
We deal with the two statements separately:
\begin{enumerate} 
	\item Suppose that $w$ is a reduced word for $\tau$ accepted by $\automatonU(j+1)$. Since $\tau$ does not permute $j$ and $j+1$, we know that $s_j \cdot w$ is a reduced word for~$s_j \cdot \tau$, and it is accepted by~$\automatonU(j)$ by construction. Conversely assume that $s_j \cdot \tau$ admits a reduced word $w$ accepted by $\automatonU(j)$. Since $s_j \cdot \tau$ permutes $j$ and $j+1$, $w$ must contain a $s_j$ and cannot start by $s_{j-1}$ by \cref{lem:vincent3}. Due to \cref{lem:vincent2} we can also assume that $w$ starts with $s_j$. Thus the suffix is a reduced word for $\tau$ that is accepted by $\automatonU(j+1)$.
	\item Observe that since~$j$ and~$j+1$ are reversed in~$s_j \cdot \tau$ and not in~$\tau$, the value $j+1$ cannot serve as~$k$ in a subword~$jki$ of~$s_j \cdot \tau$ and the value~$j$ cannot serve as~$i$ in a subword $(j+1)ki$ in~$\tau$. The result thus immediately follow from the fact that the left multiplication by~$s_j$ only exchanges the values $j$ and $j+1$.
	\qedhere
\end{enumerate}
\end{proof}

\begin{example}\label{exm:lemVincent4}
Consider~$j \eqdef 4$ and the permutations~$\tau_1 \eqdef 142536$ and~$\tau_2 \eqdef 142563$ that do not permute~$4$ and~$5$. 
Multiplying we obtain~$s_4 \cdot \tau_1 = 152436$ and~$s_4 \cdot \tau_2 = 152463$. Observe that
\begin{enumerate}
	\item the reduced word~$s_4 \cdot s_3 \cdot s_4 \cdot s_2$ of~$s_4 \cdot \tau_1$ is accepted by~$\automatonU(4)$ and the reduced word~$s_3 \cdot s_4 \cdot s_2$ of~$\tau_1$ is accepted by~$\automatonU(5)$, while all reduced words of~$s_4 \cdot \tau_2$ are rejected by~$\automatonU(4)$ and all reduced words of~$\tau_2$ are rejected by~$\automatonU(5)$,
	\item $s_4 \cdot \tau_1$ avoids~$4ki$ and $\tau_1$ avoids~$5ki$, while $s_4 \cdot \tau_2$ contains~$463$ and $\tau_2$ contains~$563$.
\end{enumerate}
\end{example}

\subsection{Proof of \cref{thm:patternAvoidance}}

With these lemmas in hand, we are now ready to show \cref{thm:patternAvoidance} that we repeat here for convenience.

\newtheorem*{thm:patternAvoidance}{\cref{thm:patternAvoidance}}
\begin{thm:patternAvoidance}
Fix $j \in \{2, \dots, n-1\}$.
The following conditions are equivalent for~$\pi \in \fS_n$:
\begin{itemize}
	\item $\pi$ admits a reduced word accepted by the automaton~$\automatonU(j)$ (resp.~$\automatonD(j)$),
	\item $\pi$ contains no subword $jki$ (resp.~$kij$) with~$i < j < k$.
\end{itemize}
\end{thm:patternAvoidance}

\begin{proof}[Proof of \cref{thm:patternAvoidance}]
We work by induction on the length of the permutations.
Assume that a permutation $\pi$ admits a reduced word accepted by $\automatonU(j)$. Let $s_i$ be the first letter of this reduced word and let $\tau$ be such that $\pi = s_i \cdot \tau$. We distinguish three cases:\begin{itemize}
	\item If $i = j-1$, then $\pi$ avoids $jki$ by \cref{lem:vincent3}\,(2).
	\item If $i = j$, then $\tau$ admits a reduced word accepted by $\automatonU(j+1)$ by \cref{lem:vincent4}\,(1). We obtain by induction that $\tau$ avoids $(j+1)ki$. Thus $\pi = s_j \cdot \tau$ avoids $jki$ by \cref{lem:vincent4}\,(2).
	\item Otherwise, $\tau$ admits a reduced word accepted by $\automatonU(j)$ by \cref{lem:vincent2}\,(1), so that $\tau$ avoids $jki$ by induction. Thus $\pi = s_i \cdot \tau$ avoids $jki$ by \cref{lem:vincent2}\,(2).
\end{itemize}
In all three cases, we proved that $\pi$ avoids $jki$.

Assume now that a permutation $\pi$ avoids $jki$. Here, we have to be careful because not all reduced words for $\pi$ will be accepted by $\automatonU(j)$ \apriori. So we have to construct a good reduced word for~$\pi$. We distinguish two cases:
\begin{itemize}
	\item Assume first that there is $m > j$ such that $\pi$ reverses $j$ and $m$, and pick $m$ minimal for this property. It follows that $\pi$ reverses $\ell$ and $m$ for all $\ell$ in $\{j,\dots,m-1\}$. In other words, $\pi$ admits a reduced word starting by the cyclic permutation $(j, j+1, ..., m) = s_{m-1} \cdot s_{m-2} \cdots s_{j+1} \cdot s_j$. Define $\sigma = s_{m-1} \cdot s_{m-2}\cdots s_{j+1}$ and $\tau$ such that $\pi = \sigma \cdot s_j \cdot \tau$ and so that this word is reduced. By Lemmas \ref{lem:vincent2}\,(2) and \ref{lem:vincent4}\,(2), $\tau$ avoids $(j+1)ki$. By induction, we obtain that it admits a reduced word accepted by $\automatonU(j+1)$. By Lemmas \ref{lem:vincent2}\,(1) and \ref{lem:vincent4}\,(1), we conclude that $\pi$ admits a reduced word accepted by $\automatonU(j)$.
	\item Assume now that $j$ appears before all $m > j$ in $\pi$. Consider any reduced word for $\pi$. If this word is accepted by $\automatonU(j)$, we are done. Otherwise, it first uses $s_{j-1}$ and then $s_j$ (otherwise, $j$ and some $m > j$ would be exchanged). Call $i$ and $k$ the two elements that are exchanged when the reduced word first uses $s_j$. We have $i < j < k$ and $jki$ in $\pi$ (because $j$ and $k$ are not exchanged in $\pi$, and $i$ and $k$ are already exchanged so they will remain exchanged in $\pi$), a contradiction.
\end{itemize}
In both cases, we proved that $\pi$ admits a reduced word accepted by $\automatonU(j)$.
\end{proof}


\section{Structure of accepted reduced words}\label{sec:algorithmicCombinatorialConsequences}

In this section, we explore some additional properties of the set of reduced words accepted by the automata~$\automatonU(j)$ and~$\automatonD(j)$ and derive relevant algorithmic and combinatorial consequences.

\subsection{The set of accepted reduced words}\label{subsec:principles}

Observe that a given permutation~$\pi$ may admit both accepted and rejected reduced words.
For instance, the (non-simple) transposition~${(j-1 \;\; j+1)}$ has reduced words~$s_j \cdot s_{j-1} \cdot s_j$ accepted by~$\automatonU(j)$ and~$s_{j-1} \cdot s_j \cdot s_{j-1}$ rejected by~$\automatonU(j)$.
However, \cref{prop:prefixesReducedExpressions,prop:algorithm,prop:sameStateAcceptedReducedExpressions} below show that the set of accepted reduced words satisfies the following three principles:
\begin{itemize}
	\item \textbf{Who can do more can do less!} --- The set of accepted reduced words is closed by~prefix.
	\item \textbf{When health goes, everything goes!} --- If $\pi$ admits an accepted reduced word, then $\pi$ admits an accepted reduced word starting with any descent that remains in the healthy states.
	\item \textbf{All roads lead to Rome!} --- All accepted reduced words for~$\pi$ end at the same~state.
\end{itemize}

\begin{proposition}\label{prop:prefixesReducedExpressions}
The set of reduced words accepted by~$\automatonU(j)$ (resp.~$\automatonD(j)$) is closed by prefix.
\end{proposition}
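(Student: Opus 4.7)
The plan is to exploit a single structural feature of both automata: the dead states forming the bottom row are \emph{absorbing}. First I would inspect \cref{fig:automataRecursive,fig:automataComplete} to confirm that no arrow drawn in the construction leaves a dead state; combined with the convention that all missing transitions are self-loops, this means that every transition out of a dead state returns to the same dead state. Consequently, once a run of the automaton enters a dead state, it remains in a dead state forever.

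With this observation in hand, the proof becomes immediate. Given a reduced word~$w = s_{i_1} \cdots s_{i_\ell}$ accepted by~$\automatonU(j)$ and a prefix~$w' = s_{i_1} \cdots s_{i_k}$ with~$k \le \ell$, I would first recall that~$w'$ is itself a reduced expression (prefixes of reduced expressions in~$\fS_n$ are always reduced), so asking whether it is accepted is meaningful. If $w'$ led the automaton to a dead state, then by the absorbing property above the full word~$w$ would also end in a dead state, contradicting the acceptance of~$w$. Hence $w'$ is accepted.

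The argument for~$\automatonD(j)$ is strictly symmetric and does not require a separate treatment. I do not anticipate any real obstacle; the only mildly delicate point is to verify that the dead states introduced at each stage of the recursion in \cref{fig:automataRecursive} never acquire outgoing transitions to accepting states when the recursion continues, but this is transparent from the picture since subsequent recursive steps only add new states and transitions between them, leaving all previously drawn dead states untouched.
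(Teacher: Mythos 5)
Your argument is correct and is essentially the paper's own proof: the paper likewise combines the fact that prefixes of reduced words are reduced with the structural observation that a run can never leave the non-accepting (dead) states once it enters them, which it phrases dually as the accepting states being connected and containing the initial state. Nothing is missing.
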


\begin{proof}
This immediately follows from the fact that the set of reduced words is closed by prefix, and that the set of accepting states of~$\automatonU(j)$ is connected and contains the initial state.
\end{proof}

\begin{proposition}\label{prop:algorithm}
Let $\ell \in [n-1]$ be distinct from $j-1$ (resp.~$j$).
A permutation $\pi \in \fS_n$ that avoids $jki$ (resp.~$kij$) and reverses $\ell$ and $\ell+1$ admits a reduced word starting with $s_\ell$ and accepted by~$\automatonU(j)$ (resp.~$\automatonD(j)$).
\end{proposition}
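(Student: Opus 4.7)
The plan is to write $\pi = s_\ell \cdot \tau$ with $\length(\tau) = \length(\pi) - 1$, which is possible because $\pi$ reverses~$\ell$ and~$\ell+1$. I will then produce an accepted reduced expression~$w$ for~$\tau$ and simply prepend~$s_\ell$; the new expression automatically starts with~$s_\ell$, and the real content is to check that reading~$s_\ell$ from the initial state of~$\automatonU(j)$ does not break acceptance. There are two subcases for~$\ell$, namely $\ell \notin \{j-1, j\}$ and $\ell = j$, which I treat separately.

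For $\ell \notin \{j-1, j\}$, the transposition~$s_\ell$ fixes~$j$ pointwise and permutes~$[j-1]$ and~$[n] \ssm [j]$ internally, so it satisfies the hypotheses on~$\sigma$ in~\cref{lem:vincent2}. Part~(2) of that lemma then gives that~$\tau$ also avoids~$jki$, and~\cref{thm:patternAvoidance} supplies the desired reduced expression~$w$ for~$\tau$ accepted by~$\automatonU(j)$. Since the only non-loop transitions from the initial state of~$\automatonU(j)$ are~$s_{j-1}$ and~$s_j$ (see~\cref{fig:automataComplete}), the letter~$s_\ell$ is a self-loop at the initial state, so $s_\ell \cdot w$ remains accepted.

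For $\ell = j$, the permutation~$\tau = s_j \cdot \pi$ no longer reverses~$j$ and~$j+1$, so~\cref{lem:vincent4}\,(2) gives that~$\tau$ avoids the subword~$(j+1)ki$. Applying~\cref{thm:patternAvoidance} with~$j+1$ in place of~$j$ produces an accepted reduced expression for~$\tau$ in~$\automatonU(j+1)$, and~\cref{lem:vincent4}\,(1) then shows that prepending~$s_j$ yields a reduced expression of~$\pi$ accepted by~$\automatonU(j)$ and starting with~$s_j$. The $\automatonD(j)$ statement follows by the symmetric argument, swapping the roles of~$s_{j-1}$ and~$s_j$ (and of~$jki$ and~$kij$). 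The one mild point to verify is the boundary~$j = n-1$, where~$\automatonU(n)$ arises and~\cref{thm:patternAvoidance} must be read with the vacuous subword condition~$nki$; this is harmless since~$s_n$ does not appear in any element of~$\fS_n$, so I expect no serious obstacle in the proof overall.
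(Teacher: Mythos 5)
Your proposal is correct and follows essentially the same route as the paper: the decomposition $\pi = s_\ell \cdot \tau$, the case split on $\ell = j$ versus $\ell \notin \{j-1,j\}$, and the use of \cref{lem:vincent2}, \cref{lem:vincent4}, and \cref{thm:patternAvoidance} are exactly the paper's argument. Your explicit observation that $s_\ell$ loops at the initial state is just the content of \cref{lem:vincent2}\,(1) made concrete, and your remark on the boundary case $\automatonU(n)$ is a harmless point the paper also leaves implicit.
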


\begin{proof}
Since $\pi$ reverses $\ell$ and $\ell+1$, it admits a reduced word of the form~$\pi = s_\ell \cdot \tau$. Now consider two cases depending on the value of $\ell$:
\begin{itemize}
	\item If $\ell = j$, then $\tau$ avoids $(j+1)ki$ by \cref{lem:vincent4}\,(2). Hence, $\tau$ has a reduced word accepted by $\automatonU(j+1)$ by \cref{thm:patternAvoidance}, and we conclude by \cref{lem:vincent4}\,(1).
	\item Otherwise, $\ell$ is neither $j-1$ nor $j$, so that $\tau$ avoids $jki$ by \cref{lem:vincent2}\,(2). Hence, $\tau$ has a reduced word accepted by $\automatonU(j)$ by \cref{thm:patternAvoidance}, and we conclude by \cref{lem:vincent2}\,(1).
	\qedhere
\end{itemize}
\end{proof}

\begin{proposition}\label{prop:sameStateAcceptedReducedExpressions}
Given a permutation $\pi \in \fS_n$, all the reduced words for~$\pi$ accepted by $\automatonU(j)$ (resp.~$\automatonD(j)$) end at the same state.
\end{proposition}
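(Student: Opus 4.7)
The plan is to prove the statement by strong induction on $\length(\pi)$, with the inductive hypothesis taken uniformly across all values of $j$. The base case $\length(\pi) = 0$ is immediate, since the only reduced expression is empty and leaves the automaton in its initial state. For the inductive step, let $w = s_a w_1$ and $w' = s_b w_1'$ be two accepted reduced expressions of $\pi$; the goal is to show they end at the same state of $\automatonU(j)$.

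If $a = b$, the two expressions are in the same state after reading the first letter, and the suffixes $w_1, w_1'$ are accepted reduced expressions of the shorter permutation $s_a \cdot \pi$. When $a = j-1$ the common state is ill $j-1$, from which the only non-loop transition is $s_j$ leading to the dead state; so both suffixes must avoid $s_j$ altogether and remain at ill $j-1$, giving the conclusion directly. When $a = j$ the state is healthy $j$, which is the initial state of $\automatonU(j+1)$, and the inductive hypothesis applied to $s_j \cdot \pi$ in $\automatonU(j+1)$ concludes. When $a \notin \{j-1, j\}$ the state is unchanged from the initial, and the inductive hypothesis applied to $s_a \cdot \pi$ in $\automatonU(j)$ concludes.

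The heart of the argument is the case $a \neq b$. Both $s_a$ and $s_b$ are then left descents of $\pi$, so I form a bridge: let $v$ be a reduced expression for $s_a s_b \cdot \pi$ if $|a - b| \ge 2$, or for $s_a s_b s_a \cdot \pi$ if $|a - b| = 1$. In each case, both $s_a s_b v$ and $s_b s_a v$ (respectively $s_a s_b s_a v$ and $s_b s_a s_b v$) are reduced expressions for $\pi$. A direct inspection of $\automatonU(j)$ shows that commutation moves preserve the entire automaton trajectory (at every state at most two ``active'' letters exist, and they are consecutive $s_M, s_{M+1}$), and that braid moves likewise preserve the resulting state, with the sole exception of $s_M s_{M+1} s_M \leftrightarrow s_{M+1} s_M s_{M+1}$ applied at a healthy state in column $M$: in that case the $s_M s_{M+1} s_M$ side drives the automaton through ill $M$ to the absorbing dead $M$ state. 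Applied at the initial state healthy $j-1$, this exception corresponds exactly to $\{a, b\} = \{j-1, j\}$. Outside this configuration, the two bridges end at the same state, and invoking the $a = b$ case of the induction on $(w, \text{bridge starting with } s_a)$ and on $(w', \text{bridge starting with } s_b)$ gives that $w$ and $w'$ end at that common state.

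The exception $\{a, b\} = \{j-1, j\}$ cannot arise when $w$ and $w'$ are both accepted: an accepted expression starting with $s_{j-1}$ enters ill $j-1$ and blocks every subsequent $s_j$, forcing $\pi([j]) = [j]$; conversely, an accepted expression starting with $s_j$ requires $s_j$ to be a left descent of $\pi$, hence $\pi([j]) \neq [j]$. These conditions contradict each other. The main technical obstacle is verifying that the bridging reduced expressions themselves are accepted; this requires applying \cref{prop:algorithm} to $s_a \cdot \pi$ and $s_b \cdot \pi$ to produce accepted reduced expressions starting with the ``other'' letter, and tracking via \cref{lem:vincent2}, \cref{lem:vincent3}, and \cref{lem:vincent4} that the pattern-avoidance hypothesis is preserved under left-multiplication by $s_a$ or $s_b$ (possibly with the index $j$ shifted to $j+1$ or to $j-1$). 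This case analysis is intricate when $\{a, b\}$ partially overlaps $\{j-1, j\}$, but routine once these lemmas are at hand.
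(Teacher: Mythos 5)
Your argument is correct in outline but takes a genuinely different route from the paper. The paper does not compare two reduced expressions at all: it proves the stronger \cref{prop:sameStateAcceptedReducedExpressionsRefined}, showing by a one-pass induction on a single reduced expression that the final state is an explicit function of the permutation (the column is $\ninv_j(\pi)+1$, and the row is governed by $\ninv^j(\pi)$ and pattern containment), so all accepted expressions trivially agree. You instead run a Tits/Matsumoto-style comparison: induct on $\length(\pi)$, reduce to the case of equal first letters, and bridge two distinct first descents $s_a, s_b$ through a common reduced expression beginning with $s_as_b$ versus $s_bs_a$ (or the braid word), after checking that commutations never change the trajectory and that the only braid move changing the final state is $s_{j'-1}s_{j'}s_{j'-1}\leftrightarrow s_{j'}s_{j'-1}s_{j'}$ read from a healthy state in column $j'$ --- a configuration you correctly rule out for two \emph{accepted} expressions. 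This is essentially the route the paper explicitly mentions and declines (``it would be enough to check that any two reduced words \dots that differ by a single commutation or a single braid relation \dots''), and your version repairs the real defect of that naive plan (a chain of braid moves between two accepted words can pass through rejected words, as $s_js_{j-1}s_j$ versus $s_{j-1}s_js_{j-1}$ shows) by only ever applying one move at the front and re-entering the induction. One point deserves more care than your ``routine'': the acceptance of the bridge. \cref{prop:algorithm} explicitly excludes $\ell=j-1$, so when $a=j-1$ (or $b=j-1$) you cannot produce the accepted bridge by iterating that proposition; you must instead argue that since $w=s_{j-1}w_1$ is accepted, $s_j$ is absent from the support of $s_{j-1}\cdot\pi$, hence \emph{every} reduced expression of $\pi$ starting with $s_{j-1}$ is accepted and ends at the first ill state. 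You have this observation in your exception paragraph but do not deploy it where it is needed. With that supplied, the argument closes; what the paper's approach buys in exchange for its extra bookkeeping is the explicit identification of the terminal state, which is reused later (e.g.\ in the proofs of \cref{thm:permutreeMinimal} and \cref{coro:algorithmIntersection}).
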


To prove \cref{prop:sameStateAcceptedReducedExpressions}, it would be enough to check that any two reduced words accepted by~$\automatonU(j)$ that differ by a single commutation or a single braid relation indeed end at the same state.
However, we prefer to prove instead the following stronger but more technical version of \cref{prop:sameStateAcceptedReducedExpressions}.

\begin{proposition}\label{prop:sameStateAcceptedReducedExpressionsRefined}
For a permutation~$\pi \in \fS_n$, let~$\ninv^j(\pi) = |\set{(j,i)}{i < j \text{ and } \pi^{-1}(i) > \pi^{-1}(j)}|$ and~$\ninv_j(\pi) = |\set{(k,j)}{j < k \text{ and } \pi^{-1}(j) > \pi^{-1}(k)}|$ denote the numbers of inversions involving~$j$ as the first and second entry respectively. Then
\begin{enumerate}[(i)]
	\item if $\ninv^j(\pi) = 0$, then all reduced words for~$\pi$ end at the same healthy state of~$\automatonU(j)$,
	\item if~$\ninv_j(\pi) = 0$, then all reduced words for~$\pi$ end at the same state of~$\automatonU(j)$, which might be healthy if $\pi$ avoids~$ji$, ill if~$\pi$ contains~$ji$ but avoids~$jki$, or dead if~$\pi$ contains~$jki$,
	\item if $\ninv^j(\pi) \ne 0 \ne \ninv_j(\pi)$, all accepted reduced words for~$\pi$ end at the same ill state of~$\automatonU(j)$ while the rejected reduced words may end at distinct dead states of~$\automatonU(j)$.
\end{enumerate}
Moreover, all reduced words for~$\pi$ accepted by~$\automatonU(j)$ end in the $(\ninv_j(\pi)+1)$st column of~$\automatonU(j)$.
A similar statement holds for~$\automatonD(j)$ by exchanging $\ninv_j(\pi)$ and~$\ninv^j(\pi)$.
\end{proposition}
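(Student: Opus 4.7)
The plan is to prove the proposition by strong induction on $\length(\pi)$, using the same three-way split on the first letter of a reduced expression as in the proof of \cref{thm:patternAvoidance}. The base case $\pi = id$ is immediate: the empty reduced expression leaves the automaton at the initial healthy state, $\ninv^j(\pi) = \ninv_j(\pi) = 0$, and item~(i) applies with column index $\ninv_j(\pi)+1 = 1$.

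For the inductive step, fix any reduced expression $w = s_{i_1}\cdots s_{i_L}$ of $\pi$ and set $w' = s_{i_2}\cdots s_{i_L}$ and $\sigma = s_{i_1}\pi$. If $i_1 \notin \{j-1, j\}$, then $s_{i_1}$ loops at the initial state of $\automatonU(j)$ and \cref{lem:vincent2} guarantees that $\sigma$ has the same values of $\ninv^j$ and $\ninv_j$ as $\pi$ and the same $jki$-avoidance status, so the inductive hypothesis applied to $\sigma$ gives the claim for $w$. If $i_1 = j-1$, then $s_{j-1}$ being a left descent forces $\ninv^j(\pi) \ge 1$, excluding case~(i); the automaton passes to the ill state of column~$0$, from which it stays put unless $w'$ contains a letter $s_j$, in which case it falls into the dead state of column~$0$. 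This dichotomy coincides with whether $\sigma$ preserves $[j]$, and a short computation confirms that preservation of $[j]$ forces $\ninv_j(\pi) = 0$ together with $\pi$ avoiding $jki$, so the end state matches either case~(ii) ill or, when preservation fails, either case~(ii) dead or a rejected expression of case~(iii). Finally, if $i_1 = j$, then $s_j$ being a left descent forces $\ninv_j(\pi) \ge 1$, excluding the degenerate subcase of item~(ii); the automaton moves to the initial state of the sub-automaton $\automatonU(j+1)$, and \cref{lem:vincent4} together with a direct check from the one-line notation yields $\ninv_{j+1}(\sigma) = \ninv_j(\pi) - 1$, $\ninv^{j+1}(\sigma) = \ninv^j(\pi)$, and $\sigma$ avoids $(j+1)ki$ iff $\pi$ avoids $jki$. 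The inductive hypothesis applied to $\sigma$ with the shifted parameter $j+1$, together with a column shift of $+1$, then delivers the conclusion for $w$.

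The main obstacle is verifying that in each subcase the end state of $w$ matches exactly one of the trichotomies (i)--(iii) and, in the accepting scenarios, lies precisely in the $(\ninv_j(\pi)+1)$st column; this requires carefully tracking how the left-descent hypotheses at each case interact with the relative positions of $j-1$, $j$, $j+1$ in $\pi$. The non-uniqueness of dead states for rejected expressions in case~(iii) needs no separate argument, as it is witnessed by the fact that starting with $s_{j-1}$ forces termination at the dead state of column~$0$, whereas starting with $s_j$ can delay the fatal transition until later columns of the sub-automata.
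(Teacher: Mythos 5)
Your proposal is correct and follows essentially the same route as the paper: induction on $\length(\pi)$ with a case split on the first letter of the reduced expression ($s_\ell$ with $\ell\notin\{j-1,j\}$, $\ell=j$, $\ell=j-1$), tracking how $\ninv^j$ and $\ninv_j$ transform under left multiplication and how the automaton state moves. Your treatment of the $\ell=j-1$ branch (a direct ill-versus-dead dichotomy governed by whether the remaining suffix uses $s_j$, i.e.\ whether $s_{j-1}\pi$ stabilizes $[j]$) is in fact slightly more careful than the paper's one-line version of that case, and your recursion formulas $\ninv_{j+1}(\sigma)=\ninv_j(\pi)-1$, $\ninv^{j+1}(\sigma)=\ninv^j(\pi)$ for the $\ell=j$ case agree with the paper's observations.
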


\begin{proof}
The proof works by induction on the length of~$\pi$.
Consider an arbitrary reduced word~$w$ for~$\pi$, starting with a transposition~$s_\ell$, and write~$w = s_\ell \cdot w'$ and~$\pi = s_\ell \cdot \tau$.
Observe~that:
\begin{itemize}
	\item if $\ell \notin \{j-1, j\}$, then~$s_\ell$ loops in~$\automatonU(j)$, $\ninv^j(\pi) = \ninv^j(\tau)$ and $\ninv_j(\pi) = \ninv_j(\tau)$,
	\item if~$\ell = j$, then~$s_j$ goes to~$\automatonU(j+1)$, $\ninv^j(\pi) = \ninv^{j+1}(\tau)$ and~${\ninv_j(\pi) = \ninv_{j+1}(\tau) + 1}$,
	\item if~$\ell = j-1$, then~$s_{j-1}$ goes to the first ill state of~$\automatonU(j)$, $\ninv^j(\pi) = \ninv^{j+1}(\tau) + 1$ and~$\ninv_j(\pi) = \ninv_{j+1}(\tau)$.
\end{itemize}
By induction, we obtain that the reduced word~$w'$ for~$\tau$ ends as predicted in the statement.
The previous observations ensure that the reduced word~$w$ for~$\pi$ also does.
\end{proof}

\begin{example}\label{exm:sameStateAcceptedReducedExpressionsRefined}
We present an example of each case:
\begin{enumerate}[(i)]
	\item For~$\pi \eqdef 4312$, we have that~$\ninv^2(\pi)=0$ and all of its~$5$ reduced words end at the third healthy state of $\automatonU(2)$.
	\item For~$\pi \eqdef 32145$ (resp.~$\pi \eqdef 43215$, resp.~$\pi \eqdef 43251$), we have~$\ninv_4(\pi) = 0$ and all its~$2$ (resp.~$16$, resp.~$35$) reduced words end at the first healthy (resp.~ill, resp.~dead) state~of~$\automatonU(4)$.
	\item For~$\pi \eqdef 4321$, we have~$\ninv^2(\pi) = |\{(2,1)\}| = 1$ and $\ninv_2 = |\{(3,2),(4,2)\}| = 2$. Among the~$16$ reduced words of~$\pi$, the automaton~$\automatonU(2)$ accepts~$7$ at its third ill state, rejects~$7$ at its first dead state, and rejects the other~$2$ at its second dead state.
\end{enumerate}
\end{example}

\subsection{Finding accepted reduced words}

\cref{prop:algorithm} has a strong algorithmic consequence.
Imagine we want to test whether a permutation~$\pi \in \fS_n$ is minimal in its permutree class for~$U = \{j\}$ and~$D = \varnothing$.
Of course, the quickest way is to check for all~$i < j < k$ whether~$\pi$ contains the subword~$jki$.
But since this interpretation will be lost beyond type~$A$, let us impose the use of reduced words for~$\pi$ to make this test.
While it would be \apriori{} necessary to check all reduced words on the automaton~$\automatonU(j)$, \cref{prop:algorithm} enables us to construct without loss of generality a candidate reduced word for~$\pi$ and we will just need to check that this one is accepted by~$\automatonU(j)$.
Somewhat dually, one can also construct a reduced word accepted by~$\automatonU(j)$ that is a reduced word for~$\pi$ if and only if~$\pi$ avoids~$jki$.
This is done in the following algorithm, that we call \mbox{\defn{$(\{j\}, \varnothing)$-permutree sorting}}.
The reader is invited to write down the symmetric $(\varnothing, \{j\})$-permutree sorting.
We will discuss further permutree sorting in \cref{subsec:permutreeSorting}.

\bigskip
\IncMargin{1em}
\SetKwInOut{Input}{Input}\SetKwInOut{Output}{Output}
\SetKwFor{Repeat}{repeat}{}{}
\SetKwIF{If}{ElseIf}{Else}{if}{then}{else if}{else}{}
\DontPrintSemicolon
\begin{algorithm}[H]
	\Input{a permutation $\pi \in \fS_n$ and an integer~$j \in [n]$}
	\Output{a reduced word accepted by~$\automatonU(j)$, candidate reduced word for~$\pi$}
	$w \eqdef \varepsilon$ \;
	\Repeat{}{
		\If{$\exists \; \ell \ne j-1$ such that $\ell$ and $\ell+1$ are reversed in~$\pi$}{
			$\pi \eqdef s_\ell \cdot \pi$, \quad $w \eqdef w \cdot s_\ell$ \;
			\lIf{$\ell = j$}{
				$j \eqdef j+1$
			}
		}
	}
	\If{$j-1$ and $j$ are reversed in~$\pi$}{
		$\pi \eqdef s_{j-1} \cdot \pi$, \quad $w \eqdef w \cdot s_{j-1}$ \;
		$w \eqdef w \cdot w' \cdot w''$ where~$w'$ sorts~$\pi_{[j]}$ and $w''$ sorts $\pi_{[n] \ssm [j]}$ \;
	}
	\Return $w$
	\caption{$(\{j\}, \varnothing)$-permutree sorting}
	\label{algo:shortcutsUj}
\end{algorithm}
\bigskip

\begin{example}\label{exm:algo1}
	Let us present the $(\{2\},\varnothing)$-permutree sorting algorithm in action for the permutations~$\pi_1 \eqdef 3421$ and~$\pi_2 \eqdef 4231$. The steps of the algorithm are presented in \cref{tab:algo1}. Each row contains the states of the permutation~$\pi$ and of the word~$w$ and the current values of~$j$ and~$\ell$ in use at each step. Notice that for~$\pi_1 \eqdef 3421$ the algorithm ends with the identity, which coincides with the fact that~$\pi_1 \eqdef 3421$ avoids~$2ki$. In contrast, for~$\pi_2 \eqdef 4231$ the algorithm ends with the permutation~$1243$, meaning that~$\pi_2$ is not~$(\{2\},\varnothing)$-sortable, which coincides with the fact that~$\pi_2 \eqdef 4231$ contains~$2ki$. 

	\begin{table}[h!]
		\centerline{
		\begin{tabular}[t]{l|l|c|c}
			$\pi_1$ & $w_1$ & $j_1$ & $\ell_1$ \\
			\hline
			$3421$ & $\varepsilon$ & $2$ & $2$ \\
			$2431$ & $s_2$ & $3$ & $1$ \\
			$1432$ & $s_2 \cdot s_1$ & $3$ & $3$ \\
			$1342$ & $s_2 \cdot s_1 \cdot s_3$ & $4$ & $2$ \\
			$1243$ & $s_2 \cdot s_1 \cdot s_3 \cdot s_2$ & $4$ & $3$ \\
			$1234$ & $s_2 \cdot s_1 \cdot s_3 \cdot s_2 \cdot s_3$ & $4$ &
		\end{tabular}
		\qquad\qquad
		\begin{tabular}[t]{l|l|c|c}
			$\pi_2$ & $w_2$ & $j_2$ & $\ell_2$ \\
			\hline
			$4231$ & $\varepsilon$ & $2$ & $3$ \\
			$3241$ & $s_3$ & $2$ & $2$ \\
			$2341$ & $s_3 \cdot s_2$ & $3$ & $1$ \\
			$1342$ & $s_3 \cdot s_2 \cdot s_1$ & $3$ & $2$ \\
			$1243$ & $s_3 \cdot s_2 \cdot s_1 \cdot s_2$ & $3$ &
		\end{tabular}
		}
		\caption{$(\{2\},\varnothing)$-permutree sorting of~$\pi_1 \eqdef 3421$ and~$\pi_2 \eqdef 4231$.}
		\label{tab:algo1}
	\end{table}
\end{example}               

\begin{corollary}\label{coro:algorithm}
For any permutation~$\pi$ and~$j \in \{2, \dots, n-1\}$, \cref{algo:shortcutsUj} returns a reduced~word~$w$ accepted by $\automatonU(j)$ with the property that~$w$ is a reduced word for~$\pi$ if and only if~$\pi$ avoids~$jki$.
\end{corollary}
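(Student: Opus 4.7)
The plan is to establish both conclusions---that $w$ is accepted by $\automatonU(j)$ and that $w$ is a reduced expression for $\pi$ iff $\pi$ avoids $jki$---by maintaining two invariants as \cref{algo:shortcutsUj} runs. Write $\pi_0$ and $j_0$ for the initial inputs and $\pi$, $j$ for the running values of the algorithmic variables. The first invariant is that $w \cdot \pi = \pi_0$ throughout and that $w$ is a reduced word: this is automatic, because a letter $s_\ell$ is appended to $w$ only when $\ell$ and $\ell+1$ are reversed in $\pi$, so the left multiplication decreases $\length(\pi)$ by~$1$. In particular, $w$ is a reduced expression for $\pi_0$ if and only if the final value of $\pi$ is the identity.

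The second invariant is that, throughout the main loop, $w$ ends at the healthy state of $\automatonU(j_0)$ corresponding to the current value of $j$. This is preserved because the excluded letter $s_{j-1}$ is the only non-loop transition leaving that state towards the ill row, while the horizontal transition labeled $s_j$ to the next healthy state is exactly matched by the update $j \eqdef j+1$. When the final $s_{j-1}$ block fires, $w$ steps from this healthy state to the adjacent ill state (both accepting); the sub-sortings $w'$ and $w''$ then only employ transpositions $s_\ell$ with $\ell \in [n-1] \ssm \{j\}$, each of which self-loops at that ill state (the unique non-loop transition out of it is labeled $s_j$). Hence $w$ is always accepted by $\automatonU(j_0)$.

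The ``only if'' direction of the second claim follows immediately from \cref{thm:patternAvoidance}: if $\pi_0$ contains $j_0 k i$, then no reduced expression for $\pi_0$ is accepted by $\automatonU(j_0)$, so $w$ cannot be one. For the converse, assume $\pi_0$ avoids $j_0 k i$; I would preserve throughout the algorithm the invariant that the current $\pi$ avoids $jki$ (for the current~$j$), using \cref{lem:vincent2}\,(2) when $\ell \notin \{j-1, j\}$ (with $\sigma \eqdef s_\ell$ satisfying the fixing hypotheses of the lemma) and \cref{lem:vincent4}\,(2) applied to $\tau \eqdef s_j \cdot \pi$ when $\ell = j$ (this upgrades $jki$-avoidance to $(j+1)ki$-avoidance, matching the simultaneous increment of~$j$).

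The only genuinely combinatorial step---and the one I expect to be the main obstacle---is to show that after the main loop exits, the subsequent $s_{j-1}$ block truly drives $\pi$ to the identity. At that point $\pi$ is a shuffle of the two increasing words $(1, \dots, j-1)$ and $(j, \dots, n)$, so if $j-1$ and $j$ are not reversed then $\pi = \mathrm{id}$ directly. Otherwise, writing $p_i \eqdef \pi^{-1}(i)$, we have $p_1 < \cdots < p_{j-1}$, $p_j < \cdots < p_n$, and $p_j < p_{j-1}$; combined with the rewriting of $jki$-avoidance as ``$p_i < p_k$ for all $i < j < k$'', a short double count of the $p_{j-1}$ positions $\{1, \dots, p_{j-1}\}$ against the $j$ values $\{1, \dots, j\}$ that must fit there forces $p_{j-1} = j$, and hence that $\{1, \dots, j\}$ sits at exactly the positions $\{1, \dots, j\}$. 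This clean split is preserved by the left multiplication by $s_{j-1}$, so $\pi_{[j]}$ and $\pi_{[n] \ssm [j]}$ are honest sub-permutations on disjoint blocks of positions and are sorted to the identity by $w'$ and $w''$, giving the required $\pi = \mathrm{id}$.
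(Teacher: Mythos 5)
Your proof is correct and follows essentially the same strategy as the paper's: track the state of~$\automatonU(j)$ through the algorithm to get acceptance, use \cref{thm:patternAvoidance} for the ``only if'' direction, and maintain an avoidance invariant through the main loop for the converse. The only real difference is that where the paper appeals to \cref{prop:algorithm} and dismisses lines 6--8 as ``forced,'' you unfold the invariant directly from \cref{lem:vincent2}\,(2) and \cref{lem:vincent4}\,(2) and supply an explicit counting argument showing that $jki$-avoidance forces the values~$[j]$ to occupy the positions~$[j]$ when the final block fires --- a detail the paper leaves implicit (and only hints at in the remark following the corollary), so your version is, if anything, more complete.
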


\begin{proof}
This algorithm constructs a candidate reduced word for $\pi$ while following the automaton~$\automatonU(j)$ and prioritizing healthy states over ill states.
Lines 2 to 5 start by all possible transitions~$s_\ell$ that remain in healthy states, updating~$j$ to~$j+1$ when~$\ell = j$ according to \cref{lem:vincent4} (if condition at line 5).
When we have exhausted all these transitions, if we need to go to an ill state (if condition at line 6) applying~$s_{j-1}$, then we are not anymore allowed to use~$s_j$ and we obtain a candidate reduced word by sorting independently the first $j$ positions of~$\pi$ with a reduced word in~$\{s_1, \dots, s_{j-1}\}^*$ and the last $[n] \ssm [j]$ positions of~$\pi$ with a reduced word in~$\{s_{j+1}, \dots, s_{n-1}\}^*$.
The resulting reduced word~$w$ is clearly accepted by~$\automatonU(j)$ because we never allow the transition from an ill state to the corresponding dead state.
If~$w$ is a reduced word for~$\pi$, then $\pi$ avoids~$jki$ by \cref{thm:patternAvoidance}.
Conversely, if~$\pi$ avoids~$jki$, then $w$ must be a reduced word for~$\pi$ since the choice to start with~$s_\ell$ is valid in lines 2 to 5 by \cref{prop:algorithm} and forced in lines 6 to 8 (since all reduced words of~$\pi$ then start by~$s_\ell$).
\end{proof}

\begin{remark}
We really wrote \cref{algo:shortcutsUj} as a sorting algorithm.
It first tries to sort the permutation~$\pi \in \fS_n$ while avoiding to swap~$j-1$ and~$j$ for a certain token~$j$ (and changing the token when swapping~$j$ and~$j+1$).
Once it is forced to swap~$j-1$ and~$j$, it tries to sort the permutation~$\pi$ while avoiding to swap any value of~$[j]$ with a value of~$[n] \ssm [j]$.
If we were only interested in deciding whether the permutation~$\pi$ is $(\{j\}, \varnothing)$-sortable, then we could stop and accept the permutation as soon as we reach $j = n$, and we could just check at line 8 of the algorithm whether~$\pi([j]) = [j]$ and~$\pi([n] \ssm [j]) = [n] \ssm [j]$.
\end{remark}

\subsection{Generating trees on accepted reduced words}\label{subsec:trees}

\cref{prop:prefixesReducedExpressions,prop:sameStateAcceptedReducedExpressions} also have a relevant consequence, more combinatorial this time.
Namely, they naturally define generating trees for the $(\{j\}, \varnothing)$-permutree minimal permutations, following certain special reduced words for them.
To construct these trees, pick an arbitrary priority order~$\prec$ on~$\{s_1, \dots, s_{n-1}\}$.
For a $(\{j\}, \varnothing)$-permutree minimal permutation~$\pi \in \fS_n$, denote by~$\pi(\{j\}, \varnothing, \prec)$ the $\prec$-lexicographic minimal reduced word for~$\pi$ that is accepted by~$\automatonU(j)$.
Denote by~$\lexmin(n, \{j\}, \varnothing, \prec)$ the set of reduced words of the form~$\pi(\{j\}, \varnothing, \prec)$ for all $(\{j\}, \varnothing)$-permutree minimal permutations~$\pi \in \fS_n$.
The following statement is an analogue of \cref{prop:prefixesReducedExpressions}.

\begin{proposition}\label{prop:generatingTrees}
The set $\lexmin(n, \{j\}, \varnothing, \prec)$ is closed by prefix.
\end{proposition}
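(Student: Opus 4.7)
The plan is to argue by contradiction, using \cref{prop:prefixesReducedExpressions} to ensure prefixes stay accepted and \cref{prop:sameStateAcceptedReducedExpressions} to allow us to splice in a lex-smaller prefix without breaking acceptance. Fix $w = \pi(\{j\}, \varnothing, \prec) \in \lexmin(n, \{j\}, \varnothing, \prec)$ and a prefix~$w'$ of~$w$, and write $w = w' \cdot w''$. Since $w$ is a reduced expression for~$\pi$, the prefix~$w'$ is a reduced expression for some permutation~$\sigma$ and $w''$ is a reduced expression for~$\sigma^{-1} \pi$. The goal is to show $w' = \sigma(\{j\}, \varnothing, \prec)$, which will place $w'$ into~$\lexmin(n, \{j\}, \varnothing, \prec)$.

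First, I would invoke \cref{prop:prefixesReducedExpressions} to conclude that $w'$, being a prefix of the accepted word~$w$, is itself accepted by~$\automatonU(j)$. In particular $\sigma$ is $(\{j\},\varnothing)$-permutree minimal by \cref{thm:patternAvoidance}, and $\sigma(\{j\}, \varnothing, \prec)$ is well defined. Now suppose for contradiction that $w' \ne \sigma(\{j\}, \varnothing, \prec)$, so that there exists a reduced expression~$v'$ for~$\sigma$ accepted by~$\automatonU(j)$ and $\prec$-lex strictly smaller than~$w'$.

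The key step is to show that $v' \cdot w''$ is a reduced expression for~$\pi$, accepted by~$\automatonU(j)$, and $\prec$-lex strictly smaller than~$w$, contradicting the choice of~$w$ as $\pi(\{j\}, \varnothing, \prec)$. That $v' \cdot w''$ is reduced for~$\pi$ is immediate from the length count $\length(v') + \length(w'') = \length(w') + \length(w'') = \length(\pi)$ combined with the product identity $\sigma \cdot (\sigma^{-1} \pi) = \pi$. The lex comparison is also straightforward: since~$v'$ and~$w'$ have the same length, if they first differ at position~$i$ with $v'_i \prec w'_i$, then $v' \cdot w''$ and~$w' \cdot w''$ first differ at that same position~$i$, so $v' \cdot w'' \prec_{\text{lex}} w' \cdot w'' = w$.

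The main obstacle, and the reason the earlier technical results were established, is verifying that $v' \cdot w''$ is accepted by~$\automatonU(j)$: this is exactly what \cref{prop:sameStateAcceptedReducedExpressions} delivers. Indeed, $v'$ and~$w'$ are both accepted reduced expressions for the same permutation~$\sigma$, hence end at the same state of~$\automatonU(j)$. Reading~$w''$ from this common state therefore produces the same sequence of transitions in both cases, and so $v' \cdot w''$ ends at the same state of~$\automatonU(j)$ as~$w' \cdot w'' = w$ does, which is an accepting state. This yields the desired contradiction and hence the proposition.
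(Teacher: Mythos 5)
Your proof is correct and follows essentially the same route as the paper's: it uses \cref{prop:prefixesReducedExpressions} to see that the prefix is accepted, and \cref{prop:sameStateAcceptedReducedExpressions} to splice a $\prec$-lexicographically smaller accepted expression for the prefix's permutation into the full word while preserving acceptance. The only difference is that you argue by contradiction where the paper argues by contraposition, which is the same argument in different clothing.
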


\begin{proof}
Consider a reduced word~$w = u \cdot v$ where~$u$ is not in~$\lexmin(n, \{j\}, \varnothing, \prec)$.
If~$u$ is not accepted by~$\automatonU(j)$, neither is~$w$ by \cref{prop:prefixesReducedExpressions}.
Otherwise, there exists a reduced word~$u'$ representing the same permutation as~$u$, accepted by~$\automatonU(j)$ and $\prec$-lexicographic smaller than~$u$.
By \cref{prop:sameStateAcceptedReducedExpressions}, the reduced words~$u$ and~$u'$ end at the same state of~$\automatonU(j)$.
Therefore, if~$w = u \cdot v$ is accepted by~$\automatonU(j)$, so is~$u' \cdot v$.
Since~$u' \cdot v$ is $\prec$-lexicographically smaller than~$u \cdot v$ and represents the same permutation, this ensures that~$w$ is not in~$\lexmin(n, \{j\}, \varnothing, \prec)$.
\end{proof}

\cref{prop:generatingTrees} yields a natural generating tree for $\lexmin(n, \{j\}, \varnothing, \prec)$ where the parent of a reduced word~$w$ is obtained by deleting its last letter.
Replacing each reduced word by the corresponding permutation, this provides a generating tree for the $(\{j\}, \varnothing)$-permutree minimal permutations of~$\fS_n$.
Of course there is a similar generating tree for the $(\varnothing, \{j\})$-permutree minimal permutations of~$\fS_n$.
\cref{fig:TreePartialOrientations} presents these generating trees for~$n = 4$ and~$j = 2, 3$, with the priority order~${s_1 \prec s_2 \prec s_3}$.
It is natural to draw these trees on top of the Hasse diagram of the right weak order on permutations, defined by inclusion of inversion sets.
In other words, the cover relations in weak order correspond to the swap of the values at two consecutive positions in a permutation, \ie to a right multiplication by a simple transposition.
The edges of the trees corresponding to the right multiplications by $s_1$, $s_2$ and $s_3$ are colored by blue, red, and green respectively.

\hvFloat[floatPos=p, capWidth=h, capPos=r, capAngle=90, objectAngle=90, capVPos=c, objectPos=c]{figure}
{
	\begin{tabular}{cccc}
		$\automatonU(2)$
		&
		$\automatonD(2)$
		&
		$\automatonU(3)$
		&
		$\automatonD(3)$
		\\[.5cm]
		\begin{tikzpicture}[shorten >=1pt, node distance=2cm, on grid, auto]
			\node[state,initial,accepting,minimum size=0.5cm] (hj)   {}; 
			\node[state,accepting,minimum size=0.5cm] (ij) [below= 1.5cm of hj] {}; 
			\node[state,minimum size=0.5cm] (dj) [below= 1.5cm of ij] {}; 
			\node[state,accepting,minimum size=0.5cm] (hj+1) [right= 1.5cm of hj] {};
			\node[state,accepting,minimum size=0.5cm] (ij+1) [below= 1.5cm of hj+1] {}; 
			\node[state,minimum size=0.5cm] (dj+1) [below= 1.5cm of ij+1] {}; 
			\node[state,accepting,minimum size=0.5cm] (hj+2) [right= 1.5cm of hj+1] {};
			\node[state,accepting,minimum size=0.5cm] (ij+2) [below= 1.5cm of hj+2] {}; 
			\path[->] 
				(hj) edge node [swap] {$s_1$} (ij)
					   edge node {$s_2$} (hj+1)
				(ij) edge node [swap] {$s_2$} (dj)
				(hj+1) edge node [swap] {$s_2$} (ij+1)
					 edge node {$s_3$} (hj+2)
				(ij+1) edge node [swap] {$s_3$} (dj+1)
				(hj+2) edge node [swap] {$s_3$} (ij+2);
		\end{tikzpicture}
		&
		\begin{tikzpicture}[shorten >=1pt, node distance=2cm, on grid, auto]
			\node[state,initial,accepting,minimum size=0.5cm] (hj)   {}; 
			\node[state,accepting,minimum size=0.5cm] (ij) [below= 1.5cm of hj] {}; 
			\node[state,minimum size=0.5cm] (dj) [below= 1.5cm of ij] {}; 
			\node[state,accepting,minimum size=0.5cm] (hj-1) [right= 1.5cm of hj] {};
			\node[state,accepting,minimum size=0.5cm] (ij-1) [below= 1.5cm of hj-1] {}; 
			\path[->] 
				(hj) edge node [swap] {$s_2$} (ij)
					   edge node {$s_1$} (hj-1)
				(ij) edge node [swap] {$s_1$} (dj)
				(hj-1) edge node [swap] {$s_1$} (ij-1);
		\end{tikzpicture}
		&
		\begin{tikzpicture}[shorten >=1pt, node distance=2cm, on grid, auto]
			\node[state,initial,accepting,minimum size=0.5cm] (hj)   {}; 
			\node[state,accepting,minimum size=0.5cm] (ij) [below= 1.5cm of hj] {}; 
			\node[state,minimum size=0.5cm] (dj) [below= 1.5cm of ij] {}; 
			\node[state,accepting,minimum size=0.5cm] (hj+1) [right= 1.5cm of hj] {};
			\node[state,accepting,minimum size=0.5cm] (ij+1) [below= 1.5cm of hj+1] {}; 
			\path[->] 
				(hj) edge node [swap] {$s_2$} (ij)
					   edge node {$s_3$} (hj+1)
				(ij) edge node [swap] {$s_3$} (dj)
				(hj+1) edge node [swap] {$s_3$} (ij+1);
		\end{tikzpicture}
		&
		\begin{tikzpicture}[shorten >=1pt, node distance=2cm, on grid, auto]
			\node[state,initial,accepting,minimum size=0.5cm] (hj)   {}; 
			\node[state,accepting,minimum size=0.5cm] (ij) [below= 1.5cm of hj] {}; 
			\node[state,minimum size=0.5cm] (dj) [below= 1.5cm of ij] {}; 
			\node[state,accepting,minimum size=0.5cm] (hj-1) [right= 1.5cm of hj] {};
			\node[state,accepting,minimum size=0.5cm] (ij-1) [below= 1.5cm of hj-1] {}; 
			\node[state,minimum size=0.5cm] (dj-1) [below= 1.5cm of ij-1] {}; 
			\node[state,accepting,minimum size=0.5cm] (hj-2) [right= 1.5cm of hj-1] {};
			\node[state,accepting,minimum size=0.5cm] (ij-2) [below= 1.5cm of hj-2] {}; 
			\path[->] 
				(hj) edge node [swap] {$s_3$} (ij)
					   edge node {$s_2$} (hj-1)
				(ij) edge node [swap] {$s_2$} (dj)
				(hj-1) edge node [swap] {$s_2$} (ij-1)
					 edge node {$s_1$} (hj-2)
				(ij-1) edge node [swap] {$s_1$} (dj-1)
				(hj-2) edge node [swap] {$s_1$} (ij-2);
		\end{tikzpicture}
		\\[.5cm]
    	\begin{tikzpicture}[xscale=.9, yscale=0.7, color=lightgray]
    		\node[blue](P1234) at (0,0){1234};
    		\node[blue](P2134) at (-1,1.5){2134};
    		\node[blue](P1324) at ( 0,1.5){1324};
    		\node[blue](P1243) at ( 1,1.5){1243};
    		\node(P2314) at (-2,3){2314};
    		\node[blue](P3124) at (-1,3){3124};
    		\node[blue](P2143) at ( 0,3){2143};
    		\node[blue](P1342) at ( 1,3){1342};
    		\node[blue](P1423) at ( 2,3){1423};
    		\node[blue](P3214) at (-2.5,4.5){3214};
    		\node(P2341) at (-1.5,4.5){2341};
    		\node[blue](P3142) at (-0.5,4.5){3142};
    		\node(P2413) at ( 0.5,4.5){2413};
    		\node[blue](P4123) at ( 1.5,4.5){4123};
    		\node[blue](P1432) at ( 2.5,4.5){1432};
    		\node(P3241) at (-2,6){3241};
    		\node(P2431) at (-1,6){2431};
    		\node[blue](P3412) at ( 0,6){3412};
    		\node[blue](P4213) at ( 1,6){4213};
    		\node[blue](P4132) at ( 2,6){4132};
    		\node[blue](P3421) at (-1,7.5){3421};
    		\node(P4231) at ( 0,7.5){4231};
    		\node[blue](P4312) at ( 1,7.5){4312};
    		\node[blue](P4321) at (0,9){4321};
    		\draw[line width=0.5mm,blue](P1234) -- (P2134);
    		\draw[line width=0.5mm,red](P1234) -- (P1324);
    		\draw[line width=0.5mm,green](P1234) -- (P1243);
    		\draw(P2134) -- (P2314);
    		\draw[line width=0.5mm,green](P2134) -- (P2143);
    		\draw[line width=0.5mm,blue](P1324) -- (P3124);
    		\draw[line width=0.5mm,green](P1324) -- (P1342);
    		\draw(P1243) -- (P2143);
    		\draw[line width=0.5mm,red](P1243) -- (P1423);
    		\draw(P2314) -- (P3214);
    		\draw(P2314) -- (P2341);
    		\draw[line width=0.5mm,red](P3124) -- (P3214);
    		\draw[line width=0.5mm,green](P3124) -- (P3142);
    		\draw(P2143) -- (P2413);
    		\draw(P1342) -- (P3142);
    		\draw[line width=0.5mm,red](P1342) -- (P1432);
    		\draw[line width=0.5mm,blue](P1423) -- (P4123);
    		\draw(P1423) -- (P1432);
    		\draw(P3214) -- (P3241);
    		\draw(P2341) -- (P3241);
    		\draw(P2341) -- (P2431);
    		\draw[line width=0.5mm,red](P3142) -- (P3412);
    		\draw(P2413) -- (P4213);
    		\draw(P2413) -- (P2431);
    		\draw[line width=0.5mm,red](P4123) -- (P4213);
    		\draw(P4123) -- (P4132);
    		\draw[line width=0.5mm,blue](P1432) -- (P4132);
    		\draw(P3241) -- (P3421);
    		\draw(P2431) -- (P4231);
    		\draw[line width=0.5mm,blue](P3412) -- (P4312);
    		\draw[line width=0.5mm,green](P3412) -- (P3421);
    		\draw(P4213) -- (P4231);
    		\draw(P4132) -- (P4312);
    		\draw(P3421) -- (P4321);
    		\draw(P4231) -- (P4321);
    		\draw[line width=0.5mm,green](P4312) -- (P4321);	
    	\end{tikzpicture}
    	&
    	\begin{tikzpicture}[xscale=.9, yscale=0.7, color=lightgray]
    		\node[blue](P1234) at (0,0){1234};
    		\node[blue](P2134) at (-1,1.5){2134};
    		\node[blue](P1324) at ( 0,1.5){1324};
    		\node[blue](P1243) at ( 1,1.5){1243};
    		\node[blue](P2314) at (-2,3){2314};
    		\node(P3124) at (-1,3){3124};
    		\node[blue](P2143) at ( 0,3){2143};
    		\node[blue](P1342) at ( 1,3){1342};
    		\node[blue](P1423) at ( 2,3){1423};
    		\node[blue](P3214) at (-2.5,4.5){3214};
    		\node[blue](P2341) at (-1.5,4.5){2341};
    		\node(P3142) at (-0.5,4.5){3142};
    		\node[blue](P2413) at ( 0.5,4.5){2413};
    		\node(P4123) at ( 1.5,4.5){4123};
    		\node[blue](P1432) at ( 2.5,4.5){1432};
    		\node[blue](P3241) at (-2,6){3241};
    		\node[blue](P2431) at (-1,6){2431};
    		\node(P3412) at ( 0,6){3412};
    		\node[blue](P4213) at ( 1,6){4213};
    		\node(P4132) at ( 2,6){4132};
    		\node[blue](P3421) at (-1,7.5){3421};
    		\node[blue](P4231) at ( 0,7.5){4231};
    		\node(P4312) at ( 1,7.5){4312};
    		\node[blue](P4321) at (0,9){4321};
    		\draw[line width=0.5mm,blue](P1234) -- (P2134);
    		\draw[line width=0.5mm,red](P1234) -- (P1324);
    		\draw[line width=0.5mm,green](P1234) -- (P1243);
    		\draw[line width=0.5mm,red](P2134) -- (P2314);
    		\draw[line width=0.5mm,green](P2134) -- (P2143);
    		\draw(P1324) -- (P3124);
    		\draw[line width=0.5mm,green](P1324) -- (P1342);
    		\draw(P1243) -- (P2143);
    		\draw[line width=0.5mm,red](P1243) -- (P1423);
    		\draw[line width=0.5mm,blue](P2314) -- (P3214);
    		\draw[line width=0.5mm,green](P2314) -- (P2341);
    		\draw(P3124) -- (P3214);
    		\draw(P3124) -- (P3142);
    		\draw[line width=0.5mm,red](P2143) -- (P2413);
    		\draw(P1342) -- (P3142);
    		\draw[line width=0.5mm,red](P1342) -- (P1432);
    		\draw(P1423) -- (P4123);
    		\draw(P1423) -- (P1432);
    		\draw[line width=0.5mm,green](P3214) -- (P3241);
    		\draw(P2341) -- (P3241);
    		\draw[line width=0.5mm,red](P2341) -- (P2431);
    		\draw(P3142) -- (P3412);
    		\draw[line width=0.5mm,blue](P2413) -- (P4213);
    		\draw(P2413) -- (P2431);
    		\draw(P4123) -- (P4213);
    		\draw(P4123) -- (P4132);
    		\draw(P1432) -- (P4132);
    		\draw[line width=0.5mm,red](P3241) -- (P3421);
    		\draw[line width=0.5mm,blue](P2431) -- (P4231);
    		\draw(P3412) -- (P4312);
    		\draw(P3412) -- (P3421);
    		\draw(P4213) -- (P4231);
    		\draw(P4132) -- (P4312);
    		\draw[line width=0.5mm,blue](P3421) -- (P4321);
    		\draw(P4231) -- (P4321);
    		\draw(P4312) -- (P4321);
    	\end{tikzpicture}
		&
    	\begin{tikzpicture}[xscale=.9, yscale=0.7, color=lightgray]
			\node[blue](P1234) at (0,0){1234};
			\node[blue](P2134) at (-1,1.5){2134};
			\node[blue](P1324) at ( 0,1.5){1324};
			\node[blue](P1243) at ( 1,1.5){1243};
			\node[blue](P2314) at (-2,3){2314};
			\node[blue](P3124) at (-1,3){3124};
			\node[blue](P2143) at ( 0,3){2143};
			\node(P1342) at ( 1,3){1342};
			\node[blue](P1423) at ( 2,3){1423};
			\node[blue](P3214) at (-2.5,4.5){3214};
			\node(P2341) at (-1.5,4.5){2341};
			\node(P3142) at (-0.5,4.5){3142};
			\node[blue](P2413) at ( 0.5,4.5){2413};
			\node[blue](P4123) at ( 1.5,4.5){4123};
			\node[blue](P1432) at ( 2.5,4.5){1432};
			\node(P3241) at (-2,6){3241};
			\node[blue](P2431) at (-1,6){2431};
			\node(P3412) at ( 0,6){3412};
			\node[blue](P4213) at ( 1,6){4213};
			\node[blue](P4132) at ( 2,6){4132};
			\node(P3421) at (-1,7.5){3421};
			\node[blue](P4231) at ( 0,7.5){4231};
			\node[blue](P4312) at ( 1,7.5){4312};
			\node[blue](P4321) at (0,9){4321};
			\draw[line width=0.5mm,blue](P1234) -- (P2134);
			\draw[line width=0.5mm,red](P1234) -- (P1324);
			\draw[line width=0.5mm,green](P1234) -- (P1243);
			\draw[line width=0.5mm,red](P2134) -- (P2314);
			\draw[line width=0.5mm,green](P2134) -- (P2143);
			\draw[line width=0.5mm,blue](P1324) -- (P3124);
			\draw(P1324) -- (P1342);
			\draw(P1243) -- (P2143);
			\draw[line width=0.5mm,red](P1243) -- (P1423);
			\draw[line width=0.5mm,blue](P2314) -- (P3214);
			\draw(P2314) -- (P2341);
			\draw(P3124) -- (P3214);
			\draw(P3124) -- (P3142);
			\draw[line width=0.5mm,red](P2143) -- (P2413);
			\draw(P1342) -- (P3142);
			\draw(P1342) -- (P1432);
			\draw[line width=0.5mm,blue](P1423) -- (P4123);
			\draw[line width=0.5mm,green](P1423) -- (P1432);
			\draw(P3214) -- (P3241);
			\draw(P2341) -- (P3241);
			\draw(P2341) -- (P2431);
			\draw(P3142) -- (P3412);
			\draw[line width=0.5mm,blue](P2413) -- (P4213);
			\draw[line width=0.5mm,green](P2413) -- (P2431);
			\draw(P4123) -- (P4213);
			\draw[line width=0.5mm,green](P4123) -- (P4132);
			\draw(P1432) -- (P4132);
			\draw(P3241) -- (P3421);
			\draw(P2431) -- (P4231);
			\draw(P3412) -- (P4312);
			\draw(P3412) -- (P3421);
			\draw[line width=0.5mm,green](P4213) -- (P4231);
			\draw[line width=0.5mm,red](P4132) -- (P4312);
			\draw(P3421) -- (P4321);
			\draw[line width=0.5mm,red](P4231) -- (P4321);
			\draw(P4312) -- (P4321);
    	\end{tikzpicture}
    	&
    	\begin{tikzpicture}[xscale=.9, yscale=0.7, color=lightgray]
			\node[blue](P1234) at (0,0){1234};
			\node[blue](P2134) at (-1,1.5){2134};
			\node[blue](P1324) at ( 0,1.5){1324};
			\node[blue](P1243) at ( 1,1.5){1243};
			\node[blue](P2314) at (-2,3){2314};
			\node[blue](P3124) at (-1,3){3124};
			\node[blue](P2143) at ( 0,3){2143};
			\node[blue](P1342) at ( 1,3){1342};
			\node(P1423) at ( 2,3){1423};
			\node[blue](P3214) at (-2.5,4.5){3214};
			\node[blue](P2341) at (-1.5,4.5){2341};
			\node[blue](P3142) at (-0.5,4.5){3142};
			\node(P2413) at ( 0.5,4.5){2413};
			\node(P4123) at ( 1.5,4.5){4123};
			\node[blue](P1432) at ( 2.5,4.5){1432};
			\node[blue](P3241) at (-2,6){3241};
			\node[blue](P2431) at (-1,6){2431};
			\node[blue](P3412) at ( 0,6){3412};
			\node(P4213) at ( 1,6){4213};
			\node(P4132) at ( 2,6){4132};
			\node[blue](P3421) at (-1,7.5){3421};
			\node(P4231) at ( 0,7.5){4231};
			\node[blue](P4312) at ( 1,7.5){4312};
			\node[blue](P4321) at (0,9){4321};
			\draw[line width=0.5mm,blue](P1234) -- (P2134);
			\draw[line width=0.5mm,red](P1234) -- (P1324);
			\draw[line width=0.5mm,green](P1234) -- (P1243);
			\draw[line width=0.5mm,red](P2134) -- (P2314);
			\draw[line width=0.5mm,green](P2134) -- (P2143);
			\draw[line width=0.5mm,blue](P1324) -- (P3124);
			\draw[line width=0.5mm,green](P1324) -- (P1342);
			\draw(P1243) -- (P2143);
			\draw(P1243) -- (P1423);
			\draw[line width=0.5mm,blue](P2314) -- (P3214);
			\draw[line width=0.5mm,green](P2314) -- (P2341);
			\draw(P3124) -- (P3214);
			\draw[line width=0.5mm,green](P3124) -- (P3142);
			\draw(P2143) -- (P2413);
			\draw(P1342) -- (P3142);
			\draw[line width=0.5mm,red](P1342) -- (P1432);
			\draw(P1423) -- (P4123);
			\draw(P1423) -- (P1432);
			\draw[line width=0.5mm,green](P3214) -- (P3241);
			\draw(P2341) -- (P3241);
			\draw[line width=0.5mm,red](P2341) -- (P2431);
			\draw[line width=0.5mm,red](P3142) -- (P3412);
			\draw(P2413) -- (P4213);
			\draw(P2413) -- (P2431);
			\draw(P4123) -- (P4213);
			\draw(P4123) -- (P4132);
			\draw(P1432) -- (P4132);
			\draw[line width=0.5mm,red](P3241) -- (P3421);
			\draw(P2431) -- (P4231);
			\draw[line width=0.5mm,blue](P3412) -- (P4312);
			\draw(P3412) -- (P3421);
			\draw(P4213) -- (P4231);
			\draw(P4132) -- (P4312);
			\draw[line width=0.5mm,blue](P3421) -- (P4321);
			\draw(P4231) -- (P4321);
			\draw(P4312) -- (P4321);
    	\end{tikzpicture}
	\end{tabular}
}
{Generating trees for the $(\{j\}, \varnothing)$- and $(\varnothing, \{j\})$-permutree minimal permutations of~$\fS_4$, with priority order $s_1 \prec s_2 \prec s_3$.}
{fig:TreePartialOrientations}


\section{Intersection of automata}\label{sec:intersectionsAutomata}

We now consider arbitrary subsets~$U$ and~$D$ of~$\{2, \dots, n-1\}$.
We already know from~\cite{PilaudPons-permutrees} and \cref{thm:patternAvoidance} that the following conditions are equivalent for~$\pi \in \fS_n$:
\begin{enumerate}[(i)]
	\item the permutation $\pi$ is minimal in its~$(U,D)$-permutree class, 
	\item for~$i < j < k$, the permutation $\pi$ does not contain the subword~$jki$ if~$j \in U$ and~$kij$ if~$j \in D$,
	\item for each~$j \in U$ (each~$j \in D$), there is a reduced word for~$\pi$ accepted by~$\automatonU(j)$ (resp.~by~$\automatonD(j)$).
\end{enumerate}
A natural question is whether there is a reduced word simultaneously accepted by all these automata.
We start with an example showing that this is not always the case.

\begin{example}\label{exm:problemUDintersect}
For~$j \in \{2, \dots, n-1\}$, consider $U = \{j\} = D$, and~$\pi = s_{j-1} \cdot s_j \cdot s_{j-1} = s_j \cdot s_{j-1} \cdot s_j$.
Then, the word~$s_{j-1} \cdot s_j \cdot s_{j-1}$ is accepted by $\automatonD(j)$ but not by~$\automatonU(j)$, while the word $s_j \cdot s_{j-1} \cdot s_j$ is accepted by $\automatonU(j)$ but not by~$\automatonD(j)$.
\end{example}

This example clearly extends to all subsets~$U$ and~$D$ of~$\{2, \dots, n-1\}$ with a non-empty intersection.
In contrast, we will now show that this situation cannot occur when~$U$ and~$D$ are disjoint.

\subsection{Proof of \cref{thm:permutreeMinimal}}

\cref{exm:problemUDintersect} motivates \cref{thm:permutreeMinimal} that we repeat here for convenience.

\newtheorem*{thm:permutreeMinimal}{\cref{thm:permutreeMinimal}}
\begin{thm:permutreeMinimal}
Consider two disjoint subsets~$U$ and~$D$ of~$\{2, \dots, n-1\}$.
The following conditions are equivalent for~$\pi \in \fS_n$:
\begin{itemize}
\item $\pi$ admits a reduced word accepted by all automata~$\automatonU(j)$ for~$j \in U$ and~$\automatonD(j)$ for~${j \in D}$,
\item $\pi$ contains no subword $jki$ if~$j \in U$ and~$kij$ if~$j \in D$ for any~$i < j < k$.
\end{itemize}
\end{thm:permutreeMinimal}

\begin{proof}
The direct implication is immediate from \cref{thm:patternAvoidance}.
For the converse implication, consider a permutation~$\pi$ that avoids~$jki$ for~$j \in U$ and~$kij$ for~$j \in D$.
Consider $\bar U \eqdef \set{j \in U}{\ninv_j(\pi) \ne 0}$ and~$\bar D \eqdef \set{j \in D}{\ninv^j(\pi) \ne 0}$.
By \cref{prop:sameStateAcceptedReducedExpressionsRefined}\,(ii), any reduced word for~$\pi$ is accepted by~$\automatonU(j)$ for~$j \in U \ssm \bar U$ and by~$\automatonD(j)$ for~$j \in D \ssm \bar D$.
We can therefore assume that~$\bar U = U$ and~$\bar D = D$ and one of them is non-empty, say~$\bar U = U \ne \varnothing$.
Let~$j_\circ \eqdef \max(U)$ and~$m$ be minimal such that~$j_\circ < m$ and~$\pi^{-1}(j_\circ) > \pi^{-1}(m)$.
By minimality of~$m$, we obtain that~$\pi$ contains the subword~$mj_\circ\ell$ for any~$j_\circ < \ell < m$.
It implies that
\begin{itemize}
\item $\ell$ is neither in~$U$ by maximality of~$j_\circ$, nor in~$D$ by assumption on~$\pi$, for all~$j_\circ < \ell < m$,
\item $\pi$ reverses~$\ell$ and $m$ for all $\ell$ in $\{j_\circ,\dots,m-1\}$, so that $\pi$ admits a reduced word of the form $\pi = s_{m-1} \cdots s_{j_\circ} \cdot \tau$.
\end{itemize}
Lemmas \ref{lem:vincent2}\,(2) and \ref{lem:vincent4}\,(2) ensure that
\begin{itemize}
\item $\tau$ avoids~$jki$ for all~$j \in U \ssm \{j_\circ\}$ and~$kij$ for all~$j \in D \ssm \{m\}$ (because~$j_\circ, \dots, m-1$ are all distinct from~$j-1$ and~$j$ in these cases by the second observation above),
\item $\tau$ avoids $(j_\circ+1)ki$,
\item if~$m \in D$, then~$\tau$ avoids~$kij_\circ$.
\end{itemize}
By induction, it follows that~$\tau$ admits a reduced word~$w$ simultaneously accepted by all automata~$\automatonU(j)$ for~$j \in U \ssm \{j_\circ\}$ and~$j = j_\circ + 1$, and all automata~$\automatonD(j)$ for~$j \in D \ssm \{m\}$ and~$j = j_\circ$ if~$m \in D$.
By Lemmas \ref{lem:vincent2}\,(1) and \ref{lem:vincent4}\,(1), we conclude that $s_{m-1} \cdots s_{j_\circ} \cdot w$ is a reduced word for~$\pi$ simultaneously accepted by all $\automatonU(j)$ for~$j \in U$ and~$\automatonD(j)$ for~$j \in D$.
\end{proof}

\subsection{Intersection of automata}

\cref{thm:permutreeMinimal} can be rephrased in terms of intersection of automata.
Recall that the intersection of some automata~$\automatonA_1, \dots, \automatonA_p$ is the automaton~$\automatonA = \bigcap_{i \in [p]} \automatonA_i$ such that a word is accepted by~$\automatonA$ if and only if it is accepted by all~$\automatonA_1, \dots, \automatonA_p$.
A state of the automaton~$\automatonA$ is $p$-tuple formed by states of the automata~$\automatonA_1, \dots, \automatonA_p$, and a transition~$t$ simultaneously changes all entries of the $p$-tuple corresponding to states modified by~$t$.
See~\cite[p.\,59--60]{HopcroftUllman} for details.
We denote by~$\automatonP(U,D)$ the intersection of the automata~$\automatonU(j)$ for~$j \in U$ and~$\automatonD(j)$ for~$j \in D$.
We thus obtain the following statement.

\begin{corollary}
\label{coro:permutreeMinimal}
When~$U$ and~$D$ are disjoint, the following conditions are equivalent for~$\pi \in \fS_n$:
\begin{itemize}
	\item $\pi$ admits a reduced word accepted by the automaton~$\automatonP(U,D)$,
	\item $\pi$ contains no subword~$jki$ if~$j \in U$ and~$kij$ if~$j \in D$ with~$i < j < k$.
\end{itemize}
\end{corollary}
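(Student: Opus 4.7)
The plan is to derive this corollary directly from \cref{thm:permutreeMinimal} by invoking the defining property of the intersection automaton. Recall that for automata $\automatonA_1, \dots, \automatonA_p$, the intersection automaton $\automatonA = \bigcap_{i \in [p]} \automatonA_i$ is characterized by the fact that a word is accepted by $\automatonA$ if and only if it is accepted simultaneously by each $\automatonA_i$. Applying this to the definition $\automatonP(U,D) \eqdef \bigcap_{j \in U} \automatonU(j) \cap \bigcap_{j \in D} \automatonD(j)$, a reduced expression $w$ for $\pi$ is accepted by $\automatonP(U,D)$ if and only if $w$ is accepted by every $\automatonU(j)$ for $j \in U$ and every $\automatonD(j)$ for $j \in D$.

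Thus the first bullet of \cref{coro:permutreeMinimal} — namely the existence of a reduced expression for $\pi$ accepted by $\automatonP(U,D)$ — is literally a restatement of the first bullet of \cref{thm:permutreeMinimal}, the existence of a reduced expression for $\pi$ simultaneously accepted by all of the $\automatonU(j)$ for $j \in U$ and $\automatonD(j)$ for $j \in D$. The second bullets of the two statements are verbatim identical. Since $U$ and $D$ are assumed disjoint, \cref{thm:permutreeMinimal} applies and gives the equivalence of these two conditions, which is exactly the content of the corollary.

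There is essentially no obstacle: this is a translation between two equivalent formalisms (a family of automata versus their product) rather than a new mathematical claim. The only thing worth emphasizing in the write-up is that the disjointness hypothesis on $U$ and $D$ is inherited from \cref{thm:permutreeMinimal}, and that \cref{exm:problemUDintersect} already shows it cannot be dropped: for $U = D = \{j\}$, the intersection $\automatonP(U,D)$ would reject every reduced expression of $s_{j-1} s_j s_{j-1}$, even though this permutation vacuously avoids the relevant subwords in the disjoint setting. Hence the short proof consists simply of unfolding the definition of the intersection and citing \cref{thm:permutreeMinimal}.
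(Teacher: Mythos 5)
Your proof is correct and matches the paper exactly: the paper states \cref{coro:permutreeMinimal} as an immediate rephrasing of \cref{thm:permutreeMinimal}, obtained by unfolding the definition of the intersection automaton~$\automatonP(U,D)$ (a word is accepted by the intersection if and only if it is accepted by each factor), with no further argument. Your aside on the necessity of disjointness via \cref{exm:problemUDintersect} is accurate and consistent with the paper's discussion.
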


We say that a state of~$\automatonP(U,D)$ is \defn{healthy} (resp.~\defn{ill}, resp.~\defn{dead}) when the corresponding states in~$\automatonU(j)$ for~$j \in U$ and~$\automatonD(j)$ for~$j \in D$ are all healthy (resp.~contain at least one ill state, but no dead one, resp.~contains at least one dead state).
\cref{fig:automataProduct} illustrates the automata~$\automatonP(\{4\},\{2\})$ when~$n = 5$ (left), $\automatonP(\{3\},\{2\})$ for $n=4$ (middle), and~$\automatonP(\{2\},\{4\})$ for $n=5$ (right).
For the first two automata, we draw the complete automata on top, and their skeleta on the bottom.
Here, we call skeleton a simplification of the automaton that recognizes the same reduced words.
It is obtained using the fact that the word is rejected as soon as we reach a dead state, and that the automata~$\automatonU(n)$ and~$\automatonD(1)$ accept all reduced words.
For the last automaton, the complete intersection is too big, so we only draw the reachable healthy states.
We color the transitions in red, blue, or purple depending on whether \mbox{only~$\automatonU$, only $\automatonD$, or both~$\automatonU$ and~$\automatonD$ change state}.
\hvFloat[floatPos=p, capWidth=h, capPos=r, capAngle=90, objectAngle=90, capVPos=c, objectPos=c]{figure}
{
\begin{tabular}{l@{\hspace{-.8cm}}l@{\hspace{-.8cm}}l}
	\begin{tikzpicture}[shorten >=1pt, node distance=2cm, on grid, auto, baseline=-1.5cm]
		\node[state,initial,accepting,minimum size=0.5cm] (11) {}; 
		\node[state,accepting,,minimum size=0.5cm] (12) [right= 1.5cm of 11] {};
		\node[state,accepting,,minimum size=0.5cm] (21) [right= 2.5cm of 12] {};
		\node[state,accepting,,minimum size=0.5cm] (22) [right= 1.5cm of 21] {};
		\node[state,accepting,minimum size=0.5cm] (13) [below= 1cm of 11] {};
		\node[state,accepting,minimum size=0.5cm] (14) [right= 1.5cm of 13] {};
		\node[state,accepting,minimum size=0.5cm] (23) [right= 2.5cm of 14] {};
		\node[state,accepting,minimum size=0.5cm] (24) [right= 1.5cm of 23] {};
		\node[state,accepting,minimum size=0.5cm] (31) [below= 1.5cm of 14] {};
		\node[state,accepting,minimum size=0.5cm] (32) [right= 1.5cm of 31] {};
		\node[state,minimum size=0.5cm] (15) [below= 1cm of 13] {};
		\node[state,minimum size=0.5cm] (25) [below= 1cm of 23] {};
		\node[state,accepting,minimum size=0.5cm] (33) [below= 1cm of 31] {};
		\node[state,accepting,minimum size=0.5cm] (34) [right= 1.5cm of 33] {};
		\node[state,accepting,minimum size=0.5cm] (41) [right= 2.5cm of 32] {};
		\node[state,accepting,minimum size=0.5cm] (42) [right= 1.5cm of 41] {};
		\node[state,minimum size=0.5cm] (35) [below= 1cm of 33] {};
		\node[state,accepting,minimum size=0.5cm] (43) [below= 1cm of 41] {};
		\node[state,accepting,minimum size=0.5cm] (44) [right= 1.5cm of 43] {};
		\node[state,minimum size=0.5cm] (51) [below= 1.5cm of 34] {};
		\node[state,minimum size=0.5cm] (52) [right= 1.5cm of 51] {};
		\node[state,minimum size=0.5cm] (45) [below= 1cm of 43] {};
		\node[state,minimum size=0.5cm] (53) [below= 1cm of 51] {};
		\node[state,minimum size=0.5cm] (54) [right= 1.5cm of 53] {};
		\node[state,minimum size=0.5cm] (55) [below= 1cm of 53] {};
		\path[->] (11) edge [bend left,color=blue] node {$s_1$} (21) edge [color=blue] node [below=1mm] {$s_2$} (31) edge [color=red] node [swap] {$s_3$} (13) edge [color=red] node [swap] {$s_4$} (12);
		\path[->] (12) edge [bend left,color=blue] node {$s_1$} (22) edge [color=blue] node [below=1mm] {$s_2$} (32) edge [color=red] node [swap] {$s_4$} (14);
		\path[->] (13) edge [bend left,color=blue] node {$s_1$} (23) edge [color=blue] node [below=1mm] {$s_2$} (33) edge [color=red] node [swap] {$s_4$} (15);
		\path[->] (14) edge [bend left,color=blue] node {$s_1$} (24) edge [color=blue] node [left=-1mm] {$s_2$} (34);
		\path[->] (15) edge [color=blue] node [below=1mm] {$s_2$} (35);
		\path[->] (21) edge [color=blue] node [below=1mm] {$s_1$} (41) edge [color=red] node [swap] {$s_3$} (23) edge [color=red] node [swap] {$s_4$} (22);
		\path[->] (22) edge [color=blue] node [below=1mm] {$s_1$} (42) edge [color=red] node [swap] {$s_4$} (24);
		\path[->] (23) edge [color=blue] node [below=1mm] {$s_1$} (43) edge [color=red] node [swap] {$s_4$} (25);
		\path[->] (24) edge [color=blue] node [left=-1mm] {$s_1$} (44);
		\path[->] (25) edge [color=blue] node [below=1mm] {$s_1$} (45);
		\path[->] (31) edge [color=blue] node [below=1mm] {$s_1$} (51) edge [color=red] node {$s_3$} (33) edge [color=red] node [below left=-1mm] {$s_4$} (32);
		\path[->] (32) edge [color=blue] node [below=1mm] {$s_1$} (52) edge [color=red] node {$s_4$} (34);
		\path[->] (33) edge [color=blue] node [below=1mm] {$s_1$} (53) edge [color=red] node {$s_4$} (35);
		\path[->] (34) edge [color=blue] node [left=-1mm] {$s_1$} (54);
		\path[->] (35) edge [color=blue] node [below=1mm] {$s_1$} (55);
		\path[->] (41) edge [color=red] node {$s_3$} (43) edge [color=red] node [below left=-1mm] {$s_4$} (42);
		\path[->] (42) edge [color=red] node {$s_4$} (44);
		\path[->] (43) edge [color=red] node {$s_4$} (45);
		\path[->] (51) edge [color=red] node {$s_3$} (53) edge [color=red] node [below left=-1mm] {$s_4$} (52);
		\path[->] (52) edge [color=red] node {$s_4$} (54);
		\path[->] (53) edge [color=red] node {$s_4$} (55);
	\end{tikzpicture}
	&
	\begin{tikzpicture}[shorten >=1pt, node distance=2cm, on grid, auto, baseline=-1.5cm]
		\node[state,initial,accepting,minimum size=0.5cm] (11) {}; 
		\node[state,accepting,,minimum size=0.5cm] (12) [right= 1.5cm of 11] {};
		\node[state,accepting,minimum size=0.5cm] (14) [below= 1cm of 12] {};
		\node[state,accepting,minimum size=0.5cm] (21) [right= 2.5cm of 12] {};
		\node[state,accepting,minimum size=0.5cm] (22) [right= 1.5cm of 21] {};		
		\node[state,accepting,minimum size=0.5cm] (23) [right= 2.5cm of 14] {};
		\node[state,accepting,minimum size=0.5cm] (24) [right= 1.5cm of 23] {};
		\node[state,minimum size=0.5cm] (25) [below= 1cm of 23] {};
		\node[state,accepting,minimum size=0.5cm] (33) [below= 2.5cm of 14] {};
		\node[state,accepting,minimum size=0.5cm] (34) [right= 1.5cm of 33] {};
		\node[state,accepting,minimum size=0.5cm] (32) [above= 1cm of 34] {};
		\node[state,minimum size=0.5cm] (35) [below= 1cm of 33] {};
		\node[state,accepting,minimum size=0.5cm] (41) [below= 1.5cm of 24] {};
		\node[state,accepting,minimum size=0.5cm] (42) [right= 1.5cm of 41] {};
		\node[state,accepting,minimum size=0.5cm] (43) [below= 1cm of 41] {};
		\node[state,accepting,minimum size=0.5cm] (44) [right= 1.5cm of 43] {};
		\node[state,minimum size=0.5cm] (45) [below= 1cm of 43] {};
		\node[state,minimum size=0.5cm] (53) [below= 2.5cm of 34] {};
		\node[state,minimum size=0.5cm] (55) [below= 1cm of 53] {};
		\node[state,minimum size=0.5cm] (54) [right= 1.5cm of 53] {};
		\node[state,minimum size=0.5cm] (52) [above= 1cm of 54] {};
		\path[->] (11) edge [bend left,color=blue] node {$s_1$} (21) edge [color=violet] node [below=2mm] {$s_2$} (33) edge [color=red] node [swap] {$s_3$} (12);
		\path[->] (12) edge [bend left,color=blue] node {$s_1$} (22) edge [color=blue] node [below=1mm] {$s_2$} (32) edge [color=red] node [swap] {$s_3$} (14);
		\path[->] (14) edge [bend left,color=blue] node {$s_1$} (24) edge [color=blue] node [below=1mm] {$s_2$} (34);
		\path[->] (21) edge [color=blue] node [below=1mm] {$s_1$} (41) edge [color=red] node [swap] {$s_2$} (23) edge [color=red] node [swap] {$s_3$} (22);
		\path[->] (22) edge [color=blue] node [below=1mm] {$s_1$} (42) edge [color=red] node [swap] {$s_3$} (24);
		\path[->] (23) edge [color=blue] node [below=1mm] {$s_1$} (43) edge [color=red] node [swap] {$s_3$} (25);
		\path[->] (24) edge [color=blue] node [left=-1mm] {$s_1$} (44);
		\path[->] (25) edge [color=blue] node [below=1mm] {$s_1$} (45);
		\path[->] (32) edge [color=blue] node [below=1mm] {$s_1$} (52) edge [color=red] node {$s_3$} (34);
		\path[->] (33) edge [color=red] node [swap] {$s_3$} (35) edge [color=blue] node [below=1mm] {$s_1$} (53);
		\path[->] (34) edge [color=blue] node [below=1mm] {$s_1$} (54);
		\path[->] (35) edge [color=blue] node [below=1mm] {$s_1$} (55);
		\path[->] (41) edge [color=red] node {$s_2$} (43) edge [color=red] node [below left=-1mm] {$s_3$} (42);
		\path[->] (42) edge [color=red] node {$s_3$} (44);
		\path[->] (43) edge [color=red] node {$s_3$} (45);
		\path[->] (52) edge [color=red] node {$s_3$} (54);
		\path[->] (53) edge [color=red] node {$s_3$} (55);
	\end{tikzpicture}
	&
	\begin{tikzpicture}[shorten >=1pt, node distance=2cm, on grid, auto, baseline=-1.5cm]
		\node[state,initial,accepting,minimum size=0.5cm] (11) {}; 
		\node[state,accepting,,minimum size=0.5cm] (12) [right= 1.5cm of 11] {};
		\node[state,accepting,,minimum size=0.5cm] (23) [right= 1.5cm of 12] {};
		\node[state,accepting,,minimum size=0.5cm] (24) [right= 1.5cm of 23] {};
		\node[state,accepting,minimum size=0.5cm] (21) [below= 1.5cm of 11] {};
		\node[state,accepting,minimum size=0.5cm] (32) [below= 1.5cm of 12] {};
		\node[state,accepting,minimum size=0.5cm] (33) [below= 1.5cm of 23] {};
		\node[state,accepting,minimum size=0.5cm] (34) [below= 1.5cm of 24] {};
		\node[state,accepting,minimum size=0.5cm] (42) [below= 1.5cm of 32] {};
		\node[state,accepting,minimum size=0.5cm] (43) [below= 1.5cm of 33] {};
		\node[state,accepting,minimum size=0.5cm] (44) [below= 1.5cm of 34] {};
		\path[->] (11) edge [color=red] node [swap] {$s_2$} (21) edge [color=blue] node {$s_3$} (12);
		\path[->] (12) edge [color=violet] node {$s_2$} (23);
		\path[->] (23) edge [color=red] node [swap] {$s_3$} (33) edge [color=blue] node {$s_1$} (24);
		\path[->] (24) edge [color=red] node [swap] {$s_3$} (34);
		\path[->] (21) edge [color=violet] node {$s_3$} (32);
		\path[->] (32) edge [color=blue] node {$s_2$} (33) edge [color=red] node [swap] {$s_4$} (42);
		\path[->] (33) edge [color=blue] node {$s_1$} (34) edge [color=red] node [swap] {$s_4$} (43);
		\path[->] (34) edge [color=red] node [swap] {$s_4$} (44);
		\path[->] (42) edge [color=blue] node {$s_2$} (43);
		\path[->] (43) edge [color=blue] node {$s_1$} (44);
	\end{tikzpicture}
	\\
	\\[.3cm]
	\begin{tikzpicture}[shorten >=1pt, node distance=2cm, on grid, auto, baseline=-1.5cm]
		\node[state,initial,accepting,minimum size=0.5cm] (h1) {}; 
		\node[state,accepting,,minimum size=0.5cm] (h2) [right= 1.5cm of h1] {};
		\node[state,accepting,minimum size=0.5cm] (h3) [right= 2.5cm of h2] {};
		\node[state,accepting,minimum size=0.5cm] (h4) [right= 1.5cm of h3] {};
		\node[state,accepting,minimum size=0.5cm] (i1) [below= 1cm of h1] {};
		\node[state,accepting,minimum size=0.5cm] (i11) [right= 1cm of i1] {};
		\node[state,accepting,minimum size=0.5cm] (i12) [below= 1cm of i11] {};
		\node[state,accepting,minimum size=0.5cm] (i2) [right= 1.5cm of i11] {};
		\node[state,minimum size=0.5cm] (d) [below= 2.5cm of i2] {};
		\node[state,accepting,minimum size=0.5cm] (i3) [below= 1.1cm of h3] {};
		\path[->] (h1) edge [bend left,color=blue] node {$s_1$} (h3) edge [swap,color=red] node {$s_3$} (i1) edge [color=red] node [swap] {$s_4$} (h2) edge [color=blue] node [below left=-2mm] {$s_2$} (i11) ;
		\path[->] (i1) edge [bend right=50,swap,color=red] node {$s_4$} (d) edge [bend left, color=blue] node {$s_1$} (i3) edge [color=blue] node [below left=-2mm] {$s_2$} (i12);
		\path[->] (i11) edge [swap,color=red] node {$s_3$} (i12) edge [color=blue] node {$s_1$} (d) edge [color=red] node [swap] {$s_4$} (i2);
		\path[->] (i12) edge [color=violet] node [below left=-2mm] {${\color{blue}s_1},{\color{red}s_4}$} (d);
		\path[->] (h2) edge [bend left,color=blue] node {$s_1$} (h4) edge [color=blue] node [below left=-2mm] {$s_2$} (i2);
		\path[->] (i2) edge [color=blue] node {$s_1$} (d);
		\path[->] (h3) edge [color=red] node [swap] {$s_4$} (h4) edge [swap,color=red] node {$s_3$} (i3);
		\path[->] (i3) edge [color=red] node {$s_4$} (d);
	\end{tikzpicture}
	&
	\begin{tikzpicture}[shorten >=1pt, node distance=2cm, on grid, auto, baseline=-1.5cm]
		\node[state,initial,accepting,minimum size=0.5cm] (h1) {}; 
		\node[state,accepting,,minimum size=0.5cm] (h2) [right= 1.5cm of h1] {};
		\node[state,accepting,minimum size=0.5cm] (h3) [right= 2.5cm of h2] {};
		\node[state,accepting,minimum size=0.5cm] (h4) [right= 1.5cm of h3] {};
		\node[state,accepting,minimum size=0.5cm] (i1) [below= 1cm of h1] {};
		\node[state,accepting,minimum size=0.5cm] (i2) [below= 1cm of h2] {};
		\node[state,minimum size=0.5cm] (d) [below= 1cm of i2] {};
		\node[state,accepting,minimum size=0.5cm] (i3) [below= 1cm of h3] {};
		\path[->] (h1) edge [bend left,color=blue] node {$s_1$} (h3) edge [color=violet] node [swap] {$s_2$} (i1) edge [color=red] node [swap] {$s_3$} (h2);
		\path[->] (i1) edge [color=violet] node [below left=-2mm] {${\color{blue}s_1},{\color{red}s_3}$} (d);
		\path[->] (h2) edge [bend left,color=blue] node {$s_1$} (h4) edge [color=blue] node {$s_2$} (i2);
		\path[->] (i2) edge [color=blue] node {$s_1$} (d);
		\path[->] (h3) edge [color=red] node [swap] {$s_3$} (h4) edge [swap,color=red] node {$s_2$} (i3);
		\path[->] (i3) edge [color=red] node {$s_3$} (d);
	\end{tikzpicture}
\end{tabular}
}
{The automaton~$\automatonP(\{4\},\{2\})$ for $n = 5$ and its skeleton (left), the automaton~$\automatonP(\{3\},\{2\})$ for $n=4$ and its skeleton (middle), and the healthy states of the automaton~$\automatonP(\{2\},\{4\})$ for $n=5$ (right).}
{fig:automataProduct}

\subsection{The set of accepted reduced words of~$\automatonP(U,D)$}

Applying the principles of \cref{subsec:principles} to each automaton~$\automatonU(j)$ for~$j \in U$ and~$\automatonD(j)$ for~$j \in D$, we derive similar principles for the automaton~$\automatonP(U,D)$.
The following statements are direct consequences of \cref{prop:prefixesReducedExpressions,prop:algorithm,prop:sameStateAcceptedReducedExpressions}.

\begin{proposition}\label{prop:prefixesReducedExpressionsIntersection}
The set of reduced words accepted by~$\automatonP(U,D)$ is closed by prefix.
\end{proposition}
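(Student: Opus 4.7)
The plan is to deduce this directly from the corresponding prefix-closure property for the individual automata $\automatonU(j)$ and $\automatonD(j)$ established in \cref{prop:prefixesReducedExpressions}, combined with the definition of the intersection automaton $\automatonP(U,D)$.

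Concretely, let $w$ be a reduced word accepted by $\automatonP(U,D)$, and let $u$ be a prefix of $w$. First, $u$ is itself a reduced word, since prefixes of reduced expressions are reduced (this is a standard property of the weak order/Coxeter groups). Second, by the definition of the intersection of automata recalled in the paragraph preceding \cref{coro:permutreeMinimal}, the word $w$ being accepted by $\automatonP(U,D)$ means that $w$ is simultaneously accepted by each $\automatonU(j)$ for $j \in U$ and each $\automatonD(j)$ for $j \in D$. Applying \cref{prop:prefixesReducedExpressions} to each of these automata individually, the prefix $u$ is accepted by each $\automatonU(j)$ for $j \in U$ and each $\automatonD(j)$ for $j \in D$. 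Invoking the definition of the intersection automaton once more, $u$ is accepted by $\automatonP(U,D)$.

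There is no real obstacle here: the statement is a one-line consequence of \cref{prop:prefixesReducedExpressions} together with the universal property of intersection automata (a word is accepted by the intersection iff it is accepted by every factor). The only mildly subtle point worth flagging explicitly is that one must note that prefixes of reduced words are reduced, so that the hypothesis of \cref{prop:prefixesReducedExpressions} genuinely applies to the prefix $u$ and not merely to some arbitrary word.
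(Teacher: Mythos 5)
Your proof is correct and matches the paper's intent: the paper states this proposition as a direct consequence of \cref{prop:prefixesReducedExpressions} applied to each automaton $\automatonU(j)$, $j \in U$, and $\automatonD(j)$, $j \in D$, combined with the definition of the intersection automaton, which is precisely your argument. Your explicit remark that prefixes of reduced words are themselves reduced is a correct and worthwhile detail that the paper leaves implicit.
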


\begin{proposition}\label{prop:algorithmIntersection}
If a permutation~$\pi$ avoids~$jki$ for~$j \in U$ and~$kij$ for~$j \in D$, and admits a reduced word starting with~$s_\ell$ such that the transition~$s_\ell$ leads to a healthy state of~$\automatonP(U,D)$, then it admits a reduced word starting with~$s_\ell$ and accepted by~$\automatonP(U,D)$.
\end{proposition}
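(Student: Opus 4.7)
The plan is to reduce the claim to Theorem~\ref{thm:permutreeMinimal} applied to the shorter permutation $\tau \eqdef s_\ell \cdot \pi$, with an appropriate pair of disjoint subsets $(U', D')$ of $\{2,\dots,n-1\}$ tracking the state of $\automatonP(U,D)$ reached after the first transition $s_\ell$.

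I first unpack the healthy hypothesis into the concrete conditions $\ell \neq j-1$ for every $j \in U$ and $\ell \neq j$ for every $j \in D$. Under these conditions, in each factor $\automatonU(j)$ the transition $s_\ell$ from the initial state either loops (if $\ell \neq j$) or lands at the initial state of $\automatonU(j+1)$ (if $\ell = j$), and symmetrically for each $\automatonD(j)$. This dictates defining $U'$ to contain $j$ in the first case and $j+1$ in the second, and similarly $D'$ to contain $j$ or $j-1$, intersected with $\{2,\dots,n-1\}$ to absorb the boundary values $j+1 = n$ and $j-1 = 1$ (which are trivial because $\automatonU(n)$ and $\automatonD(1)$ accept everything and the corresponding patterns are vacuous).

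The main technical point is to verify $U' \cap D' = \varnothing$. I would run a short four-case analysis on how a putative common element $m$ could lie in both $U'$ and $D'$ (arising from $U$ itself or from an upward shift by $1$, crossed with from $D$ itself or from a downward shift by $1$). In each case, either $U \cap D \neq \varnothing$ (contradicting the hypothesis on $(U,D)$) or $\ell$ is forced to take two incompatible values by the healthy condition. I expect the bulk of the genuine bookkeeping to live here, though each case is elementary.

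Once disjointness is settled, Lemmas~\ref{lem:vincent2}\,(2) and~\ref{lem:vincent4}\,(2) transfer the pattern avoidance from $\pi$ to $\tau$ for each element of $U'$ and $D'$, so Theorem~\ref{thm:permutreeMinimal} applied to $\tau$ yields a reduced expression $v$ accepted by $\automatonP(U', D')$. Running Lemmas~\ref{lem:vincent2}\,(1) and~\ref{lem:vincent4}\,(1) in the opposite direction then shows that $s_\ell \cdot v$ is a reduced expression for $\pi$ accepted by $\automatonP(U, D)$ and starting with $s_\ell$, which is exactly the required conclusion.
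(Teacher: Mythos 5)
Your proof is correct. Note that the paper itself gives no argument here: it declares \cref{prop:algorithmIntersection} a ``direct consequence'' of \cref{prop:prefixesReducedExpressions,prop:algorithm,prop:sameStateAcceptedReducedExpressions}, which is a little optimistic, since applying \cref{prop:algorithm} to each automaton separately only yields one accepted reduced expression \emph{per automaton}, with no guarantee that a single expression starting with~$s_\ell$ works for all of them simultaneously. Your route --- pass to $\tau \eqdef s_\ell \cdot \pi$, record the new state of each factor automaton via the shifted sets $U' = \moveU(U,\ell)$ and $D' = \moveD(D,\ell)$ (exactly the operations the paper introduces for \cref{algo:permutreeSorting}), transfer the avoidance conditions with \cref{lem:vincent2}\,(2) and \cref{lem:vincent4}\,(2), invoke \cref{thm:permutreeMinimal} for~$\tau$, and push the resulting expression back through \cref{lem:vincent2}\,(1) and \cref{lem:vincent4}\,(1) --- is precisely the recursion hidden in the proofs of \cref{thm:permutreeMinimal} and \cref{coro:algorithmIntersection}, and it supplies the justification the paper omits. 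Two small remarks. First, the disjointness of~$U'$ and~$D'$ is even easier than you anticipate: writing out the four cases for a common element~$m$, three of them are immediate contradictions on the value of~$m$ (namely $m = \ell$ against $m \ne \ell$, or $m = \ell+1$ against $m \ne \ell+1$, forced by the definitions of $\moveU$ and $\moveD$), and only the case where $m$ sits unmoved in both~$U$ and~$D$ uses $U \cap D = \varnothing$; the healthy condition plays no role in this step. Second, when invoking \cref{lem:vincent4} you should say explicitly that $\tau$ does not reverse~$\ell$ and~$\ell+1$, which holds because $\pi$ does (it admits a reduced expression starting with~$s_\ell$) and left multiplication by~$s_\ell$ undoes that inversion.
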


\begin{proposition}\label{prop:sameStateAcceptedReducedExpressionsIntersection}
Given a permutation $\pi \in \fS_n$, all the reduced words for~$\pi$ accepted by $\automatonP(U,D)$ end at the same state.
\end{proposition}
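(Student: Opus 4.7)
The plan is to reduce this to the componentwise statement \cref{prop:sameStateAcceptedReducedExpressions} by using the very definition of the intersection automaton $\automatonP(U,D)$. Recall that a state of $\automatonP(U,D)$ is a tuple indexed by $U \sqcup D$, whose $j$-th coordinate lies in $\automatonU(j)$ for $j \in U$ and in $\automatonD(j)$ for $j \in D$, and that a transition $s_i$ acts coordinate by coordinate. Consequently, reading a reduced expression $w$ for $\pi$ in $\automatonP(U,D)$ amounts to reading $w$ simultaneously in each $\automatonU(j)$ for $j \in U$ and each $\automatonD(j)$ for $j \in D$, and $w$ is accepted by $\automatonP(U,D)$ if and only if it is accepted by every one of these component automata.

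Now suppose that $w_1$ and $w_2$ are two reduced expressions for $\pi$ both accepted by $\automatonP(U,D)$. For each $j \in U$, both $w_1$ and $w_2$ are accepted reduced expressions for $\pi$ in $\automatonU(j)$, so \cref{prop:sameStateAcceptedReducedExpressions} applied to $\automatonU(j)$ ensures that $w_1$ and $w_2$ end at the same state of $\automatonU(j)$. Symmetrically, for each $j \in D$, they end at the same state of $\automatonD(j)$. Hence the tuples of final coordinates coincide, so $w_1$ and $w_2$ end at the same state of $\automatonP(U,D)$.

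There is no real obstacle here; the only thing to check is that the factorization of the intersection automaton across its components is compatible with the notion of ``ending state'', which is immediate from the coordinatewise definition of the transitions. In particular, no new induction or combinatorial argument is required beyond what is already packaged in \cref{prop:sameStateAcceptedReducedExpressions}.
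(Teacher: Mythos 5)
Your argument is correct and is exactly the route the paper takes: the paper states this proposition as a direct consequence of \cref{prop:sameStateAcceptedReducedExpressions}, and your coordinatewise reading of the intersection automaton is precisely the implicit justification. Nothing is missing.
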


\subsection{Permutree sorting}\label{subsec:permutreeSorting}

We have seen that the set of reduced words for any~$(U,D)$-permutree minimal permutation~$\pi$ that are accepted by~$\automatonP(U,D)$ is non-empty (by \cref{coro:permutreeMinimal}) and that the set of all reduced words that are accepted by~$\automatonP(U,D)$ is closed by prefix (by \cref{prop:prefixesReducedExpressionsIntersection}).
Therefore, it is possible to sort~$\pi$ passing only through $(U,D)$-permutree minimal permutations along the way.
This motivates the following definition.

\begin{definition}
An \defn{$(U,D)$-permutree sorting algorithm} is a sorting procedure such that
\begin{itemize}
\item applied to a $(U,D)$-permutree minimal permutation~$\pi$, it only passes through $(U,D)$-permutree minimal permutations and arrives to the identity permutation, 
\item it fails to sort a non $(U,D)$-permutree minimal permutation~$\pi$.
\end{itemize}
\end{definition}

\begin{example}
The stack sorting algorithm is a $(\{2, \dots, n-1\}, \varnothing)$-permutree sorting algorithm.
\end{example}

Said differently, any procedure that looks for a reduced word accepted by~$\automatonP(U,D)$ gives a $(U,D)$-permutree sorting algorithm.
For instance, \cref{algo:shortcutsUj} is a $(\{j\}, \varnothing)$-permutree sorting algorithm.
We generalize it in the following algorithm, where we opted for a recursive style.
As in \cref{algo:shortcutsUj}, the algorithm will read the automaton~$\automatonP(U,D)$ without actually constructing it.
To virtually follow the edges of the automaton~$\automatonP(U,D)$, we use two operations on our sets~$U$~and~$D$:
\[
\moveU(U,\ell) = 
\begin{cases}
U & \text{ if } \ell \notin U, \\
(U \ssm \{ \ell \}) \cup \{ \ell + 1 \} & \text{ if } \ell \in U,
\end{cases}
\]

\[
\moveD(D,\ell) = 
\begin{cases}
D & \text{ if } \ell + 1 \notin D, \\
(D \ssm \{ \ell + 1 \}) \cup \{ \ell \} & \text{ if } \ell + 1 \in D.
\end{cases}
\]

\bigskip
\IncMargin{1em}
\SetKwInOut{Input}{Input}\SetKwInOut{Output}{Output}
\SetKwFor{Repeat}{repeat}{}{}
\SetKwIF{If}{ElseIf}{Else}{if}{then}{else if}{else}{}
\SetKwProg{Fn}{Function}{}{}
\DontPrintSemicolon
\begin{algorithm}[H]
	\Fn{{\rm permutreeSort$(\pi, U, D)$}}{
	\Input{a permutation $\pi \in \fS_n$ and two disjoint subsets~$U$ and~$D$ of~$[n]$}
	\Output{a reduced word accepted by~$\automatonP(U,D)$, candidate reduced word for~$\pi$}
	\If{$\exists \; \ell \in [n-1]$ such that $\ell$ and $\ell+1$ are reversed in~$\pi$, and $\ell+1 \notin U$ and~$\ell \notin D$}{
		\Return $s_\ell \cdot \text{permutreeSort}(s_\ell \cdot \pi, \; \moveU(U, \ell), \; \moveD(D,\ell))$ \;
	}
	\If{$\exists \; \ell \in [n-1]$ such that $\ell$ and $\ell+1$ are reversed in~$\pi$,
	\\ and ($\ell + 1 \notin U$ or $\pi([\ell+1]) = [\ell +1]$) and ($\ell \notin D$ or $\pi([\ell -1]) = [\ell - 1]$)
	}{
		\Return $s_\ell \cdot \text{permutreeSort}(s_\ell \cdot \pi, \; \moveU(U \ssm \{ \ell + 1 \}, \ell), \; \moveD(D \ssm \{ \ell \},\ell))$ \;
	}
	\Return $\varepsilon$ \;
	}
	\caption{$(U,D)$-permutree sorting}
	\label{algo:permutreeSorting}
\end{algorithm}
\bigskip

Note that in \cref{algo:permutreeSorting}, we could ignore~$n$ in the list~$U$ (resp.~$1$ in the list~$D$) since~$\automatonU(n)$ (resp.~$\automatonD(1)$) accepts all reduced words.
We have decided not to do it to be coherent with our recursive definition of~$\automatonU(j)$ and~$\automatonD(j)$.

\begin{example}\label{exm:algo2}
	We present in \cref{tab:algo2-3-2} the $(\{3\},\{2\})$-permutree sorting algorithm in action for the permutations~$\pi_1 \eqdef 3214$, $\pi_2 \eqdef 1324$ and~$\pi_3 \eqdef 1342$, and in \cref{tab:algo2-2-4} the $(\{2\},\{4 \})$-permutree sorting algorithm in action for the permutations~$\pi_4 \eqdef 54213$ and~$\pi_5 \eqdef 15342$. 
	The corresponding automata~$\automatonP(\{3\},\{2\})$ and~$\automatonP(\{2\},\{4\})$ are represented in \cref{fig:automataProduct}.
	Each row in these tables contains the states of the permutation~$\pi$ and of the word~$w$, the current values of~$U$, $D$ and~$\ell$ in use at each step, and the values of~$k$ for which we have to check that~$\pi([k]) = [k]$, crossed in red when it fails. These tables show that~$\pi_1$ and~$\pi_2$ are $(\{3\},\{2\})$-permutree sortable while $\pi_3$ is not, and that~$\pi_4$ is $(\{2\},\{4 \})$-permutree sortable while $\pi_5$ is not.

\begin{table}[h!]
	\centerline{
	\begin{tabular}[t]{l|l|c|c|c|c}
		$\pi_1$ & $w_1$ & $U_1$ & $D_1$ & $\ell_1$ & $k_1$ \\
		\hline
		$3214$ & $\varepsilon$ & $\{3\}$ & $\{2\}$ & $1$ & . \\
		$3124$ & $s_1$ & $\{3\}$ & $\{1\}$ & $2$ & $3$ \\
		$2134$ & $s_1 \cdot s_2$ & $\varnothing$ & $\{1\}$ & $1$ & $0$ \\
		$1234$ & $s_1 \cdot s_2 \cdot s_1$ & & &
	\end{tabular}
	\quad
	\begin{tabular}[t]{l|l|c|c|c|c}
		$\pi_2$ & $w_2$ & $U_2$ & $D_2$ & $\ell_2$ & $k_2$ \\
		\hline
		$1324$ & $\varepsilon$ & $\{3\}$ & $\{2\}$ & $2$ & $1$, $3$ \\
		$1234$ & $s_2$ & & &
	\end{tabular}
	\quad
	\begin{tabular}[t]{l|l|c|c|c|c} 
		$\pi_3$ & $w_3$ & $U_3$ & $D_3$ & $\ell_3$ & $k_3$ \\
		\hline
		$1342$ & $\varepsilon$ & $\{3\}$ & $\{2\}$ & $2$ & $1$, \textcolor{red}{$\xcancel{\textcolor{black}{3}}$}
	\end{tabular}
	}
	\caption{$(\{3\},\{2 \})$-permutree sorting of~$\pi_1 \eqdef 3214$, $\pi_2 \eqdef 1324$ and~$\pi_3 \eqdef 1342$.}
	\label{tab:algo2-3-2}
\end{table}

\begin{table}[h!]
	\centerline{
	\begin{tabular}[t]{l|l|c|c|c|c}
		$\pi_4$ & $w_4$ & $U_4$ & $D_4$ & $\ell_4$ & $k_4$ \\
		\hline
		$54213$ & $\varepsilon$ & $\{2\}$ & $\{4\}$ & $3$ & . \\
		$53214$ & $s_3$ & $\{2\}$ & $\{3\}$ & $2$ & . \\
		$52314$ & $s_3 \cdot s_2$ & $\{3\}$ & $\{2\}$ & $1$ & . \\
		$51324$ & $s_3 \cdot s_2 \cdot s_1$ & $\{3\}$ & $\{1\}$ & $4$ & . \\
		$41325$ & $s_3 \cdot s_2 \cdot s_1 \cdot s_4$ & $\{3\}$ & $\{1\}$ & $3$ & . \\
		$31425$ & $s_3 \cdot s_2 \cdot s_1 \cdot s_4 \cdot s_3$ & $\{4\}$ & $\{1\}$ & $2$ & . \\
		$21435$ & $s_3 \cdot s_2 \cdot s_1 \cdot s_4 \cdot s_3 \cdot s_2$ & $\{4\}$ & $\{1\}$ & $1$ & . \\
		$12435$ & $s_3 \cdot s_2 \cdot s_1 \cdot s_4 \cdot s_3 \cdot s_2 \cdot s_1$ & $\{4\}$ & $\{1\}$ & $3$ & $4$ \\
		$12345$ & $s_3 \cdot s_2 \cdot s_1 \cdot s_4 \cdot s_3 \cdot s_2 \cdot s_1 \cdot s_3$ & $\{4\}$ & $\{1\}$ &
	\end{tabular}
	\qquad
	\begin{tabular}[t]{l|l|c|c|c|c}
		$\pi_5$ & $w_5$ & $U_5$ & $D_5$ & $\ell_5$ & $k_5$ \\
		\hline
		$15342$ & $\varepsilon$ & $\{2\}$ & $\{4\}$ & $2$ & . \\
		$15243$ & $s_2$ & $\{3\}$ & $\{4\}$ & $3$ & . \\
		$15234$ & $s_2 \cdot s_3$ & $\{4\}$ & $\{3\}$ & $4$ & . \\
		$14235$ & $s_2 \cdot s_3 \cdot s_5$ & $\{5\}$ & $\{3\}$ & $3$ & \textcolor{red}{$\xcancel{\textcolor{black}{2}}$}
	\end{tabular}
	}
	\caption{$(\{2\},\{4 \})$-permutree sorting of~$\pi_4 \eqdef 54213$ and~$\pi_5 \eqdef 15342$.}
	\label{tab:algo2-2-4}
\end{table}
\end{example}

\begin{corollary}\label{coro:algorithmIntersection}
For any permutation~$\pi$ and any disjoint subsets~$U$ and~$D$ of~$\{2, \dots, n-1\}$, \cref{algo:permutreeSorting} returns a reduced word~$w$ accepted by $\automatonP(U,D)$ with the property that~$w$ is a reduced word for~$\pi$ if and only if~$\pi$ avoids~$jki$ for~$j \in U$ and~$kij$ for~$j \in D$.
\end{corollary}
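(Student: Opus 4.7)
I will prove the corollary by strong induction on $\length(\pi)$, matching the three-way branching of \cref{algo:permutreeSorting}. The base case $\pi = \mathrm{id}$ is immediate: both \texttt{if} guards fail (no descents), the algorithm returns $\varepsilon$, which is accepted by every $\automatonP(U,D)$ and is the reduced expression for the identity, so the equivalence holds trivially.

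Suppose the algorithm enters the first \texttt{if} branch at a descent $\ell$ with $\ell+1 \notin U$ and $\ell \notin D$. The recursive descriptions of $\automatonU(j)$ and $\automatonD(j)$ in \cref{fig:automataRecursive} show that under these conditions $s_\ell$ carries no component automaton to its ill state: each component either loops or shifts its token~$j$, which is precisely what $\moveU(U,\ell)$ and $\moveD(D,\ell)$ encode. Thus $s_\ell$ moves the initial state of $\automatonP(U,D)$ to the initial state of $\automatonP(\moveU(U,\ell), \moveD(D,\ell))$. The inductive hypothesis applied to $s_\ell \cdot \pi$ with the updated sets produces a word $w'$ with the desired properties; prepending $s_\ell$ gives a word still accepted by $\automatonP(U,D)$, and the equivalence for the avoidance condition transfers between $(U',D')$-minimality of $s_\ell \cdot \pi$ and $(U,D)$-minimality of $\pi$ via Lemmas \ref{lem:vincent2} and \ref{lem:vincent4} applied componentwise to each $j \in U$ and $j \in D$.

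If instead the algorithm enters the second \texttt{if} branch at a descent $\ell$, at least one of $\ell+1 \in U$ or $\ell \in D$ must hold; assume by symmetry that $\ell+1 \in U$ with $\pi([\ell+1]) = [\ell+1]$. Then $s_\ell$ sends $\automatonU(\ell+1)$ from its initial state to its ill state, whose only non-loop transition is the kill transition $s_{\ell+1}$. The hypothesis $\pi([\ell+1]) = [\ell+1]$ is preserved by left multiplication by $s_\ell$ (which swaps two values inside $[\ell+1]$) and forces every reduced expression for $s_\ell \cdot \pi$ to lie in $\langle s_1, \dots, s_\ell, s_{\ell+2}, \dots, s_{n-1}\rangle$, so the kill transition is never taken and the ill state is a safe trap. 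Dropping $\ell+1$ from the tracked set $U$ before applying $\moveU$ is therefore legitimate, and the remaining verification mirrors the first branch; the symmetric case $\ell \in D$ with $\pi([\ell-1]) = [\ell-1]$ is dual.

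Finally, if the algorithm falls through to the last line and returns $\varepsilon$ with $\pi \ne \mathrm{id}$, I must show $\pi$ is not $(U,D)$-permutree minimal. Suppose otherwise. By \cref{coro:permutreeMinimal}, $\pi$ admits a reduced expression $w = s_{\ell^*} \cdot w'$ accepted by $\automatonP(U,D)$. Since Branch~1 did not apply to $\ell^*$, either $\ell^*+1 \in U$ or $\ell^* \in D$. In the former case, $s_{\ell^*}$ drives $\automatonU(\ell^*+1)$ into its ill state, from which $w'$ cannot use $s_{\ell^*+1}$ without dying; hence $s_{\ell^*} \cdot \pi$ lies in $\fS_{[\ell^*+1]} \times \fS_{[n] \ssm [\ell^*+1]}$, equivalently $\pi([\ell^*+1]) = [\ell^*+1]$. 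A symmetric argument handles $\ell^* \in D$. Either way, $\ell^*$ satisfies Branch~2's guard, contradicting the assumption that the algorithm reached the final line. I expect this final case to be the main obstacle: the inference from ``accepted suffix'' to ``structural decomposition of $\pi$'' hinges on the precise structure of the ill state of $\automatonU(\ell^*+1)$, namely that $s_{\ell^*+1}$ is its unique non-loop transition, and pinning that down cleanly is the conceptual heart of why the two guards of the algorithm cover exactly the accepted words of $\automatonP(U,D)$.
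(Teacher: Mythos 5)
Your proof is correct, and its skeleton --- an induction on $\length(\pi)$ that follows the three branches of \cref{algo:permutreeSorting}, transporting both the automaton state and the avoidance condition componentwise through \cref{lem:vincent2,lem:vincent4} --- matches the paper's argument. Where you genuinely diverge is in the justification of the two delicate steps. For the second branch, the paper appeals to \cref{prop:sameStateAcceptedReducedExpressionsRefined} to conclude that under the guard $\pi([\ell+1]) = [\ell+1]$ the automaton $\automatonU(\ell+1)$ accepts every reduced expression of~$\pi$ and may be discarded; you instead argue directly that the guard confines $s_\ell \cdot \pi$ to the parabolic subgroup avoiding $s_{\ell+1}$, so the ill state is a trap that is never exited. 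The two arguments are equivalent, but yours tacitly relies on the invariant that the word built by the algorithm is always a prefix of a reduced expression of the current permutation (so that ``every reduced expression of $s_\ell \cdot \pi$ avoids $s_{\ell+1}$'' really applies to the returned suffix~$w'$ even when $w'$ fails to be a reduced expression for $s_\ell \cdot \pi$); this invariant is immediate from the guards, which only ever apply a transposition realizing an inversion, but it deserves a sentence. For the final branch, the paper simply asserts that every reduced expression then reaches a dead state, whereas you prove the contrapositive by extracting an accepted reduced expression from \cref{coro:permutreeMinimal} and showing that its first letter would have triggered one of the two guards --- a genuine gain in rigor that also correctly covers the case where both $\ell^*+1 \in U$ and $\ell^* \in D$, since the two parabolic conclusions are derived independently from the same accepted word. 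The one small addition I would ask for is the observation that the guard $\pi([\ell+1]) = [\ell+1]$ (resp.~$\pi([\ell-1]) = [\ell-1]$) by itself forces $\pi$ to avoid $(\ell+1)ki$ (resp.~$ki\ell$), which is needed to close the ``$w$ is a reduced expression $\Rightarrow$ $\pi$ avoids'' direction for the discarded index; alternatively, that direction follows globally from \cref{thm:patternAvoidance} once acceptance of~$w$ by all of $\automatonP(U,D)$ is established.
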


\begin{proof}
This algorithm constructs a candidate reduced word for $\pi$ following the automaton~$\automatonP(U,D)$ and prioritizing healthy states over ill states. 
It begins by checking all possible transitions~$s_\ell$ that keep $\automatonP(U,D)$ in healthy states following \cref{lem:vincent4} (if condition in line~2).
Doing this in the intersection of automata translates to updating~$\ell$ to~$\ell+1$ when~$\ell \in U$ and $\ell+1$ to~$\ell$ when~$\ell+1 \in D$ (line~3).
When we have exhausted all these transitions, we need to go to an ill state of~$\automatonP(U,D)$, \ie to apply a transposition that sends at least one automaton of the intersection to an ill state.
If there is~$\ell+1 \in U$ (resp.~$\ell \in D$) such that~$s_\ell$ is a descent of~$\pi$ and~$\pi([\ell+1]) = [\ell+1]$ (resp.~$\pi([\ell-1]) = [\ell-1]$), then any reduced word for~$\pi$ is accepted by the automaton~$\automatonU(\ell)$ (resp.~$\automatonD(\ell)$) by \cref{prop:sameStateAcceptedReducedExpressionsRefined}\,(i). 
We can thus start with~$s_\ell$ and forget about the automaton~$\automatonU(\ell)$ (resp.~$\automatonD(\ell)$) (lines 4, 5 and 6).
Finally, if none of these options are possible, any reduced word for~$\pi$ will lead to a dead state in at least one of the automata, so that $\pi$ is not $(U,D)$-sortable.
We thus return the empty reduced word (line 7).
\end{proof}

\subsection{Generating trees}

As in \cref{subsec:trees}, we can define natural generating trees for the $(U,D)$-permutree minimal permutations.
Namely, fix an arbitrary priority order~$\prec$ on~$\{s_1, \dots, s_{n-1}\}$.
For an $(U,D)$-permutree minimal permutation~$\pi$, we denote by~$\pi(U, D, \prec)$ the $\prec$-lexicographic minimal reduced word for~$\pi$ that is accepted by~$\automatonP(U,D)$.
We denote by~$\lexmin(n, U, D, \prec)$ the set of reduced words of the form~$\pi(U, D, \prec)$ for all $(U,D)$-permutree minimal permutations~$\pi \in \fS_n$.
The same proof as that of \cref{prop:generatingTrees} shows that $\lexmin(n, U, D, \prec)$ is closed by prefix.
This yields a natural generating tree on~$\lexmin(n, U, D, \prec)$ where the parent of a reduced word~$w$ is obtained by deleting its last letter.
Replacing each reduced word by the corresponding permutation, this provides a generating tree for the $(U,D)$-permutree minimal permutations of~$\fS_n$.
\cref{fig:TreeCompleteOrientations} presents these generating trees for different values of~$U$ and~$D$.

\hvFloat[floatPos=p, capWidth=h, capPos=r, capAngle=90, objectAngle=90, capVPos=c, objectPos=c]{figure}
{
	\begin{tabular}{cccc}
		$\automatonU(2) \cap \automatonU(3)$
		&
		$\automatonU(2) \cap \automatonD(3)$
		&
		$\automatonU(3) \cap \automatonD(2)$
		&
		$\automatonD(2) \cap \automatonD(3)$
		\\[.5cm]
		\begin{tikzpicture}[shorten >=1pt, node distance=2cm, on grid, auto]
			\node[state,initial,accepting,minimum size=0.5cm] (hj)   {}; 
			\node[state,accepting,minimum size=0.5cm] (ij) [below= 1.5cm of hj] {}; 
			\node[state,minimum size=0.5cm] (dj) [below= 1.5cm of ij] {}; 
			\node[state,accepting,minimum size=0.5cm] (hj+1) [right= 1.5cm of hj] {};
			\node[state,accepting,minimum size=0.5cm] (ij+1) [below= 1.5cm of hj+1] {}; 
			\node[state,minimum size=0.5cm] (dj+1) [below= 1.5cm of ij+1] {}; 
			\node[state,accepting,minimum size=0.5cm] (hj+2) [right= 1.5cm of hj+1] {};
			\node[state,accepting,minimum size=0.5cm] (ij+2) [below= 1.5cm of hj+2] {}; 
			\path[->] 
				(hj) edge node [swap] {$s_1$} (ij)
					   edge node {$s_2$} (hj+1)
				(ij) edge node [swap] {$s_2$} (dj)
				(hj+1) edge node [swap] {$s_2$} (ij+1)
					 edge node {$s_3$} (hj+2)
				(ij+1) edge node [swap] {$s_3$} (dj+1)
				(hj+2) edge node [swap] {$s_3$} (ij+2);
		\end{tikzpicture}
		&
		\begin{tikzpicture}[shorten >=1pt, node distance=2cm, on grid, auto]
			\node[state,initial,accepting,minimum size=0.5cm] (hj)   {}; 
			\node[state,accepting,minimum size=0.5cm] (ij) [below= 1.5cm of hj] {}; 
			\node[state,minimum size=0.5cm] (dj) [below= 1.5cm of ij] {}; 
			\node[state,accepting,minimum size=0.5cm] (hj+1) [right= 1.5cm of hj] {};
			\node[state,accepting,minimum size=0.5cm] (ij+1) [below= 1.5cm of hj+1] {}; 
			\node[state,minimum size=0.5cm] (dj+1) [below= 1.5cm of ij+1] {}; 
			\node[state,accepting,minimum size=0.5cm] (hj+2) [right= 1.5cm of hj+1] {};
			\node[state,accepting,minimum size=0.5cm] (ij+2) [below= 1.5cm of hj+2] {}; 
			\path[->] 
				(hj) edge node [swap] {$s_1$} (ij)
					   edge node {$s_2$} (hj+1)
				(ij) edge node [swap] {$s_2$} (dj)
				(hj+1) edge node [swap] {$s_2$} (ij+1)
					 edge node {$s_3$} (hj+2)
				(ij+1) edge node [swap] {$s_3$} (dj+1)
				(hj+2) edge node [swap] {$s_3$} (ij+2);
		\end{tikzpicture}
		&
		\begin{tikzpicture}[shorten >=1pt, node distance=2cm, on grid, auto]
			\node[state,initial,accepting,minimum size=0.5cm] (hj)   {}; 
			\node[state,accepting,minimum size=0.5cm] (ij) [below= 1.5cm of hj] {}; 
			\node[state,minimum size=0.5cm] (dj) [below= 1.5cm of ij] {}; 
			\node[state,accepting,minimum size=0.5cm] (hj+1) [right= 1.5cm of hj] {};
			\node[state,accepting,minimum size=0.5cm] (ij+1) [below= 1.5cm of hj+1] {}; 
			\path[->] 
				(hj) edge node [swap] {$s_2$} (ij)
					   edge node {$s_3$} (hj+1)
				(ij) edge node [swap] {$s_3$} (dj)
				(hj+1) edge node [swap] {$s_3$} (ij+1);
		\end{tikzpicture}
		&
		\begin{tikzpicture}[shorten >=1pt, node distance=2cm, on grid, auto]
			\node[state,initial,accepting,minimum size=0.5cm] (hj)   {}; 
			\node[state,accepting,minimum size=0.5cm] (ij) [below= 1.5cm of hj] {}; 
			\node[state,minimum size=0.5cm] (dj) [below= 1.5cm of ij] {}; 
			\node[state,accepting,minimum size=0.5cm] (hj-1) [right= 1.5cm of hj] {};
			\node[state,accepting,minimum size=0.5cm] (ij-1) [below= 1.5cm of hj-1] {}; 
			\path[->] 
				(hj) edge node [swap] {$s_2$} (ij)
					   edge node {$s_1$} (hj-1)
				(ij) edge node [swap] {$s_1$} (dj)
				(hj-1) edge node [swap] {$s_1$} (ij-1);
		\end{tikzpicture}
		\\[.5cm]
		\begin{tikzpicture}[shorten >=1pt, node distance=2cm, on grid, auto]
			\node[state,initial,accepting,minimum size=0.5cm] (hj)   {}; 
			\node[state,accepting,minimum size=0.5cm] (ij) [below= 1.5cm of hj] {}; 
			\node[state,minimum size=0.5cm] (dj) [below= 1.5cm of ij] {}; 
			\node[state,accepting,minimum size=0.5cm] (hj+1) [right= 1.5cm of hj] {};
			\node[state,accepting,minimum size=0.5cm] (ij+1) [below= 1.5cm of hj+1] {}; 
			\path[->] 
				(hj) edge node [swap] {$s_2$} (ij)
					   edge node {$s_3$} (hj+1)
				(ij) edge node [swap] {$s_3$} (dj)
				(hj+1) edge node [swap] {$s_3$} (ij+1);
		\end{tikzpicture}
		&
		\begin{tikzpicture}[shorten >=1pt, node distance=2cm, on grid, auto]
			\node[state,initial,accepting,minimum size=0.5cm] (hj)   {}; 
			\node[state,accepting,minimum size=0.5cm] (ij) [below= 1.5cm of hj] {}; 
			\node[state,minimum size=0.5cm] (dj) [below= 1.5cm of ij] {}; 
			\node[state,accepting,minimum size=0.5cm] (hj-1) [right= 1.5cm of hj] {};
			\node[state,accepting,minimum size=0.5cm] (ij-1) [below= 1.5cm of hj-1] {}; 
			\node[state,minimum size=0.5cm] (dj-1) [below= 1.5cm of ij-1] {}; 
			\node[state,accepting,minimum size=0.5cm] (hj-2) [right= 1.5cm of hj-1] {};
			\node[state,accepting,minimum size=0.5cm] (ij-2) [below= 1.5cm of hj-2] {}; 
			\path[->] 
				(hj) edge node [swap] {$s_3$} (ij)
					   edge node {$s_2$} (hj-1)
				(ij) edge node [swap] {$s_2$} (dj)
				(hj-1) edge node [swap] {$s_2$} (ij-1)
					 edge node {$s_1$} (hj-2)
				(ij-1) edge node [swap] {$s_1$} (dj-1)
				(hj-2) edge node [swap] {$s_1$} (ij-2);
		\end{tikzpicture}
		&
		\begin{tikzpicture}[shorten >=1pt, node distance=2cm, on grid, auto]
			\node[state,initial,accepting,minimum size=0.5cm] (hj)   {}; 
			\node[state,accepting,minimum size=0.5cm] (ij) [below= 1.5cm of hj] {}; 
			\node[state,minimum size=0.5cm] (dj) [below= 1.5cm of ij] {}; 
			\node[state,accepting,minimum size=0.5cm] (hj-1) [right= 1.5cm of hj] {};
			\node[state,accepting,minimum size=0.5cm] (ij-1) [below= 1.5cm of hj-1] {}; 
			\path[->] 
				(hj) edge node [swap] {$s_2$} (ij)
					   edge node {$s_1$} (hj-1)
				(ij) edge node [swap] {$s_1$} (dj)
				(hj-1) edge node [swap] {$s_1$} (ij-1);
		\end{tikzpicture}
		&
		\begin{tikzpicture}[shorten >=1pt, node distance=2cm, on grid, auto]
			\node[state,initial,accepting,minimum size=0.5cm] (hj)   {}; 
			\node[state,accepting,minimum size=0.5cm] (ij) [below= 1.5cm of hj] {}; 
			\node[state,minimum size=0.5cm] (dj) [below= 1.5cm of ij] {}; 
			\node[state,accepting,minimum size=0.5cm] (hj-1) [right= 1.5cm of hj] {};
			\node[state,accepting,minimum size=0.5cm] (ij-1) [below= 1.5cm of hj-1] {}; 
			\node[state,minimum size=0.5cm] (dj-1) [below= 1.5cm of ij-1] {}; 
			\node[state,accepting,minimum size=0.5cm] (hj-2) [right= 1.5cm of hj-1] {};
			\node[state,accepting,minimum size=0.5cm] (ij-2) [below= 1.5cm of hj-2] {}; 
			\path[->] 
				(hj) edge node [swap] {$s_3$} (ij)
					   edge node {$s_2$} (hj-1)
				(ij) edge node [swap] {$s_2$} (dj)
				(hj-1) edge node [swap] {$s_2$} (ij-1)
					 edge node {$s_1$} (hj-2)
				(ij-1) edge node [swap] {$s_1$} (dj-1)
				(hj-2) edge node [swap] {$s_1$} (ij-2);
		\end{tikzpicture}
		\\[.5cm]
		\begin{tikzpicture}[xscale=.9, yscale=0.7, color=lightgray]
			\node[blue](P1234) at (0,0){1234};
			\node[blue](P2134) at (-1,1.5){2134};
			\node[blue](P1324) at ( 0,1.5){1324};
			\node[blue](P1243) at ( 1,1.5){1243};
			\node(P2314) at (-2,3){2314};
			\node[blue](P3124) at (-1,3){3124};
			\node[blue](P2143) at ( 0,3){2143};
			\node(P1342) at ( 1,3){1342};
			\node[blue](P1423) at ( 2,3){1423};
			\node[blue](P3214) at (-2.5,4.5){3214};
			\node(P2341) at (-1.5,4.5){2341};
			\node(P3142) at (-0.5,4.5){3142};
			\node(P2413) at ( 0.5,4.5){2413};
			\node[blue](P4123) at ( 1.5,4.5){4123};
			\node[blue](P1432) at ( 2.5,4.5){1432};
			\node(P3241) at (-2,6){3241};
			\node(P2431) at (-1,6){2431};
			\node(P3412) at ( 0,6){3412};
			\node[blue](P4213) at ( 1,6){4213};
			\node[blue](P4132) at ( 2,6){4132};
			\node(P3421) at (-1,7.5){3421};
			\node(P4231) at ( 0,7.5){4231};
			\node[blue](P4312) at ( 1,7.5){4312};
			\node[blue](P4321) at (0,9){4321};
			\draw[line width=0.5mm,blue](P1234) -- (P2134);
			\draw[line width=0.5mm,red](P1234) -- (P1324);
			\draw[line width=0.5mm,green](P1234) -- (P1243);
			\draw(P2134) -- (P2314);
			\draw[line width=0.5mm,green](P2134) -- (P2143);
			\draw[line width=0.5mm,blue](P1324) -- (P3124);
			\draw(P1324) -- (P1342);
			\draw(P1243) -- (P2143);
			\draw[line width=0.5mm,red](P1243) -- (P1423);
			\draw(P2314) -- (P3214);
			\draw(P2314) -- (P2341);
			\draw[line width=0.5mm,red](P3124) -- (P3214);
			\draw(P3124) -- (P3142);
			\draw(P2143) -- (P2413);
			\draw(P1342) -- (P3142);
			\draw(P1342) -- (P1432);
			\draw[line width=0.5mm,blue](P1423) -- (P4123);
			\draw[line width=0.5mm,green](P1423) -- (P1432);
			\draw(P3214) -- (P3241);
			\draw(P2341) -- (P3241);
			\draw(P2341) -- (P2431);
			\draw(P3142) -- (P3412);
			\draw(P2413) -- (P4213);
			\draw(P2413) -- (P2431);
			\draw[line width=0.5mm,red](P4123) -- (P4213);
			\draw[line width=0.5mm,green](P4123) -- (P4132);
			\draw(P1432) -- (P4132);
			\draw(P3241) -- (P3421);
			\draw(P2431) -- (P4231);
			\draw(P3412) -- (P4312);
			\draw(P3412) -- (P3421);
			\draw(P4213) -- (P4231);
			\draw[line width=0.5mm,red](P4132) -- (P4312);
			\draw(P3421) -- (P4321);
			\draw(P4231) -- (P4321);
			\draw[line width=0.5mm,green](P4312) -- (P4321);
		\end{tikzpicture}
		&
		\begin{tikzpicture}[xscale=.9, yscale=0.7, color=lightgray]
			\node[blue](P1234) at (0,0){1234};
			\node[blue](P2134) at (-1,1.5){2134};
			\node[blue](P1324) at ( 0,1.5){1324};
			\node[blue](P1243) at ( 1,1.5){1243};
			\node(P2314) at (-2,3){2314};
			\node[blue](P3124) at (-1,3){3124};
			\node[blue](P2143) at ( 0,3){2143};
			\node[blue](P1342) at ( 1,3){1342};
			\node(P1423) at ( 2,3){1423};
			\node[blue](P3214) at (-2.5,4.5){3214};
			\node(P2341) at (-1.5,4.5){2341};
			\node[blue](P3142) at (-0.5,4.5){3142};
			\node(P2413) at ( 0.5,4.5){2413};
			\node(P4123) at ( 1.5,4.5){4123};
			\node[blue](P1432) at ( 2.5,4.5){1432};
			\node(P3241) at (-2,6){3241};
			\node(P2431) at (-1,6){2431};
			\node[blue](P3412) at ( 0,6){3412};
			\node(P4213) at ( 1,6){4213};
			\node(P4132) at ( 2,6){4132};
			\node[blue](P3421) at (-1,7.5){3421};
			\node(P4231) at ( 0,7.5){4231};
			\node[blue](P4312) at ( 1,7.5){4312};
			\node[blue](P4321) at (0,9){4321};
			\draw[line width=0.5mm,blue](P1234) -- (P2134);
			\draw[line width=0.5mm,red](P1234) -- (P1324);
			\draw[line width=0.5mm,green](P1234) -- (P1243);
			\draw(P2134) -- (P2314);
			\draw[line width=0.5mm,green](P2134) -- (P2143);
			\draw[line width=0.5mm,blue](P1324) -- (P3124);
			\draw[line width=0.5mm,green](P1324) -- (P1342);
			\draw(P1243) -- (P2143);
			\draw(P1243) -- (P1423);
			\draw(P2314) -- (P3214);
			\draw(P2314) -- (P2341);
			\draw[line width=0.5mm,red](P3124) -- (P3214);
			\draw[line width=0.5mm,green](P3124) -- (P3142);
			\draw(P2143) -- (P2413);
			\draw(P1342) -- (P3142);
			\draw[line width=0.5mm,red](P1342) -- (P1432);
			\draw(P1423) -- (P4123);
			\draw(P1423) -- (P1432);
			\draw(P3214) -- (P3241);
			\draw(P2341) -- (P3241);
			\draw(P2341) -- (P2431);
			\draw[line width=0.5mm,red](P3142) -- (P3412);
			\draw(P2413) -- (P4213);
			\draw(P2413) -- (P2431);
			\draw(P4123) -- (P4213);
			\draw(P4123) -- (P4132);
			\draw(P1432) -- (P4132);
			\draw(P3241) -- (P3421);
			\draw(P2431) -- (P4231);
			\draw[line width=0.5mm,blue](P3412) -- (P4312);
			\draw[line width=0.5mm,green](P3412) -- (P3421);
			\draw(P4213) -- (P4231);
			\draw(P4132) -- (P4312);
			\draw(P3421) -- (P4321);
			\draw(P4231) -- (P4321);
			\draw[line width=0.5mm,green](P4312) -- (P4321);
		\end{tikzpicture}
		&
		\begin{tikzpicture}[xscale=.9, yscale=0.7, color=lightgray]
			\node[blue](P1234) at (0,0){1234};
			\node[blue](P2134) at (-1,1.5){2134};
			\node[blue](P1324) at ( 0,1.5){1324};
			\node[blue](P1243) at ( 1,1.5){1243};
			\node[blue](P2314) at (-2,3){2314};
			\node(P3124) at (-1,3){3124};
			\node[blue](P2143) at ( 0,3){2143};
			\node(P1342) at ( 1,3){1342};
			\node[blue](P1423) at ( 2,3){1423};
			\node[blue](P3214) at (-2.5,4.5){3214};
			\node(P2341) at (-1.5,4.5){2341};
			\node(P3142) at (-0.5,4.5){3142};
			\node[blue](P2413) at ( 0.5,4.5){2413};
			\node(P4123) at ( 1.5,4.5){4123};
			\node[blue](P1432) at ( 2.5,4.5){1432};
			\node(P3241) at (-2,6){3241};
			\node[blue](P2431) at (-1,6){2431};
			\node(P3412) at ( 0,6){3412};
			\node[blue](P4213) at ( 1,6){4213};
			\node(P4132) at ( 2,6){4132};
			\node(P3421) at (-1,7.5){3421};
			\node[blue](P4231) at ( 0,7.5){4231};
			\node(P4312) at ( 1,7.5){4312};
			\node[blue](P4321) at (0,9){4321};
			\draw[line width=0.5mm,blue](P1234) -- (P2134);
			\draw[line width=0.5mm,red](P1234) -- (P1324);
			\draw[line width=0.5mm,green](P1234) -- (P1243);
			\draw[line width=0.5mm,red](P2134) -- (P2314);
			\draw[line width=0.5mm,green](P2134) -- (P2143);
			\draw(P1324) -- (P3124);
			\draw(P1324) -- (P1342);
			\draw(P1243) -- (P2143);
			\draw[line width=0.5mm,red](P1243) -- (P1423);
			\draw[line width=0.5mm,blue](P2314) -- (P3214);
			\draw(P2314) -- (P2341);
			\draw(P3124) -- (P3214);
			\draw(P3124) -- (P3142);
			\draw[line width=0.5mm,red](P2143) -- (P2413);
			\draw(P1342) -- (P3142);
			\draw(P1342) -- (P1432);
			\draw(P1423) -- (P4123);
			\draw[line width=0.5mm,green](P1423) -- (P1432);
			\draw(P3214) -- (P3241);
			\draw(P2341) -- (P3241);
			\draw(P2341) -- (P2431);
			\draw(P3142) -- (P3412);
			\draw[line width=0.5mm,blue](P2413) -- (P4213);
			\draw[line width=0.5mm,green](P2413) -- (P2431);
			\draw(P4123) -- (P4213);
			\draw(P4123) -- (P4132);
			\draw(P1432) -- (P4132);
			\draw(P3241) -- (P3421);
			\draw(P2431) -- (P4231);
			\draw(P3412) -- (P4312);
			\draw(P3412) -- (P3421);
			\draw[line width=0.5mm,green](P4213) -- (P4231);
			\draw(P4132) -- (P4312);
			\draw(P3421) -- (P4321);
			\draw[line width=0.5mm,red](P4231) -- (P4321);
			\draw(P4312) -- (P4321);
		\end{tikzpicture}
		&
    	\begin{tikzpicture}[xscale=.9, yscale=0.7, color=lightgray]
			\node[blue](P1234) at (0,0){1234};
			\node[blue](P2134) at (-1,1.5){2134};
			\node[blue](P1324) at ( 0,1.5){1324};
			\node[blue](P1243) at ( 1,1.5){1243};
			\node[blue](P2314) at (-2,3){2314};
			\node(P3124) at (-1,3){3124};
			\node[blue](P2143) at ( 0,3){2143};
			\node[blue](P1342) at ( 1,3){1342};
			\node(P1423) at ( 2,3){1423};
			\node[blue](P3214) at (-2.5,4.5){3214};
			\node[blue](P2341) at (-1.5,4.5){2341};
			\node(P3142) at (-0.5,4.5){3142};
			\node(P2413) at ( 0.5,4.5){2413};
			\node(P4123) at ( 1.5,4.5){4123};
			\node[blue](P1432) at ( 2.5,4.5){1432};
			\node[blue](P3241) at (-2,6){3241};
			\node[blue](P2431) at (-1,6){2431};
			\node(P3412) at ( 0,6){3412};
			\node(P4213) at ( 1,6){4213};
			\node(P4132) at ( 2,6){4132};
			\node[blue](P3421) at (-1,7.5){3421};
			\node(P4231) at ( 0,7.5){4231};
			\node(P4312) at ( 1,7.5){4312};
			\node[blue](P4321) at (0,9){4321};
			\draw[line width=0.5mm,blue](P1234) -- (P2134);
			\draw[line width=0.5mm,red](P1234) -- (P1324);
			\draw[line width=0.5mm,green](P1234) -- (P1243);
			\draw[line width=0.5mm,red](P2134) -- (P2314);
			\draw[line width=0.5mm,green](P2134) -- (P2143);
			\draw(P1324) -- (P3124);
			\draw[line width=0.5mm,green](P1324) -- (P1342);
			\draw(P1243) -- (P2143);
			\draw(P1243) -- (P1423);
			\draw[line width=0.5mm,blue](P2314) -- (P3214);
			\draw[line width=0.5mm,green](P2314) -- (P2341);
			\draw(P3124) -- (P3214);
			\draw(P3124) -- (P3142);
			\draw(P2143) -- (P2413);
			\draw(P1342) -- (P3142);
			\draw[line width=0.5mm,red](P1342) -- (P1432);
			\draw(P1423) -- (P4123);
			\draw(P1423) -- (P1432);
			\draw[line width=0.5mm,green](P3214) -- (P3241);
			\draw(P2341) -- (P3241);
			\draw[line width=0.5mm,red](P2341) -- (P2431);
			\draw(P3142) -- (P3412);
			\draw(P2413) -- (P4213);
			\draw(P2413) -- (P2431);
			\draw(P4123) -- (P4213);
			\draw(P4123) -- (P4132);
			\draw(P1432) -- (P4132);
			\draw[line width=0.5mm,red](P3241) -- (P3421);
			\draw(P2431) -- (P4231);
			\draw(P3412) -- (P4312);
			\draw(P3412) -- (P3421);
			\draw(P4213) -- (P4231);
			\draw(P4132) -- (P4312);
			\draw[line width=0.5mm,blue](P3421) -- (P4321);
			\draw(P4231) -- (P4321);
			\draw(P4312) -- (P4321);
    	\end{tikzpicture}
	\end{tabular}
}
{Generating trees for the $(U,D)$-permutree minimal permutations of~$\fS_4$, with priority order $s_1 \prec s_2 \prec s_3$.}
{fig:TreeCompleteOrientations}


\section{Permutree sorting versus Coxeter sorting}\label{sec:coxeterSortable}

In this section, we discuss the particular case when~$U$ and~$D$ form a partition of~$\{2, \dots, n-1\}$.
In that situation, we connect the $(U,D)$-permutree sorting with the $c$-sorting of N.~Reading~\cite{Reading-sortableElements}.

\subsection{Coxeter sorting word and Coxeter sortable permutations}\label{subsec:csorting}

We first recall the theory of $c$-sorting developed by N.~Reading in~\cite{Reading-sortableElements}.
While it was defined in arbitrary finite Coxeter groups, we focus on the symmetric group in this presentation.

We consider a \defn{Coxeter element}~$c$ of~$\fS_n$, \ie the product of all simple transpositions $\{s_1, \dots, s_{n-1}\}$ in an arbitrary order.
For a permutation $\pi \in \fS_n$, the \defn{$c$-sorting word} $\pi(c)$ is the lexicographically smallest reduced word for $\pi$ in the infinite word $c^\infty = c \cdot c \cdot c \cdot c \cdots$.
Note that strictly speaking, $\pi(c)$ depends on a reduced word for~$c$, not only on the Coxeter element~$c$.
Here, we assume that we have chosen a reduced word and hide this dependence.
We let~$I_1, \dots, I_p$ denote the subsets of~$[n-1]$ such that~$\pi(c) = c_{I_1} \cdot c_{I_2} \cdots c_{I_p}$ where~$c_I$ is the subword of~$c$ obtained by keeping only the letters~$s_i$ for~$i \in I$.
The permutation $\pi$ is \defn{$c$-sortable} if~$I_1 \supseteq I_2 \supseteq \dots \supseteq I_p$.
Note that this does not depend on the choice of the reduced word~$c$, only on the Coxeter element~$c$.

For our proofs we will need some simple yet useful facts from \cite{Reading-sortableElements} on how prefixes of words influence sortability.

\begin{lemma}\label{lem:coxeterElementFacts}
Consider a Coxeter element of the form~$c = s_\ell \cdot d$ and let $\pi \in \fS_n$. Then
\begin{itemize}
	\item if $\pi = s_\ell \cdot \tau$ with $\ell(\pi) = \ell(\tau) + 1$, then $\pi(c) = s_\ell \cdot \tau(d \cdot s_\ell)$,
	\item otherwise, $\pi(c) = \pi(d \cdot s_\ell)$.
\end{itemize}
\end{lemma}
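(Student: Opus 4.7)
The plan is to unfold the definition of the $c$-sorting word and exploit the identity $c^\infty = s_\ell \cdot (d \cdot s_\ell)^\infty$. I would first recall (or prove in one line) the greedy characterization of $\pi(c)$: scanning $c^\infty$ from left to right, one includes the current letter $s_i$ in the sorting word if and only if $s_i$ is a left descent of the remaining permutation (\ie if multiplying the remaining permutation on the left by $s_i$ strictly decreases the length). This greedy rule is well known to produce the lexicographically smallest reduced subword of $c^\infty$ representing $\pi$, and it is the cleanest tool for this lemma.

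Next I would treat the two cases separately. In the first case, the hypothesis $\pi = s_\ell \cdot \tau$ with $\length(\pi) = \length(\tau) + 1$ says exactly that $s_\ell$ is a left descent of $\pi$, so the greedy rule includes the first letter $s_\ell$ of $c^\infty$ and leaves us looking for the lexicographically smallest reduced subword of $d \cdot s_\ell \cdot d \cdot s_\ell \cdots = (d \cdot s_\ell)^\infty$ representing $\tau$. Since $d \cdot s_\ell$ is still a Coxeter element (it is just the cyclic shift of $c$), this remaining subword is by definition $\tau(d \cdot s_\ell)$, yielding $\pi(c) = s_\ell \cdot \tau(d \cdot s_\ell)$. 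In the second case, $s_\ell$ is not a left descent of $\pi$, so no reduced expression for $\pi$ starts with $s_\ell$ and the greedy rule skips the first letter of $c^\infty$. We are again reduced to reading $(d \cdot s_\ell)^\infty$, now searching for a reduced subword for~$\pi$ itself, which gives $\pi(c) = \pi(d \cdot s_\ell)$.

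The only delicate point, and the one I would state carefully, is the equivalence between the lex-first subword definition used in \cref{subsec:csorting} and the greedy rule used in the proof; once this is in place, both cases follow immediately. As a sanity check I would verify that the choice of a reduced expression for the cyclic shift $d \cdot s_\ell$ does not matter (only the Coxeter element does), which is consistent with the conventions fixed at the start of \cref{subsec:csorting}. No induction or case analysis on $\pi$ beyond this single dichotomy on the left descent $s_\ell$ is needed.
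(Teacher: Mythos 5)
Your argument is correct, but note that the paper does not actually prove this lemma: it is imported as a known fact from N.~Reading's work on sortable elements, so there is no in-paper proof to compare against. Your greedy/leftmost reading of the definition is exactly the right way to make the statement self-contained: the positionally lexicographically first reduced subword of $c^\infty$ is built by scanning left to right and taking a letter precisely when it is a left descent of what remains (the ``can always skip'' half uses that every suffix of $c^\infty$ still contains every generator infinitely often, hence a reduced subword for any element), and the identity $c^\infty = s_\ell \cdot (d \cdot s_\ell)^\infty$ then gives both cases of the dichotomy immediately. You are also right that the only delicate point is the equivalence of ``lex-first subword'' with the greedy rule, and that is worth a line. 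One small caveat: your closing sanity check is slightly misstated. The paper explicitly warns that $\pi(c)$ depends on the chosen \emph{word} for $c$, not merely on the Coxeter element; what saves the lemma is that $d \cdot s_\ell$ in the statement is the specific rotation of the fixed word $c = s_\ell \cdot d$, so no independence claim is needed (and $c$-\emph{sortability}, as opposed to the sorting word, is what is word-independent). This does not affect the validity of your two cases.
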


\begin{lemma}\label{lem:vincent8}
Let~$s_i \ne s_j$ be two letters that appear in the $c$-sorting word $\pi(c)$ of a $c$-sortable permutation~$\pi$. Then
\begin{enumerate} 
	\item if $s_i$ appears before $s_j$ in $c$, then $s_i$ appears before $s_j$ in $\pi(c)$,
	\item if $s_j$ does not appear in between two occurrences of $s_i$ in $\pi(c)$, then it does not appear after these occurrences either.
\end{enumerate}
\end{lemma}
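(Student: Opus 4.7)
The plan is to work from the sortable decomposition $\pi(c) = c_{I_1} \cdot c_{I_2} \cdots c_{I_p}$ with $I_1 \supseteq I_2 \supseteq \dots \supseteq I_p$, relying on two structural facts: inside each block $c_{I_\ell}$ the letters appear in the order prescribed by $c$, and by nestedness, if $s_m \in I_\ell$ then $s_m \in I_{\ell'}$ for every $\ell' \le \ell$. In particular, the first occurrence of any letter that appears at all in $\pi(c)$ must sit inside the first block $c_{I_1}$, and each block $c_{I_\ell}$ contains any given simple transposition at most once.

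For part (1), I will locate the first occurrences of the two letters. The previous observation places the first $s_i$ inside $c_{I_1}$. Let $\ell_j$ be the smallest index with $s_j \in I_{\ell_j}$. If $\ell_j \ge 2$, then the first $s_j$ lives in a block strictly after $c_{I_1}$, hence after the first $s_i$. If $\ell_j = 1$, both letters appear in $c_{I_1}$, and order-preservation inside $c_{I_1}$ transfers the hypothesis ``$s_i$ before $s_j$ in $c$'' to ``$s_i$ before $s_j$ in $c_{I_1}$'', and therefore in $\pi(c)$.

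For part (2), let the two chosen occurrences of $s_i$ lie in blocks $I_{k_1}$ and $I_{k_2}$ with $k_1 < k_2$. I will argue by contradiction: suppose some $s_j$ appears after the second $s_i$. That tail $s_j$ sits either inside $c_{I_{k_2}}$ past the $s_i$ there, or in a later block $c_{I_\ell}$ with $\ell > k_2$; in either case nestedness forces $s_j \in I_{k_2}$, and therefore also $s_j \in I_{k_1}$. Now the copy of $s_j$ in $c_{I_{k_2}}$ cannot sit before the $s_i$ in that block, since this would place it strictly between the two distinguished $s_i$'s; order-preservation in $c_{I_{k_2}}$ then yields ``$s_j$ after $s_i$ in $c$''. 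Symmetrically, the copy of $s_j$ in $c_{I_{k_1}}$ cannot sit after the $s_i$ there, yielding ``$s_j$ before $s_i$ in $c$''. These two verdicts on the fixed pair $s_i, s_j$ inside $c$ contradict each other.

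The main obstacle is not a heavy computation but the careful case bookkeeping in part (2): one has to identify the three zones where a stray $s_j$ could lie between the two distinguished $s_i$'s (the boundary blocks $c_{I_{k_1}}$ and $c_{I_{k_2}}$, plus any intermediate block), and propagate membership across blocks via nestedness, so that the contradiction drops out of order-preservation inside a single block $c_{I_\ell}$.
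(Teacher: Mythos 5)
Your proof is correct and takes essentially the same route as the paper's (much terser) argument: both parts rest on the nested decomposition $\pi(c) = c_{I_1} \cdots c_{I_p}$ with $I_1 \supseteq \cdots \supseteq I_p$ and the fact that each block $c_{I_\ell}$ lists its letters in the order of $c$. Your part (2) is a careful write-up of exactly the contradiction the paper gestures at (``if $s_j$ appeared after the two occurrences of $s_i$ it would have to appear between them''), and the only superfluous step is the case $\ell_j \ge 2$ in part (1), which your own opening observation about nestedness already rules out.
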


\begin{proof}
We deal with the two statements separately:
	\begin{enumerate} 
		\item Immediate from the definition since both $s_i$ and $s_j$ appear in $\pi(c)$.
		\item Since $\pi$ is $c$-sortable, $\pi(c)$ is formed by a succession of subwords that are nested. Thus if $s_j$ were to appear after the occurrences of $s_i$ it would have to appear between them as well.
		\qedhere
	\end{enumerate}
\end{proof}

\subsection{Coxeter sorting via permutree automata}

A Coxeter element~$c$ of~$\fS_n$ defines a partition ${\{2, \dots, n-1\} = U_c \sqcup D_c}$, where~$U_c$ (resp.~$D_c$) consists of the elements~$j \in \{2, \dots, n-1\}$ such that~$s_j$ appears before (resp.~after) $s_{j-1}$ in~$c$.
For instance, when~$c = s_2 \cdot s_5 \cdot s_4 \cdot s_3 \cdot s_1 \cdot s_6$, we obtain~$U_c = \{2,4,5\}$ and~$D_c = \{3,6\}$.
Said differently, $j \in U$ (resp.~$j \in D$) if~$c$ is accepted by~$\automatonU(j)$ but not by~$\automatonD(j)$ (resp.~by~$\automatonD(j)$ but not by~$\automatonU(j)$).
The goal of this section is the following connection between the $c$-sorting of \cref{subsec:csorting} and the $(U_c, D_c)$-permutree sorting of \cref{sec:intersectionsAutomata}.

\begin{theorem}
\label{thm:csorting}
For any Coxeter element~$c$ and any permutation~$\pi$, the following assertions are equivalent:
\begin{enumerate}[(i)]
	\item $\pi$ is $c$-sortable,
	\item the $c$-sorting word~$\pi(c)$ is accepted by the automaton~$\automatonP(U_c, D_c)$,
	\item there exists a reduced word for~$\pi$ accepted by the automaton~$\automatonP(U_c, D_c)$,
	\item for each~$j \in \{2, \dots, n-1\}$, there exists a reduced word for $\pi$ that is accepted by the automaton~$\automatonU(j)$ if~$j \in U_c$ and~$\automatonD(j)$ if~$j \in D_c$,
	\item $\pi$ avoids $jki$ for~$j \in U_c$ and $kij$ for~$j \in D_c$.
\end{enumerate}
\end{theorem}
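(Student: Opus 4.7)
My plan is to prove the cycle (i) $\Rightarrow$ (ii) $\Rightarrow$ (iii) $\Rightarrow$ (iv) $\Rightarrow$ (v) $\Rightarrow$ (i), relying on previous results for most implications and tackling the two nontrivial ones by induction on $\length(\pi)$. Three of the implications come for free: (ii) $\Rightarrow$ (iii) is immediate since $\pi(c)$ is a reduced expression for $\pi$; (iii) $\Rightarrow$ (iv) follows from the definition of $\automatonP(U_c,D_c)$ as the intersection of the automata $\automatonU(j)$ and $\automatonD(j)$; and (iv) $\Leftrightarrow$ (v) follows from applying \cref{thm:patternAvoidance} separately for each $j \in U_c \cup D_c$. (Equivalently, one could deduce (iii) $\Leftrightarrow$ (v) directly from \cref{coro:permutreeMinimal} since $U_c$ and $D_c$ are disjoint by definition.) This reduces the task to proving (i) $\Rightarrow$ (ii) and (v) $\Rightarrow$ (i).

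Both remaining implications use the same inductive engine. Given $c = s_\ell \cdot d$, \cref{lem:coxeterElementFacts} recursively reduces the $c$-sorting word to a $(d \cdot s_\ell)$-sorting word of a shorter permutation (either $\pi$ itself, or $s_\ell \cdot \pi$ when $s_\ell$ is a descent of $\pi$). The key bookkeeping observation is that the Coxeter rotation $c \mapsto d \cdot s_\ell$ leaves the membership of $j$ in $U$ or $D$ unchanged for every $j \notin \{\ell, \ell+1\}$, and may only flip the memberships of $\ell$ and $\ell+1$ between $U$ and $D$ (when they belong to $\{2, \dots, n-1\}$).

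For (v) $\Rightarrow$ (i), I would show that if $\pi$ avoids the $(U_c, D_c)$-patterns then the relevant smaller permutation avoids the $(U_{d \cdot s_\ell}, D_{d \cdot s_\ell})$-patterns, using \cref{lem:vincent2}\,(2) and \cref{lem:vincent4}\,(2) to track how the flip of $\ell$ and $\ell+1$ between $U$ and $D$ matches the effect of the left multiplication by $s_\ell$ on pattern avoidance. The induction hypothesis then yields $(d \cdot s_\ell)$-sortability of the smaller permutation, and \cref{lem:coxeterElementFacts} assembles the nested subset decomposition that characterizes $c$-sortability of $\pi$. For (i) $\Rightarrow$ (ii), the standard fact that $c$-sortability descends under the rotation (an immediate consequence of \cref{lem:coxeterElementFacts} together with the nested-subsets definition) yields $(d \cdot s_\ell)$-sortability of the smaller permutation, so that by induction its $(d \cdot s_\ell)$-sorting word is accepted by $\automatonP(U_{d \cdot s_\ell}, D_{d \cdot s_\ell})$. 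I would then verify that reading (or not reading) the initial $s_\ell$ from the initial state of $\automatonP(U_c, D_c)$ lands in a state from which the remainder of the trajectory coincides with the accepted trajectory in $\automatonP(U_{d \cdot s_\ell}, D_{d \cdot s_\ell})$ under the natural identification of states.

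The main obstacle lies in the rotation bookkeeping: one must carry out a careful case analysis according to whether $\ell \in U_c$, $\ell \in D_c$, or $\ell \notin U_c \cup D_c$ (and similarly for $\ell+1$), and verify that the flip of $\ell$ and $\ell+1$ between $U$ and $D$ corresponds precisely to swapping the single-index automata $\automatonU(\ell) \leftrightarrow \automatonD(\ell)$ and $\automatonU(\ell+1) \leftrightarrow \automatonD(\ell+1)$ inside the intersection automaton. Once this correspondence is pinned down, both inductive steps follow from short verifications using the recursive definitions in \cref{fig:automataRecursive}.
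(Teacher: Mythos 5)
Your reduction of the theorem to two implications is sound: (ii)~$\Rightarrow$~(iii) and (iii)~$\Rightarrow$~(iv) are indeed immediate, and (iv)~$\Leftrightarrow$~(v)~$\Leftrightarrow$~(iii) follow from \cref{thm:patternAvoidance} and \cref{coro:permutreeMinimal}. Your rotation bookkeeping for the descent case is also correct: when $s_\ell$ is initial in $c$ one automatically has $\ell \in U_c$ and $\ell+1 \in D_c$ (up to the boundary cases $\ell \in \{1, n-1\}$), reading $s_\ell$ sends the initial state of $\automatonU(\ell)$ to that of $\automatonU(\ell+1)$ and of $\automatonD(\ell+1)$ to that of $\automatonD(\ell)$, and this matches $U_{d\cdot s_\ell} = U_c \symdif \{\ell,\ell+1\}$, $D_{d \cdot s_\ell} = D_c \symdif \{\ell,\ell+1\}$, with \cref{lem:vincent4}\,(2) transporting the pattern conditions. (Note the paper proves (i)~$\Rightarrow$~(ii) non-inductively, via \cref{lem:vincent8} and the nested-subword structure of $\pi(c)$, and closes the loop through (iii)~$\Rightarrow$~(ii)~$\Rightarrow$~(i) rather than (v)~$\Rightarrow$~(i); your cycle is a legitimate alternative.)

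The genuine gap is the case where the initial letter $s_\ell$ of $c$ is \emph{not} a left descent of $\pi$, which your proposal never addresses. Two problems arise there. First, the induction measure: $\length(\pi)$ does not decrease, so induction on $\length(\pi)$ alone does not terminate; you need a secondary induction on the rank (the paper restricts to the parabolic subgroup $W_{\langle s_\ell\rangle}$ and shrinks $c$ to $d$). Second, and more seriously for (v)~$\Rightarrow$~(i): in the non-descent case, $(d\cdot s_\ell)$-sortability of $\pi$ does \emph{not} imply $c$-sortability of $\pi$. By the nested-subsets definition, $c$-sortability additionally forces $s_\ell$ to appear in no block of $\pi(c)$, \ie $\pi \in W_{\langle s_\ell\rangle}$, equivalently $\pi([\ell]) = [\ell]$. (For instance $\pi = 312 = s_2\cdot s_1$ is $(s_2 s_1)$-sortable but not $(s_1 s_2)$-sortable.) So you must prove that pattern avoidance for $(U_c, D_c)$ together with ``$\ell$ and $\ell+1$ not reversed'' forces $\pi([\ell]) = [\ell]$. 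This is true but requires a dedicated argument using both $\ell \in U_c$ and $\ell+1 \in D_c$: any inversion $(k,i)$ with $i \le \ell < k$ combined with the avoidance of $\ell k' i'$ and $k' i' (\ell+1)$ pins the positions of $\ell$ and $\ell+1$ into a forbidden configuration $k\,\ell\,(\ell+1)\,i$, contradicting the assumption that $\ell$ precedes $\ell+1$. This is precisely the final paragraph of the paper's proof of \cref{lem:Vincent11}; without it your induction does not close.
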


The equivalences (iii) $\iff$ (iv) $\iff$ (v) were already established earlier.
Here, we aim at identifying the $c$-sorting word as a reduced word for~$\pi$ accepted by~$\automatonP(U_c, D_c)$.
We split the proof of the equivalences (i) $\iff$ (ii) $\iff$ (iii) into the following few lemmas.

\begin{lemma}\label{lem:vincent9}
The $c$-sorting word of a $c$-sortable permutation is recognized by~$\automatonU(j)$ for~$j \in U_c$ and by~$\automatonD(j)$ for~$j \in D_c$.
\end{lemma}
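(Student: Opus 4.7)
The plan is to proceed by induction on $\ell(\pi)$, stripping the first letter of the $c$-sorting word using \cref{lem:coxeterElementFacts}. The base case $\pi = e$ gives $\pi(c) = \varepsilon$, which is trivially accepted by every automaton in its initial state.

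For the inductive step, let $s_m$ be the first letter of $c$ that is a left descent of $\pi$, and let $c^{(m)}$ denote the Coxeter element obtained from $c$ by moving $s_m$ to the end. Iterating \cref{lem:coxeterElementFacts} --- once for each non-descent letter of $c$ preceding $s_m$ (which only cycles $c$), and once more for $s_m$ itself --- yields $\pi(c) = s_m \cdot \pi'(c^{(m)})$, where $\pi' \eqdef s_m \cdot \pi$ is $c^{(m)}$-sortable by standard sortable-element theory. The induction hypothesis then gives that $\pi'(c^{(m)})$ is accepted by $\automatonU(j')$ for every $j' \in U_{c^{(m)}}$ and by $\automatonD(j')$ for every $j' \in D_{c^{(m)}}$.

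The crucial combinatorial observation is that cycling $s_m$ to the end of $c$ alters only the relative order of $s_m$ with $s_{m-1}$ and $s_{m+1}$: one has $m \in D_{c^{(m)}}$ and $m+1 \in U_{c^{(m)}}$ unconditionally, while $j \in U_c \iff j \in U_{c^{(m)}}$ (and similarly for $D$) for any $j \notin \{m, m+1\}$. A case analysis on the position of $j$ relative to $m$ then dispatches most subcases routinely: when $j \notin \{m, m+1\}$, reading $s_m$ loops in the relevant automaton and the inductive hypothesis transfers; when $s_m$ is the ``progression'' transition (for instance $j = m \in U_c$ sending $\automatonU(j)$ into $\automatonU(j+1)$, or $j = m+1 \in D_c$ sending $\automatonD(j)$ into $\automatonD(j-1)$), the observation above places the new index in the correct set for induction.

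The main obstacle lies in the two subcases where $s_m$ pushes the automaton into an ill state, namely $j = m+1 \in U_c$ (transition $s_m = s_{j-1}$ in $\automatonU(j)$) and the symmetric $j = m \in D_c$ (transition $s_m = s_j$ in $\automatonD(j)$). In each, I must rule out any subsequent occurrence of the killing letter ($s_{m+1}$ or $s_{m-1}$, respectively) in $\pi(c)$. This is where \cref{lem:vincent8}\,(1) is decisive: the hypothesis $m+1 \in U_c$ (resp.\ $m \in D_c$) forces $s_{m+1}$ (resp.\ $s_{m-1}$) to precede $s_m$ in $c$, so \cref{lem:vincent8}\,(1) requires any occurrence of that letter in $\pi(c)$ to come strictly before the first $s_m$ of $\pi(c)$. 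Since $s_m$ is already the very first letter of $\pi(c)$, the killing letter cannot appear at all, the automaton stays in its ill accepting state, and the induction closes.
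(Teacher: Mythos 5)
Your architecture is genuinely different from the paper's: the paper proves this lemma directly, with no induction, by observing that \cref{lem:vincent8}\,(1) blocks the rejecting path through the first column of~$\automatonU(j)$ and \cref{lem:vincent8}\,(2) blocks it through every later column, whereas you peel off the first letter of~$\pi(c)$ via \cref{lem:coxeterElementFacts} and induct, using only \cref{lem:vincent8}\,(1). Your endgame for the two ill-state subcases is correct and neatly replaces the paper's appeal to \cref{lem:vincent8}\,(2). However, the inductive setup has a genuine gap in the treatment of the non-descent prefix of~$c$. Iterating \cref{lem:coxeterElementFacts} past the letters $s_{\ell_1},\dots,s_{\ell_{r-1}}$ of~$c$ that precede~$s_m$ and are not left descents of~$\pi$ produces the word $\tilde c = s_{\ell_{r+1}}\cdots s_{\ell_{n-1}}\cdot s_{\ell_1}\cdots s_{\ell_{r-1}}\cdot s_m$: the \emph{whole} prefix is cycled to the end, not just~$s_m$. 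For this word your ``crucial combinatorial observation'' is false. Take $n=4$, $c = s_3\cdot s_1\cdot s_2$ (so $U_c=\{3\}$, $D_c=\{2\}$) and $\pi = 2314 = s_1\cdot s_2$, which is $c$-sortable with $s_3$ not a left descent and $m=1$; then $\tilde c = s_2\cdot s_3\cdot s_1$ has $3\in D_{\tilde c}$ although $3\in U_c$ and $3\notin\{m,m+1\}$, so the induction hypothesis applied to~$(\pi',\tilde c)$ hands you acceptance by~$\automatonD(3)$ when you need acceptance by~$\automatonU(3)$. If instead you insist that~$c^{(m)}$ is literally~$c$ with only~$s_m$ moved to the end (for which the observation does hold), then the identity $\pi(c)=s_m\cdot\pi'(c^{(m)})$ no longer follows from \cref{lem:coxeterElementFacts} alone, since that lemma produces~$\tilde c$, not~$c^{(m)}$.

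The missing ingredient is the standard fact, which the paper itself invokes in the proof of \cref{lem:Vincent10}, that a $c$-sortable~$\pi$ whose $c$-sorting word skips an initial non-descent letter~$s_{\ell_i}$ lies in the parabolic subgroup~$W_{\langle s_{\ell_i}\rangle}$; hence $s_{\ell_1},\dots,s_{\ell_{r-1}}$ occur in no reduced word for~$\pi$ or~$\pi'$. This is what reconciles~$\tilde c$ with your~$c^{(m)}$ (the two words differ only in the placement of letters invisible to~$\pi'$) and what lets you dispose of the indices~$j$ for which $s_{j-1}$ or~$s_j$ is a skipped letter; without stating and using it, the induction does not close. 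A cleaner repair is to strip one letter of~$c$ at a time, as the paper does for \cref{lem:Vincent10,lem:Vincent11}: when the initial letter is a left descent your analysis applies verbatim with $r=1$ and only $\{m,m+1\}$ change sides; when it is not, pass to the parabolic subgroup before continuing.
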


\begin{proof}
Consider~$j \in U_c$ (the proof for~$j \in D_c$ is symmetric).
We distinguish two possible cases:
\begin{itemize}
	\item If $\pi(c)$ contains no $s_{j-1}$, then $\pi(c)$ either remains in the first healthy state or ends in the first ill state of~$\automatonU(j)$.
	\item If $\pi(c)$ contains $s_{j-1}$, then by \cref{lem:vincent8}\,(1) $s_{j-1}$ appears before $s_j$ in $\pi(c)$ and $\pi(c)$ leads to the second healthy state of~$\automatonU(j)$. From here on out, notice that $\pi(c)$ cannot end at a dead state because of \cref{lem:vincent8}\,(2). 
\end{itemize}
In both cases, $\pi(c)$ is accepted by~$\automatonU(j)$.
\end{proof}

\begin{lemma}\label{lem:Vincent10}
A permutation $\pi \in \fS_n$ whose $c$-sorting word $\pi(c)$ is accepted by~$\automatonP(U_c, D_c)$ is $c$-sortable.
\end{lemma}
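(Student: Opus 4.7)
My plan is to prove the lemma by induction on $\ell(\pi)$, with the base case $\pi = \mathrm{id}$ being trivial. For the inductive step I write $c = s_\ell d$ and split into two cases according to \cref{lem:coxeterElementFacts}, depending on whether $s_\ell$ is a left descent of $\pi$.

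In the main case where $s_\ell$ is a left descent of $\pi$, we have $\pi = s_\ell \tau$ and $\pi(c) = s_\ell \cdot \tau(d s_\ell)$. The key step is to show that $\tau(d s_\ell)$ is accepted by $\automatonP(U_{d s_\ell}, D_{d s_\ell})$. The rotation $c \mapsto d s_\ell$ only changes the $U$/$D$-membership at the indices $\ell$ (which moves from $U_c$ to $D_{d s_\ell}$) and $\ell + 1$ (which moves from $D_c$ to $U_{d s_\ell}$); for every other $j$, $s_\ell$ loops in $\automatonU(j)$ and $\automatonD(j)$, so acceptance transfers directly. At the two swapped indices one exploits the recursive definitions in \cref{fig:automataRecursive}: reading $s_\ell$ from the initial state of $\automatonU(\ell)$ arrives at the initial state of $\automatonU(\ell+1)$, and reading $s_\ell$ from the initial state of $\automatonD(\ell+1)$ arrives at the initial state of $\automatonD(\ell)$, which are precisely the automata installed by the rotation. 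In the degenerate subcases $\ell \in \{1, n-1\}$, where one of $\automatonD(\ell+1)$ or $\automatonU(\ell)$ is missing from $\automatonP(U_c, D_c)$, I would instead invoke \cref{prop:sameStateAcceptedReducedExpressionsRefined}: the descent condition $\pi^{-1}(\ell) > \pi^{-1}(\ell+1)$ yields $\ninv^2(\tau) = 0$ or $\ninv_{n-1}(\tau) = 0$, forcing every reduced expression for $\tau$ --- in particular $\tau(d s_\ell)$ --- to be accepted by the required automaton. The induction hypothesis then gives that $\tau$ is $(d s_\ell)$-sortable, and comparing the decompositions $\pi(c) = \prod_k c_{I_k}$ and $\tau(d s_\ell) = \prod_k (d s_\ell)_{J_k}$ through the explicit identity $J_k = (I_k \setminus \{\ell\}) \cup (\{\ell\} \cap I_{k+1})$ (coming from the one-letter shift between $c^\infty$ and $(d s_\ell)^\infty$) transports nestedness of $J_\bullet$ back to nestedness of $I_\bullet$, so $\pi$ is $c$-sortable.

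When $s_\ell$ is not a left descent of $\pi$, \cref{lem:coxeterElementFacts} gives $\pi(c) = \pi(d s_\ell)$ as words, and $\ell$ lies outside the support of $\pi$ so $s_\ell$ never appears in $\pi(c)$. \cref{prop:sameStateAcceptedReducedExpressionsRefined}, now applied to $\pi$ itself with $\ninv_\ell(\pi) = 0 = \ninv^{\ell+1}(\pi)$, shows that $\pi(c)$ is also accepted by $\automatonD(\ell)$ and by $\automatonU(\ell+1)$, and therefore by $\automatonP(U_{d s_\ell}, D_{d s_\ell})$. Since $\ell(\pi)$ does not decrease in this case, I would run a secondary induction on the position in $c$ of the first letter that is a left descent of $\pi$; each rotation drops this position by one until we fall into the main case. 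The principal obstacle throughout is the careful tracking of the $U$/$D$-swap at the two indices $\ell$ and $\ell+1$ under rotation, and \cref{prop:sameStateAcceptedReducedExpressionsRefined} is the technical tool that plugs all the transfer gaps.
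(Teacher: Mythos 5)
Your overall strategy is sound and, in the descent case, actually more explicit than the paper's: the paper argues by contraposition (if $\pi$ is not $c$-sortable, some automaton rejects $\pi(c)$), whereas you prove the direct implication, but both hinge on the same rotation $c = s_\ell \cdot d \mapsto d \cdot s_\ell$ and the identity $U_{d s_\ell} = U_c \symdif \{\ell,\ell+1\}$. Your tracking of the acceptance transfer at the two swapped indices through the recursive embeddings $\automatonU(\ell) \to \automatonU(\ell+1)$ and $\automatonD(\ell+1) \to \automatonD(\ell)$ is correct, your treatment of the boundary cases $\ell \in \{1, n-1\}$ via \cref{prop:sameStateAcceptedReducedExpressionsRefined} works, and the block identity transporting nestedness of $J_\bullet$ to $I_\bullet$ is a legitimate (if hand-rolled) substitute for the standard recursion for $c$-sortability from \cite{Reading-sortableElements}.

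The non-descent case, however, contains a genuine error. You assert that if $s_\ell$ is not a left descent of $\pi$ then ``$\ell$ lies outside the support of $\pi$,'' and deduce $\ninv_\ell(\pi) = 0 = \ninv^{\ell+1}(\pi)$. This is false: not being a left descent only says that the values $\ell$ and $\ell+1$ are not reversed, not that $\pi$ stabilizes $[\ell]$. For instance $\pi = 4231$ with $\ell = 2$ has $s_2$ not a left descent, yet $2$ is in the support and $\ninv_2(\pi) \ne 0$ because of the inversion $(4,2)$; likewise $\pi = 231$ has $\ninv^3(\pi) \ne 0$ although $s_2$ is not a left descent. So the appeal to \cref{prop:sameStateAcceptedReducedExpressionsRefined} collapses, and with it the transfer of acceptance to $\automatonP(U_{d s_\ell}, D_{d s_\ell})$ --- and, more importantly, the very fact that $\pi \in W_{\langle s_\ell \rangle}$, which Reading's recursion requires before you may conclude $c$-sortability from $d$-sortability. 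The missing step must be \emph{derived from the acceptance hypothesis}, not from the descent condition: if $\ell$ were in the support of $\pi$, then acceptance of $\pi(c)$ by $\automatonU(\ell)$ forces an inversion $(k,\ell)$ with $k > \ell$, acceptance by $\automatonD(\ell+1)$ forces an inversion $(\ell+1, i)$ with $i < \ell+1$, and together with the non-reversal of $\ell$ and $\ell+1$ this produces the subword $k\,\ell\,(\ell+1)\,i$ in $\pi$, contradicting \cref{thm:patternAvoidance}. This is exactly the argument the paper spells out in the proof of \cref{lem:Vincent11}; without it your secondary induction on the position of the first descent letter in $c$ has no valid base for each rotation step.
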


\begin{proof}
Suppose that $\pi$ is not $c$-sortable. We will find an automaton that rejects $\pi(c)$ among the automata~$\automatonU(j)$ for~$j \in U_c$ and~$\automatonD(j)$ for~$j \in D_c$.
Once again, we work by induction on the length of $\pi$ and the size of~$c$.
Let~$s_\ell$ be the first letter of~$c$ and write~$c = s_\ell \cdot d$.
Since~$s_\ell$ appears before~$s_{\ell-1}$ and~$s_{\ell+1}$ in~$c$, we have~$\ell \in U_c$ and~$\ell+1 \in D_c$.
Moreover, the letter~$s_\ell$ yields to the next healthy state in both automata~$\automatonU(\ell)$ and~$\automatonD(\ell+1)$, and remains in the initial state for all other automata~$\automatonU(j)$ for~$j \in U_c \ssm \{\ell\}$ and~$\automatonD(j)$ for~$j \in D_c \ssm \{\ell+1\}$.
We now distinguish two cases, depending on whether~$\ell$ and~$\ell+1$ are reversed in~$\pi$.

Assume first that $\ell$ and $\ell+1$ are reversed in $\pi$ and write $\pi = s_\ell \cdot \tau$.
We then have $\pi(c) = s_\ell \cdot \tau(d \cdot s_\ell)$ by \cref{lem:coxeterElementFacts}, so that~$\tau$ is not $d \cdot s_\ell$-sortable.
By induction hypothesis, $\tau(d \cdot s_\ell)$ is rejected by one of the automata~$\automatonU(j)$ for~$j \in U_{d \cdot s_\ell}$ and~$\automatonD(j)$ for~$j \in D_{d \cdot s_\ell}$.
Since~$U_{d \cdot s_\ell} = U_c \symdif \{\ell, \ell+1\}$ and~$D_{d \cdot s_\ell} = D_c \symdif \{\ell, \ell+1\}$, \cref{lem:vincent2,lem:vincent4} ensure that $\pi(c) = s_\ell \cdot \tau(d \cdot s_\ell)$ is rejected by one of the automata~$\automatonU(j)$ for~$j \in U_c$ and~$\automatonD(j)$ for~$j \in D_c$.

Assume now that $\ell$ and $\ell+1$ are not reversed in $\pi$.
Then~$\pi(c)$ does not use~$s_\ell$ and~$\pi$ is not $d$-sortable in~$W_{\langle s_\ell \rangle}$.
By induction hypothesis, $\pi(c)$ is rejected by one of the automata~$\automatonU(j)$ for~$j \in U_d$ and~$\automatonD(j)$ for~$j \in D_d$.
This concludes the proof since~$U_d \subseteq U_c$ and~$D_d \subseteq D_c$.
\end{proof}

\begin{lemma}\label{lem:Vincent11}
If a permutation~$\pi \in \fS_n$ admits a reduced word accepted by~$\automatonP(U_c, D_c)$, then its $c$-sorting word~$\pi(c)$ is accepted by~$\automatonP(U_c, D_c)$.
\end{lemma}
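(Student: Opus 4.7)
I would prove this lemma by a double induction on the pair $(n, \length(\pi))$ in lexicographic order, the base case $\pi = e$ being immediate since the empty word is accepted by every automaton. Let $s_\ell$ denote the first letter of $c$ and write $c = s_\ell \cdot d$. Following the structure of the proof of \cref{lem:Vincent10}, the argument splits into two cases depending on whether $s_\ell$ is a left descent of $\pi$ (equivalently, whether $\ell$ and $\ell+1$ are reversed in $\pi$).

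In Case~1, where $s_\ell$ is a left descent of $\pi$, I would write $\pi = s_\ell \tau$ with $\length(\tau) = \length(\pi) - 1$ and apply \cref{lem:coxeterElementFacts} to obtain $\pi(c) = s_\ell \cdot \tau(d s_\ell)$. The hypothesis on $\pi$, translated via \cref{thm:patternAvoidance} together with Lemmas~\ref{lem:vincent2}\,(2) and~\ref{lem:vincent4}\,(2), becomes the analogous hypothesis on $\tau$ with respect to $(U_{d s_\ell}, D_{d s_\ell})$, which is obtained from $(U_c, D_c)$ by swapping the membership of $\ell$ and $\ell+1$ between $U$ and $D$. Applying \cref{thm:patternAvoidance} once more and then the inner induction hypothesis yields that $\tau(d s_\ell)$ is accepted by $\automatonP(U_{d s_\ell}, D_{d s_\ell})$. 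The constructive direction of Lemmas~\ref{lem:vincent2}\,(1) and~\ref{lem:vincent4}\,(1) then shows that $s_\ell \cdot \tau(d s_\ell) = \pi(c)$ is accepted by $\automatonP(U_c, D_c)$.

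In Case~2, where $s_\ell$ is not a left descent of $\pi$, the key step is the following claim: $\pi$ lies in the parabolic subgroup $W_{\langle s_\ell \rangle}$, that is $\pi(\{1, \dots, \ell\}) = \{1, \dots, \ell\}$. The claim is proved by contradiction. Assume that some $b > \ell$ occupies a position $\le \ell$ and some $a \le \ell$ occupies a position $> \ell$ in $\pi$; combine the Case~2 inequality $\pi^{-1}(\ell) < \pi^{-1}(\ell+1)$ with the avoidance (guaranteed by the hypothesis together with \cref{thm:patternAvoidance}) of $\ell\,k\,i$ for $\ell \in U_c$ and $k\,i\,(\ell+1)$ for $\ell+1 \in D_c$. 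A short case analysis on the relative positions of $\pi^{-1}(\ell), \pi^{-1}(\ell+1), \pi^{-1}(b)$ then exhibits a forbidden subword of one of the shapes $b\,\ell\,(\ell+1)$, $\ell\,b\,a$, or $\ell\,(\ell+1)\,a$, yielding the required contradiction. The edge cases $\ell \in \{1, n-1\}$ follow from the same analysis with some of these patterns vacuous.

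Once the claim is established, no reduced expression of $\pi$ uses $s_\ell$, so $\pi(c)$ in particular contains no $s_\ell$. This immediately gives acceptance of $\pi(c)$ by $\automatonU(\ell)$ and $\automatonD(\ell+1)$, since the only way to reach a dead state in either of these automata requires an $s_\ell$ transition. For the remaining automata indexed by $U_c \ssm \{\ell\}$ and $D_c \ssm \{\ell+1\}$, view $\pi$ as an element of $W_{\langle s_\ell \rangle} \cong \fS_\ell \times \fS_{n-\ell}$ and identify $\pi(c) = \pi(d s_\ell)$ (from \cref{lem:coxeterElementFacts}) with the $d$-sorting word of $\pi$ in the parabolic (obtained by dropping the positions of $s_\ell$ in $c^\infty$); the outer induction hypothesis on $n$ applied to each factor then provides the required acceptance. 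Combining everything yields that $\pi(c)$ is accepted by $\automatonP(U_c, D_c)$. The main obstacle here is the Case~2 claim: the case analysis must simultaneously track the relative positions of four values and pick the appropriate forbidden subword in each subcase, and some care is needed to correctly match the indices of $U_c, D_c$ with the corresponding sets in the two parabolic factors.
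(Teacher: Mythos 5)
Your proof is correct and follows essentially the same strategy as the paper's: induction on the length of~$\pi$, the case split on whether~$s_\ell$ is a left descent, the use of \cref{lem:coxeterElementFacts} together with \cref{lem:vincent2,lem:vincent4} to pass to~$\tau$ and~$(U_{d\cdot s_\ell}, D_{d\cdot s_\ell})$ in the descent case, and the reduction to the parabolic subgroup~$W_{\langle s_\ell \rangle}$ in the non-descent case. The only substantive difference is local: to prove the Case~2 claim that~$\pi$ lies in~$W_{\langle s_\ell \rangle}$, the paper traces the accepted reduced expressions through~$\automatonU(\ell)$ and~$\automatonD(\ell+1)$ to exhibit the forbidden subword~$k\,\ell\,(\ell+1)\,i$, whereas you extract an equivalent forbidden subword directly from the pattern-avoidance characterization of \cref{thm:patternAvoidance} by a case analysis on positions --- both routes reach the same contradiction, and your explicit double induction on~$(n,\length(\pi))$ merely makes precise the parabolic descent that the paper leaves implicit.
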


\begin{proof}
Once agin, we work by induction on the length of $\pi$.
Let~$s_\ell$ be the first letter of~$c$ and write~$c = s_\ell \cdot d$.
Since~$s_\ell$ appears before~$s_{\ell-1}$ and~$s_{\ell+1}$ in~$c$, we have~$\ell \in U_c$ and~$\ell+1 \in D_c$.
Moreover, the letter~$s_\ell$ yields to the next healthy state in both automata~$\automatonU(\ell)$ and~$\automatonD(\ell+1)$, and remains in the initial state for all other automata~$\automatonU(j)$ for~$j \in U_c \ssm \{\ell\}$ and~$\automatonD(j)$ for~$j \in D_c \ssm \{\ell+1\}$.
We now distinguish two cases, depending on whether~$\ell$ and~$\ell+1$ are reversed in~$\pi$.

Assume first that $\ell$ and $\ell+1$ are reversed in $\pi$ and write $\pi = s_\ell \cdot \tau$. Using \cref{lem:coxeterElementFacts} it suffices now to show that after $s_\ell$, there is a reduced word for $\tau$ accepted by the automata. For each $j$, since the automaton $\automatonD(j)$ or $\automatonU(j)$ that we see accepts at least one reduced word for $\pi$ and $s_\ell$ does not lead to a ill state, it also accepts a reduced word for $\pi$ starting with $s_\ell$ by \cref{prop:algorithm}. Observe moreover that:
\begin{itemize}
\item $\tau$ admits a reduced word accepted by~$\automatonU(j)$ for each~$j \notin \{\ell, \ell+1\}$ by \cref{lem:vincent2}. This lines up with the fact that the order of $s_{j-1}$ and $s_j$ has not changed from $c = s_\ell \cdot d$ to~$d \cdot s_\ell$.
\item $\tau$ admits a reduced word accepted by~$\automatonU(\ell+1)$ and a reduced word accepted by~$\automatonD(\ell)$ by \cref{lem:vincent4}. This fits the fact that $\ell$ now appears after $\ell-1$ and $\ell+1$ in $d \cdot s_\ell$.
\end{itemize}
By induction, we obtain that $\tau(d \cdot s_\ell)$ is accepted by~$\automatonP(U_{d \cdot s_\ell}, D_{d \cdot s_\ell})$, so that $\pi(c) = s_\ell \cdot \tau(d \cdot s_\ell)$ is accepted~$\automatonP(U_c, D_c)$.
	
Assume now that $\ell$ and $\ell+1$ are not reversed in $\pi$. We want to show that $s_\ell$ never appears in the reduced words for $\pi$, \ie that $\pi([\ell]) = [\ell]$ and $\pi([n] \ssm [\ell]) = [n] \ssm [\ell]$. Otherwise, the reduced word $w_\ell$ accepted by $\automatonU(\ell)$ would see first a $s_{\ell+1}$, and then a $s_\ell$ before it sees any $s_{\ell-1}$, so that we would have an inversion $k \ell$ in $\pi$ for some $\ell < k$. Similarly, the reduced word $w_{\ell+1}$ accepted by $\automatonD(\ell+1)$ should see first a $s_{\ell-1}$ and then a $s_\ell$ before it sees any $s_{\ell+1}$, so that we would have an inversion $(\ell+1) i$ in $\pi$ for some $i < \ell+1$. Since $\ell$ and $\ell+1$ are not reversed, we see $k \ell (\ell+1) i$ which contradicts twice \cref{thm:patternAvoidance}. We conclude that this case never happens, so that we can work in the parabolic subgroup of permutations that never use $s_\ell$ in their reduced words.
\end{proof}

\subsection{Some negative observations}

We conclude this paper with some negative observations and warnings about the connection between $c$-sorting and $(U_c,D_c)$-permutree sorting.
First, we want to underline that using $c$-sorting words to test whether a permutation avoids~$jki$ or $kij$ for a fixed~$j$ is dangerous for the following two reasons.

\begin{remark}
Even if a permutation~$\pi$ avoids~$jki$ (resp.~$kij$) for a given~$j$, there might be no Coxeter element~$c$ for which $\pi$ is $c$-sortable and~$j \in U_c$ (resp.~$j \in D_c$).
For instance, the permutation $41325 \in \fS_5$ avoids $2ki$ and~$ki4$, but contains~$352$ and~$413$, so it is not $c$-sortable for any Coxeter element~$c$.
\end{remark}

\begin{remark}
When a permutation~$\pi$ is not $c$-sortable, there might exist~$j \in U_c$ (resp.~$j \in D_c$) for which the $c$-sorting word~$\pi(c)$ is not accepted by~$\automatonU(j)$ (resp.~$\automatonD(j)$) even if~$\pi$ avoids~$jki$ (resp.~$kij$).
For instance, consider~$c = s_2 \cdot s_1 \cdot s_3$ and~$\pi = 4213 = s_3 \cdot s_1 \cdot s_2 \cdot s_1 =  s_3 \cdot s_2 \cdot s_1 \cdot s_2 =  s_1 \cdot s_3 \cdot s_2 \cdot s_1$.
Then~$2 \in U_c$, and the $c$-sorting word~$\pi(c) = s_1 \cdot s_3 \cdot s_2 \cdot s_1$ is rejected by~$\automatonU(2)$ while~$\pi$ contains no~$2ki$ (and indeed~$s_3 \cdot s_2 \cdot s_1 \cdot s_2$ is accepted by~$\automatonU(2)$).
\end{remark}

We conclude the paper with an observation about sorting networks and permutree sorting.

\begin{remark}
Given a Coxeter element $c$, the word $c^\infty$ which is used to compute $\pi(c)$ is a \defn{sorting network}. This means that we decide \emph{beforehand} a list of transpositions to apply if appropriate. On the other hand, the permutree sorting given in \cref{algo:permutreeSorting} is \emph{not} a sorting network. Indeed, the order on transpositions depends on the permutation and more specifically on the state of the automaton we are at. A natural question then occurs: can we replace the permutree sorting algorithm by a sorting network? Or said differently, when~$U$ and~$D$ are disjoint but do not cover~$\{2, \dots, n-1\}$, can we find a word $\tilde{c}$ which plays the role of $c^\infty$ in the sense that looking at $\pi(\tilde{c})$ would be enough to check whether $\pi$ is accepted by $\automatonP(U,D)$?

The answer is negative in general. A counter-example is found for $n = 5$, $U = \{ 2\}$, and $D=\{ 4 \}$. In this case one can check through computer exploration that no reduced word $\tilde{c}$ of the maximal permutation $54321$ can be used as a sorting network. Namely, for all choices of $\tilde{c}$, there exist a permutation $\pi$ which is accepted by $\automatonP(U,D)$ whereas the reduced word $\pi(\tilde{c})$ is rejected. The healthy states of $\automatonP(\{ 2\},\{ 4 \})$ are shown in \cref{fig:automataProduct}. We see that accepted reduced words can start with either $s_2$ or $s_3$. For some permutations, such as $54213$ shown in \cref{exm:algo2}, all accepted reduced words start with $s_3$ whereas for some other permutations such as $35421$, all accepted reduced words start with $s_2$. This eventually leads to an empty intersection for the choice of $\tilde{c}$.

Nevertheless, it seems interesting to study in which case the answer is positive. The Cambrien case with the $c$-sorting word when $U$ and~$D$ form a partition of~$\{2, \dots, n-1\}$ is one example. The case where $|U| + |D| = 1$ is another one. This is the case corresponding to \cref{thm:patternAvoidance} where we have only one automaton. In this case, we can construct a word $\tilde{c}$ by reading the healthy states of the automaton linearly, adding at each state the word $(s_{i_1} \cdots s_{i_k})^k s_j$ where $s_{i_1}, \dots, s_{i_k}$ are the looping transitions and $s_j$ is the transition going to the next healthy state. This process gives a prefix that can be extended in any way to obtain a proper sorting word $\tilde{c}$. For example, if $U = \{2\}$, we obtain the prefix $s_3 \cdot s_2 \cdot s_1 \cdot s_3$ and indeed $s_3 \cdot s_2 \cdot s_1 \cdot s_3 \cdot s_2 \cdot s_1$ acts as a sorting network equivalent to the $(\{2\},\varnothing)$-permutree sorting. This process actually seems to extend to all cases where, at each healthy state of the intersection automaton, the choices for the healthy transitions commute. For example, in the case where $n=5$, $U = \{ 4 \}$ and $D = \{ 2 \}$ as illustrated in \cref{fig:automataProduct}, the word $s_1 \cdot s_2 \cdot s_4 \cdot s_3 \cdot s_2 \cdot s_1 \cdot s_4 \cdot s_3 \cdot s_2$ gives a proper sorting network. 
\end{remark}


\bibliographystyle{alpha}
\bibliography{permutreeSortablePermutations}
\newpage

\end{document}